\newtheorem{theorem}{Theorem}[section]
\newtheorem{lemma}[theorem]{Lemma}
\newtheorem{proposition}[theorem]{Proposition}
\newtheorem{corollary}[theorem]{Corollary}
\newtheorem{main}{Theorem} 
\theoremstyle{definition}
\newtheorem{definition}[theorem]{Definition}
\theoremstyle{remark}
\newtheorem{remark}[theorem]{Remark}
\numberwithin{equation}{section}
\newcommand{\N}{\ensuremath{\mathbb{N}}}
\newcommand{\B}{\ensuremath{\mathcal{B}}}
\renewcommand{\a}{ {\mathbf{a}}}
\renewcommand{\d}{ {\mathbf{d}}}
\renewcommand{\t}{ {\mathbf{t}}}
\newcommand{\s}{ {\mathbf{s}}}
\newcommand{\E}{\mathcal{E}}
\newcommand{\set}[1]{\left\{#1\right\}}
\newcommand{\ga}{\gamma}
\newcommand{\f}{\infty}
\newcommand{\de}{\delta}
\newcommand{\al}{\alpha}
\newcommand{\lle}{\preccurlyeq}
\newcommand{\lge}{\succcurlyeq}
\renewcommand{\a}{ \mathbf{a}}
\newcommand{\si}{\sigma}
\newcommand{\ra}{\rightarrow}
\newcommand{\K}{\mathcal K}
\begin{document}

\title{The $\beta$-transformation with a hole at 0}
\author{Charlene Kalle}
\address{Charlene Kalle: Mathematical Institute, University of Leiden, PO Box 9512, 2300 RA Leiden, The Netherlands}
\email{kallecccj@math.leidenuniv.nl}

\author{Derong Kong}
\address{Derong Kong: Mathematical Institute, University of Leiden, PO Box 9512, 2300 RA Leiden, The Netherlands}
\email{d.kong@math.leidenuniv.nl}

\author{Niels Langeveld}
\address{Niels Langeveld: Mathematical Institute, University of Leiden, PO Box 9512, 2300 RA Leiden, The Netherlands}
\email{n.d.s.langeveld@math.leidenuniv.nl]}

\author{Wenxia Li}
\address{Wenxia Li: Department of Mathematics, Shanghai Key Laboratory of PMMP, East China Normal University, Shanghai 200062,
People's Republic of China}
\email{wxli@math.ecnu.edu.cn}
\date{\today}
\dedicatory{}

\begin{abstract}
For $\beta\in(1,2]$ the $\beta$-transformation $T_\beta: [0,1) \to [0,1)$ is defined by $T_\beta ( x) = \beta x \pmod 1$. For $t\in[0, 1)$ let $K_\beta(t)$ be the survivor set of $T_\beta$ with hole $(0,t)$ given by 
\[
K_\beta(t):=\set{x\in[0, 1): T_\beta^n(x)\not \in (0, t) \textrm{ for all }n\ge 0}. 
\]  
 In this paper we  characterise the   bifurcation set $E_\beta$ of all parameters   $t\in[0,1)$ for which  the set valued   function  $t\mapsto K_\beta(t)$ is not locally constant. We show that $E_\beta$ is a Lebesgue null set of full Hausdorff dimension for all $\beta\in(1,2)$. We prove that for Lebesgue almost every $\beta\in(1,2)$ the bifurcation set $E_\beta$ contains both infinitely many isolated and accumulation points arbitrarily close to zero. On the other hand, we show that the set of $\beta\in(1,2)$ for which $E_\beta$ contains  no isolated points has zero Hausdorff dimension. These results contrast with the situation for $E_2$, the bifurcation set of the doubling map. Finally, we give for each $\beta \in (1,2)$ a lower and upper bound for the value $\tau_\beta$, such that the Hausdorff dimension of $K_\beta(t)$ is positive if and only if $t< \tau_\beta$. We show that $\tau_\beta \le 1-\frac1{\beta}$ for all $\beta \in (1,2)$.
 \end{abstract}
\keywords{$\beta$-transformation; Hausdorff dimension; Devil's staircase; critical points; Lyndon words; Farey words.}
\subjclass[2010]{11K55, 11A63, 68R15,  26A30, 28D05, 37B10, 37E05, 37E15}
\maketitle
%\tableofcontents

\section{Introduction}\label{sec:1}
In recent years open dynamical systems, i.e., systems with a hole in the state space through which mass can leak away at every iteration, have received a lot of attention. Typically one wonders about the rate at which mass leaves the system and about the size and structure of the set of points that remain, called the {\em survivor set}. In \cite{Urb86,Urb87} Urba\'nski considered $C^2$-expanding, orientation preserving circle maps with a hole of the form $(0,t)$. He studied the way in which the topological entropy of such a map restricted to the survivor set changes with $t$. To be more precise, let $g$ be a $C^2$-expanding and orientation preserving map on the circle $\mathbb R/\mathbb Z\sim [0, 1)$.   For $t\in[0, 1)$, let $K_g(t)$ be the survivor set defined by 
\[
K_g(t):=\set{x\in[0, 1): g^n(x)\notin(0, t)\textrm{ for all }n\ge 0}. 
\]
Urba\'nski proved that the function $t \mapsto h_{top}(g| K_g(t))$ is a Devil's staircase, where  $h_{top}$ denotes the topological entropy.

\vskip .2cm
Motivated by the work of Urba\'nski, we consider this situation for the $\beta$-transformation. Given $\beta\in(1,2]$, the $\beta$-transformation $T_\beta:[0,1) \to [0,1)$ is defined by $T_\beta( x) = \beta x\pmod 1$. When $\beta=2$, we recover the doubling map. In correspondence with \cite{Urb86}, set
\begin{equation}\label{eq:11}
K_\beta(t):= \{x\in [0,1) : T_\beta^n(x) \not \in (0, t) \text{ for all } n\geq0\}.
\end{equation}
The survivor set $K_\beta(t)$ splits naturally into two pieces, $K_\beta(t) = K_\beta^0(t) \cup K_\beta^+(t)$, where
\begin{equation}\label{q:K+0} \begin{split}
K_\beta^0(t) = & \{ x \in [0,1) \, : \, \exists n \ T^n_\beta(x)=0 \, \text{ and } \ T^k_\beta(x) \not \in (0,t) \text{ for all } 0 \le k < n\},\\
K_\beta^+(t) = & \{ x \in [0,1) \, : \, T^n_\beta(x) \ge t \, \text{ for all } n \ge 0 \}.
\end{split}\end{equation}
The set $K_\beta^+(t)$ occurs in Diophantine approximation. Indeed, consider the set 
\[
F_\beta(t):=\set{x\in[0, 1)\, \Big| \, T_\beta^n(x)\ge t \textrm{ for all but finitely many }n\in\mathbb N}
\]
of points $x \ge t$, such that 0 is badly approximable by its orbit under $T_\beta$. Then  $F_\beta(t)$ can be written as a countable union of affine copies of $K_\beta^+(t)$. Thus, $\dim_H F_\beta(t)=\dim_H K_\beta^+(t) $ for all $t\in[0,1)$. The approximation  properties of $\beta$-expansions have been studied by several authors. In \cite{LPWW14} the authors considered the Hausdorff dimension of the set of values $\beta>1$ for which the orbit of 1 approaches a given target value $x_0$ at a given speed. This work generalised that of \cite{PS08}, where $x_0=0$ and the speed is fixed. Other results on the Diophantine approximation properties of $\beta$-expansions can be found in \cite{Nil09,BW14,Cao14,GL15,LW16} among others.

\vskip .2cm
Further on  we show that the set valued map $\epsilon\mapsto K_\beta(\epsilon)$ is locally constant almost everywhere, i.e., for almost all $t\in[0, 1)$ there exists a $\de>0$ such that $K_\beta(\epsilon)=K_\beta(t)$ for all $\epsilon\in[t-\de, t+\de]$. Such a result was also obtained by Urba\'nski in \cite{Urb86} for $C^2$-expanding circle maps. This fact motivates the study of the \emph{right set valued bifurcation set} (simply called \emph{bifurcation set}) $E_\beta$ containing all parameters $t\in[0, 1)$ such that  the set valued map  $\epsilon\mapsto  K_\beta(\epsilon) $ is not locally constant on any right-sided neighbourhood of $t$,  i.e., 
\begin{equation}\label{q:ebeta}
E_\beta:=\set{t\in[0, 1): K_\beta(\epsilon)\ne K_\beta(t)\textrm{ for any }\epsilon> t}.
\end{equation}

\noindent The local structure of the sets $K_2(t)$ and $E_2$ was investigated in detail in \cite{Urb86,Nil09,CT}, yielding the following results.

\begin{theorem}[Urba\'nski \cite{Urb86} and Nilsson \cite{Nil09}]\label{th:12}\mbox{}
\begin{enumerate}
\item   The  bifurcation set  $E_2$    is a Lebesgue null  set of full Hausdorff dimension. \label{item3} 
\item The function $\eta_2: t\mapsto \dim_H K_2(t)$ is a Devil's staircase: 
\begin{itemize}
\item $\eta_2$ is decreasing and continuous on  $[0, \frac{1}{2}]$;
\item  $\eta_2'(t)=0$ for Lebesgue almost every $t\in[0, \frac{1}{2}]$; 
\item $\eta_2(0)=1$ and $\eta_2(\frac{1}{2})=0$. 
\end{itemize}\label{item2}
\item The topological closure $\overline{E_2}$ is a Cantor set. \label{item4}
\item $\eta_2(t)>0$ if and only if $t<\frac{1}{2}$. 
 
\end{enumerate}
\end{theorem}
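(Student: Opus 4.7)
The plan is to pass entirely to the symbolic side. Every $x\in[0,1)$ has a greedy binary expansion $\mathbf{a}=(a_i)\in\set{0,1}^\N$ with $\pi(\mathbf{a}):=\sum_i a_i 2^{-i}=x$, and $T_2$ is conjugate (modulo the countable set of dyadic rationals) to the left shift $\si$. A sequence $\mathbf{a}$ represents a point of $K_2(t)$ if and only if, for every $n\ge 0$, either $\si^n\mathbf{a}=0^\f$ or $\pi(\si^n\mathbf{a})\ge t$; the latter is equivalent to $\si^n\mathbf{a}\lge\al(t)$ in lexicographic order, where $\al(t)$ is the (quasi-)greedy expansion of $t$. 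From this description one sees that $K_2(t)$ changes only when $t$ crosses a value whose own expansion $\al(t)$ is \emph{shift-maximal} (meaning $\si^n\al(t)\lle\al(t)$ for all $n$), and I would prove that $E_2$ is precisely the set of such $t$. Consequently $\overline{E_2}$ is contained in the closed Cantor-like set of shift-maximal sequences, which immediately gives Lebesgue measure zero.

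For part (4), if $t\ge\tfrac12$ and $\mathbf{a}$ is admissible at $t$ with $a_n=0$, then $\pi(\si^{n-1}\mathbf{a})<\tfrac12\le t$ forces $\si^{n-1}\mathbf{a}=0^\f$; hence admissible sequences are $1^k0^\f$ and $K_2(t)$ is finite. For $t<\tfrac12$ choose $k$ with $2^{-k-1}\le\tfrac12-t$; then every sequence $1^k b_1 1^k b_2 1^k b_3\cdots$ with $b_i\in\set{0,1}$ is admissible at $t$, because each of its shifts either starts with a $1$ (value $\ge\tfrac12>t$) or has the form $b_j 1^k\cdots$ whose value exceeds $\tfrac12-2^{-k-1}\ge t$. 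This Bernoulli subfactor has topological entropy $\tfrac{\log 2}{k+1}>0$, so $\eta_2(t)>0$.

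For part (2), monotonicity and the boundary values $\eta_2(0)=1$, $\eta_2(\tfrac12)=0$ are immediate from $K_2(0)=[0,1)$ and the computation just given. Continuity of $\eta_2$ I would establish by approximating the admissible subshift at $t$ from outside by subshifts of finite type obtained by forbidding longer and longer initial blocks of $\al(t)$, using that for subshifts of $\set{0,1}^\N$ projected to the line one has $\dim_H K_2(t)=h_{\textrm{top}}/\log 2$. Almost-everywhere vanishing of $\eta_2'$ then follows from part (1) together with the fact that on each component of $[0,1)\setminus\overline{E_2}$ the set valued map $t\mapsto K_2(t)$ is constant, so $\eta_2$ is constant there.

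For the remaining content of parts (1) and (3), full Hausdorff dimension of $E_2$ follows from a Moran-type construction: for each large $n$ exhibit a self-similar Cantor subset of $E_2$ built by freely concatenating a rich family of Lyndon words of length $n$, whose dimension tends to $1$ as $n\to\f$. The Cantor property of $\overline{E_2}$ breaks into perfectness and total disconnectedness. Perfectness comes from approximating any shift-maximal $\al(t)$ on both sides inside $E_2$: perturb the far-out tail with other shift-maximal blocks to produce a sequence of nearby shift-maximal sequences. Total disconnectedness comes from the fact that each component of $[0,1)\setminus\overline{E_2}$ is a gap in $\overline{E_2}$, and these gaps have full Lebesgue measure by part (1), hence are dense. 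The main obstacle throughout is the continuity of $\eta_2$ at the critical values $t\in E_2$, which requires a uniform entropy control as the admissible subshift degenerates; once the symbolic coding is in place, the remaining combinatorial work on shift-maximal and Lyndon words is classical, tracing back to Parry and Milnor--Thurston.
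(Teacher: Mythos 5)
The paper does not prove Theorem~\ref{th:12}; it quotes it from \cite{Urb86,Nil09}, and its own generalisations (Theorems~\ref{main:1} and \ref{main:4}) recover items \emph{(1)}, \emph{(2)} and \emph{(4)} for all $\beta\in(1,2]$ by importing the continuity of $\eta_\beta$ from Raith \cite{Rai94} and using the self-similar sets $E_{\beta,N}=\beta^{-N}K_\beta^+(\beta^{-N})$ for full dimension. Your symbolic strategy is the standard one and your treatment of item \emph{(4)} is essentially correct, but the characterisation on which your items \emph{(1)} and \emph{(3)} rest has the inequality reversed. Since $t\in E_2$ iff $t\in K_2(t)$, i.e.\ iff $T_2^n(t)\ge t$ (or $=0$) for all $n$, the symbolic condition is $\sigma^n\alpha(t)\succcurlyeq\alpha(t)$: the expansion of $t$ must be lexicographically \emph{smallest} in its shift orbit, not largest as you state. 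With your condition $\sigma^n\alpha(t)\preccurlyeq\alpha(t)$ the claim is false: $1/3=\pi_2((01)^\infty)$ belongs to $E_2$ although $(01)^\infty$ is not shift-maximal, while $(10)^\infty$ is shift-maximal but $T_2(2/3)=1/3\in(0,2/3)$, so $2/3\notin E_2$. The measure-zero and closedness conclusions survive once the direction is fixed, but as written the key lemma is wrong. Relatedly, your Moran construction for full dimension fails as described: freely concatenating Lyndon words does not yield shift-minimal sequences (already $001\cdot 0001\cdots$ has a shift strictly below itself). One needs words sharing a common prefix $0^N$ and containing no longer run of zeros, which is exactly the set $E_{2,N}$ used in Proposition~\ref{prop:24}.

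The two genuinely hard steps are left open. Your outer approximation of $\mathcal K_2(t)$ by the subshifts of finite type obtained from truncations of $b(t,2)$, combined with monotonicity, gives only one-sided (left) continuity of $\eta_2$; the substantive content of Urba\'nski's theorem is that no dimension is lost as the hole opens slightly past $t$, and no mechanism for this is offered beyond naming it ``the main obstacle.'' Likewise, perfectness of $\overline{E_2}$ is asserted via ``perturbing the tail,'' but Theorems~\ref{main:2} and \ref{main:3} show this is precisely the delicate point: for Lebesgue-a.e.\ $\beta<2$ such perturbations fail and $E_\beta^+$ has isolated points. For $\beta=2$ one needs the argument of Propositions~\ref{prop:36} and \ref{prop:312}: an isolated point of $E_\beta^+$ has a periodic expansion $(s_1\ldots s_m)^\infty$ and is isolated only when $\beta$ lies in the basic interval $(\beta_L,\beta_R]$ attached to $s_1\ldots s_m$, and $\beta=2$ lies in no such interval because $\alpha(\beta_R)\neq 1^\infty$. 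Without these ingredients, the full-dimension part of \emph{(1)}, the continuity in \emph{(2)}, and item \emph{(3)} are not established.
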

Recently,   Carminati and Tiozzo considered  in  \cite{CT}  the local  H\"older exponent of $\eta_2$. They showed that the local H\"{o}lder exponent of $\eta_2$ at any point $t\in E_2$ is equal to   $\eta_2(t)$. Other results on the size and shape of survivor sets for the doubling map $T_2$ with different holes can be found in e.g.~\cite{Alc14, BY11,Det13,GS15,Sid14}. 

 \begin{figure}[h!]\centering
\includegraphics[height=5cm]{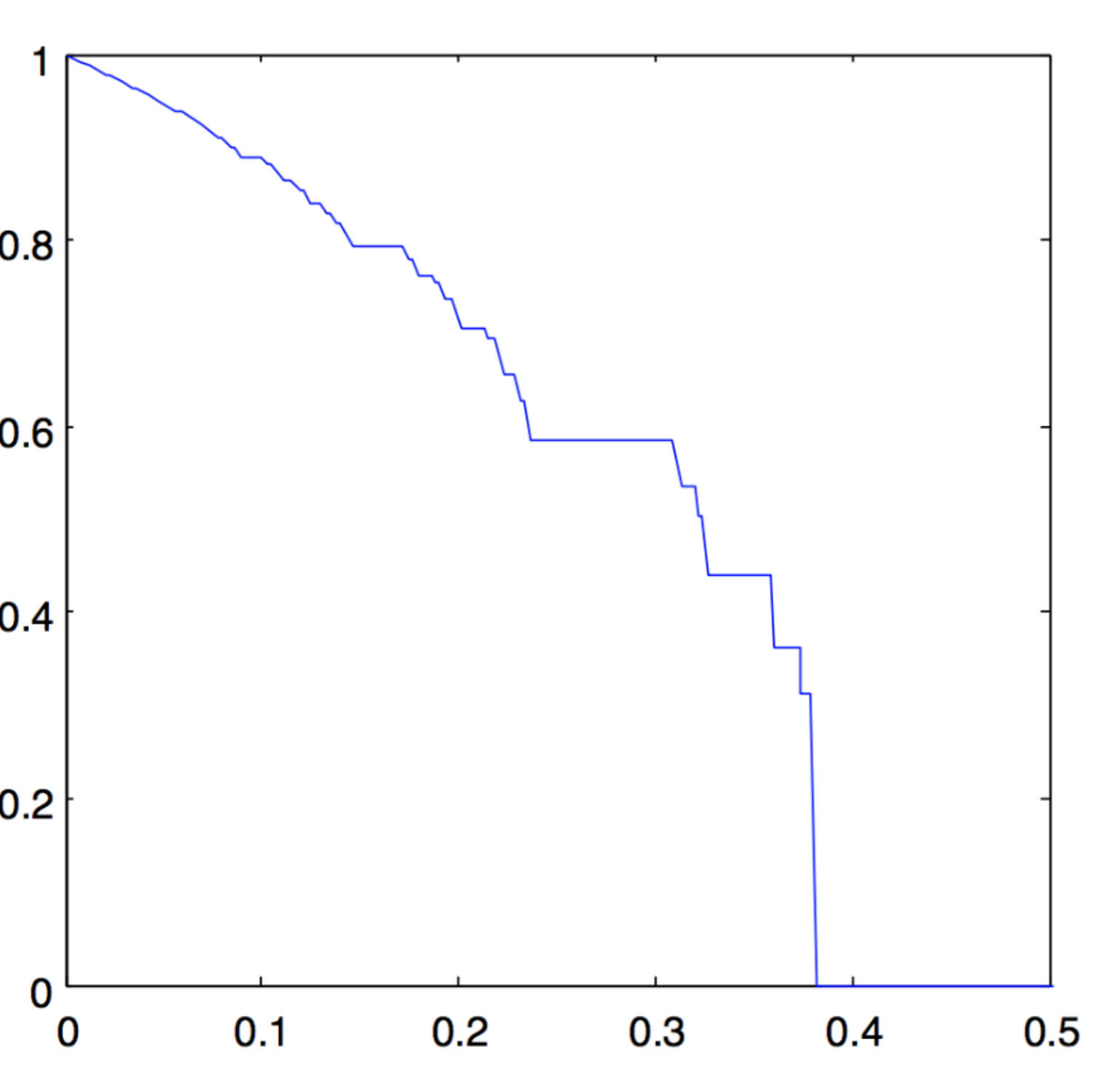}
\hspace{1.5cm}
\includegraphics[height=5cm]{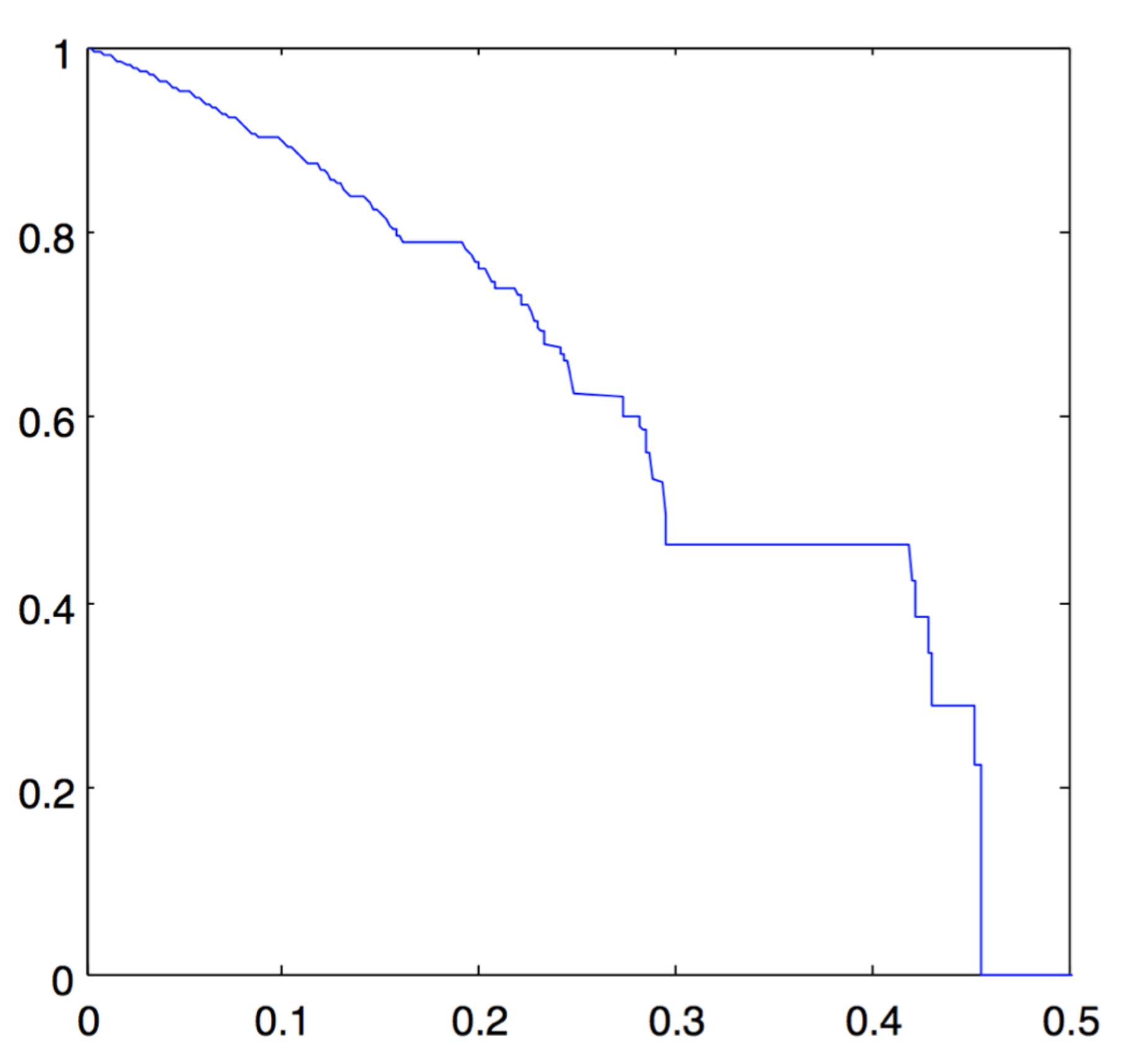}
\caption{Left: the numerical plot of $\eta_\beta$ with $\beta\approx 1.61803$ the golden ratio. Right: the numerical plot  of $\eta_\beta$ with $\beta\approx 1.83929$ the tribonacci number.}\label{fig:1}
\end{figure}

\vskip .2cm
An important ingredient for the proofs in \cite{Urb86,CT} is the fact that
\[ E_2 = \{ t \in [0,1) \, : \, T_2^n(t) \ge t \, \text{ for all } n \ge 0 \}.\]
This identity does not hold in general for $1 < \beta <2$. Therefore, we define $E_\beta^+$ by
\begin{equation}\label{q:e+}
E_\beta^+ := \{ t \in [0,1) \, : \, T_\beta^n(t) \ge t \, \text{ for all } n \ge 0 \}.
\end{equation}
In this paper we consider the survivor set $K_\beta(t)$ and the bifurcation set $E_\beta$ for $\beta\in(1, 2)$. We give a detailed description of the topological structure of $E_\beta$ and $E_\beta^+$ and their dependence on $\beta$. Theorems \ref{main:1} to \ref{main:4} below list our main results. Our first result strengthens \textit{(\ref{item2})} and \textit{(\ref{item3})} of Theorem \ref{th:12}.

\begin{main}\label{main:1}
Let $\beta\in(1, 2]$ and $t\in[0,1)$.
\begin{enumerate}
\item The bifurcation sets $E_\beta$ and $E_\beta^+$ are Lebesgue null sets of full Hausdorff dimension. 
\item The dimension function $\eta_\beta: t\mapsto \dim_H K_\beta(t)$ is a Devil's staircase:
\begin{itemize}
\item $\eta_\beta(0)=1$ and $\eta_\beta(\frac{1}{\beta})=0$;
\item $\eta_\beta$ is decreasing and continuous in $[0, \frac{1}{\beta}]$;
\item $\eta_\beta'=0$ Lebesgue almost everywhere in $[0, \frac{1}{\beta}]$.
\end{itemize}
\end{enumerate}
\end{main}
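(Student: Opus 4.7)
The plan is to prove part (1) first, and then derive the Devil's staircase property of $\eta_\beta$ as a consequence. I would begin by giving a symbolic characterization: by the very definition of $E_\beta^+$, a parameter $t$ belongs to $E_\beta^+$ exactly when its greedy $\beta$-expansion is shift-maximal. A routine check shows $E_\beta^+ \subseteq E_\beta$ (every $t \in E_\beta^+$ satisfies $t \in K_\beta(t) \setminus K_\beta(\epsilon)$ for each $\epsilon > t$), while the difference $E_\beta \setminus E_\beta^+$ consists of parameters $t$ for which some iterated preimage of $t$ has orbit avoiding $(0,t)$; this difference turns out to contribute nothing to either Lebesgue measure or Hausdorff dimension, so the rest of the argument can focus on $E_\beta^+$.

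To show $E_\beta^+$ has Lebesgue measure zero, I would apply the Birkhoff ergodic theorem to the Parry (absolutely continuous invariant) measure for $T_\beta$: Lebesgue-a.e.\ $t$ has a $T_\beta$-orbit equidistributed with respect to Parry measure, hence enters any neighbourhood of $0$, so eventually $T_\beta^n(t) < t$ and $t \notin E_\beta^+$. For $\dim_H E_\beta = 1$ I would exhibit, for every target $\eta < 1$, a Cantor subset of $E_\beta^+$ of Hausdorff dimension at least $\eta$. The construction concatenates long shift-maximal Lyndon words (the combinatorial objects named in the keywords) in a way that preserves shift-maximality, and counts admissible concatenations of length $n$ to obtain a growth rate close to $\beta^{n\eta}$; standard Moran-type estimates then deliver the desired lower bound on the Hausdorff dimension.

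Part (2) follows largely from part (1). The endpoints are direct: $K_\beta(0) = [0,1)$ gives $\eta_\beta(0) = 1$, while at $t = 1/\beta$ the survivor orbits are confined to $\{0\}\cup[1/\beta, 1)$ and one iterate of $T_\beta$ forces a very thin Cantor-like structure whose dimension is bounded above by $0$ via a direct covering argument. Monotonicity follows from $K_\beta(t') \subseteq K_\beta(t)$ for $t < t'$. For the assertion $\eta_\beta' = 0$ Lebesgue-a.e., I would use that by definition of $E_\beta$, for every $t \notin E_\beta$ the map $s \mapsto K_\beta(s)$ is constant on some right-neighbourhood $[t, t+\delta]$, so $\eta_\beta$ is right-constant there; combined with the non-increasing nature of $\eta_\beta$, at any point where $\eta_\beta$ is differentiable (almost every point, by Lebesgue's theorem for monotone functions) and which lies outside the Lebesgue-null set $E_\beta$, the derivative must vanish.

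The hardest step is the continuity of $\eta_\beta$. Monotonicity alone only delivers continuity outside a countable set of jump points, so the task reduces to ruling out jumps at each candidate $t_0 \in [0, 1/\beta]$. For this one needs an approximating sequence $t_n \downarrow t_0$ (and $s_n \uparrow t_0$) with $\dim_H K_\beta(t_n) \to \dim_H K_\beta(t_0)$. The natural candidates arise from truncating or perturbing the greedy $\beta$-expansion of $t_0$ along the Farey/Lyndon combinatorial structure attached to it, producing subshifts of finite type whose topological entropy approximates that of the subshift defining $K_\beta(t_0)$; continuity of topological entropy on such approximating subshifts then transfers to continuity of Hausdorff dimensions via the Bowen-type dimension formula. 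Carrying out this symbolic approximation uniformly across all $t_0$ (including aperiodic cases) and all $\beta \in (1,2)$ is the main technical obstacle.
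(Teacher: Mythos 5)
Your overall architecture matches the paper's: Lebesgue measure zero via ergodicity of the absolutely continuous invariant measure, full dimension via a subset of $E_\beta^+$ accumulating at $0$, the endpoint values and monotonicity by inspection, and $\eta_\beta'=0$ a.e.\ from local constancy of $t\mapsto K_\beta(t)$ off the null set $E_\beta$. The one genuine gap is the continuity of $\eta_\beta$, which you yourself flag as ``the main technical obstacle'' and do not resolve. This is not a routine step: monotonicity only rules out jumps off a countable set, and excluding jumps of $t\mapsto h_{top}(\mathcal K_\beta(t))$ means showing that slightly enlarging the hole cannot drop the entropy of the survivor subshift discontinuously; your proposed approximation by subshifts of finite type is delicate precisely because, as you note, the $\beta$-shift is generally not sofic and the scheme must work uniformly over all $t_0$, including aperiodic ones. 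The paper does not prove this from scratch either: it imports the continuity of $t\mapsto h_{top}(\mathcal K_\beta(t))$ and of $t\mapsto\dim_H K_\beta(t)$, together with the Bowen-type formula \eqref{q:dimensionformula}, from Raith's general results on piecewise monotone expanding interval maps with finitely many holes (Proposition~\ref{p:Raith}). Without either completing your approximation argument or invoking such a reference, the continuity bullet --- and hence the Devil's staircase claim --- remains unproved.

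A secondary point: your route to full Hausdorff dimension (a direct Moran construction from concatenated Lyndon words) differs from the paper's and is also only sketched. The paper observes that the set $E_{\beta,N}$ of points whose greedy expansion begins with $0^N$ and whose tail contains no $N$ consecutive zeros lies in $E_\beta\cap[0,\beta^{-N}]$ and equals the affine copy $\beta^{-N}K_\beta^+(\beta^{-N})$, so $\dim_H E_{\beta,N}=\dim_H K_\beta(\beta^{-N})\to\dim_H K_\beta(0)=1$ by continuity --- the same Raith input doing double duty. Your construction, if completed, would have the merit of not needing continuity for part (1); but you would have to verify that your concatenations are genuinely shift-minimal (elements of $\mathcal E_\beta^+$ are lexicographically \emph{smallest}, not largest, among their shifts) and $\beta$-admissible, and carry out the entropy count, none of which is done. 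Also note that $E_\beta\setminus E_\beta^+=E_\beta^0\setminus\{0\}$ consists of points whose own orbit hits $0$ before entering the hole (not points having an ``iterated preimage with orbit avoiding $(0,t)$''); it is countable, which is why it is negligible for both measure and dimension.
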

Figure \ref{fig:1} shows numerical plots of the dimension functions $\eta_\beta$ for $\beta\approx 1.61803$, the golden ratio, i.e., the real root bigger than 1 of the polynomial $x^2-x-1$ and for $\beta\approx 1.83929$, the tribonacci number, i.e., the real root bigger than 1 of the polynomial $x^3-x^2-x-1$. From the figures we can see that the value $\frac1{\beta}$ in the first item of Theorem~\ref{main:1}{\em (2)} is not sharp as a bound for the set of $x$ for which $\eta_\beta(x)=0$. Theorem~\ref{main:4} below gives more information on this bound.

\vskip .2cm The analogous statements of {\em (3)} and {\em (4)} of Theorem~\ref{th:12} for $\beta \in (1,2)$ do not always hold. The next main theorems show that in general the topological structure of $E_\beta$ differs from that of $E_2$ and that this structure depends on the value of $\beta$. Theorems~\ref{main:2} and \ref{main:3} imply that {\em (3)} of Theorem~\ref{th:12} holds only for a very small set of $\beta \in (1, 2)$.

\begin{main}\label{main:2}
For Lebesgue almost every $\beta\in(1,2)$ the bifurcation sets $E_\beta$ and $E_\beta^+$ contain infinitely many isolated and accumulation points arbitrarily close to zero and hence their closures are not Cantor sets. On the other hand,
\[
\dim_H\left(\set{\beta\in(1, 2): \exists \delta>0 \text{ such that } \overline{E_\beta^+}\cap [0,\delta] \textrm{ is a Cantor set}}\right) =1.
\] 
\end{main}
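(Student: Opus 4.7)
The starting point is the combinatorial characterisation of $E_\beta^+$: since the greedy $\beta$-expansion $\b(t)=b_1(t)b_2(t)\cdots$ is order-preserving in $t$, the condition $T_\beta^n(t)\ge t$ for all $n$ becomes the lexicographic condition $\si^n\b(t)\lge\b(t)$ for all $n\ge 0$, combined with $\beta$-admissibility $\si^n\b(t)\lle\al(\beta)$, where $\al(\beta)$ is the quasi-greedy expansion of $1$. Thus $E_\beta^+\cap[0,\de]$ corresponds bijectively to shift-maximal, $\beta$-admissible sequences starting with long blocks of $0$'s, which is the object I would analyse.

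For the first assertion, I would show that for Lebesgue almost every $\beta\in(1,2)$ the kneading sequence $\al(\beta)$ dominates the families I need: for a.e.\ $\beta$ and every $n$ sufficiently large one has $(0^n1)^\f\lle\al(\beta)$, so the point $t_n$ whose greedy expansion equals $(0^n1)^\f$ belongs to $E_\beta^+$ (shift-maximality is immediate from the periodic block structure), and $t_n\to 0$. A block-propagation argument then shows each $t_n$ is isolated from the left in $E_\beta^+$: any smaller admissible shift-maximal point would begin $0^{n+k}\cdots$ for some $k\ge 1$, and after $n+k$ shifts its tail would have to dominate a longer zero-block than the prefix allows, contradicting shift-maximality. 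To produce accumulation points close to $0$, I would use sequences such as the ones whose greedy expansions are $(0^n1)^k\,0^{n+1}1\,(0^n1)^\f$, converging as $k\to\f$ to $t_n^{*}:=\pi_\beta((0^n1)^\f 0^{n+1}1(0^n1)^\f)$ inside $E_\beta^+$, each admissible and shift-maximal provided $\al(\beta)$ is ``thick'' enough (a generic condition on $\beta$). Together this places isolated and accumulation points of $E_\beta^+$ arbitrarily close to $0$; the same construction transfers to $E_\beta$ using the decomposition $K_\beta=K_\beta^0\cup K_\beta^+$.

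For the dimension-$1$ statement, I would construct a Cantor set $\mathcal{B}\subset(1,2)$ with $\dim_H\mathcal{B}=1$ such that for each $\beta\in\mathcal{B}$ some neighbourhood $[0,\de]$ contains no isolated points of $E_\beta^+$. The idea is to prescribe $\al(\beta)$ by a Moran scheme in sequence space, transported to parameter space through the monotone kneading map $\beta\mapsto\al(\beta)$, so that every admissible shift-maximal word near $0^\f$ admits infinitely many local admissible and shift-maximal perturbations; combining this with Theorem~\ref{main:1}\,(1), which guarantees the set is closed and nowhere dense, forces $\overline{E_\beta^+}\cap[0,\de]$ to be a Cantor set. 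Choosing branching factors in the Moran scheme that tend to $\f$ slowly and using the bi-Lipschitz behaviour of $\beta\mapsto\al(\beta)$ on its branches yields $\dim_H\mathcal{B}=1$ by a standard mass-distribution estimate.

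The main obstacle is the fine combinatorial bookkeeping that decides isolation versus accumulation of a candidate $t\in E_\beta^+$ near $0$: one must simultaneously control the admissibility ceiling $\al(\beta)$ and the shift-maximality of $\b(t)$ for every shift and every candidate perturbation, and carefully separate the full-measure genericity conditions on $\al(\beta)$ needed for part~(1) from the dimension-$1$ exceptional constructions needed for part~(2).
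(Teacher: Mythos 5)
Your proposal has a fatal gap in the first part. You claim that for a.e.\ $\beta$ and all large $n$ the point $t_n=\pi_\beta((0^n1)^\infty)$ is isolated in $E_\beta^+$, arguing that any smaller point of $E_\beta^+$ must begin with $0^{n+k}$ for some $k\ge 1$. Both claims are false. A smaller shift-maximal admissible sequence can share the prefix $0^n1$ and only dip below later (e.g.\ sequences of the form $(0^n1)^j\,0^{n}1^2\cdots$ suitably completed), and whether such sequences are admissible is governed entirely by $\alpha(\beta)$. Concretely, $(0^n1)^\infty$ is isolated in $\mathcal E_\beta^+$ precisely when $(10^n)^\infty\prec\alpha(\beta)\lle 10^{n-1}1(0^n1)^\infty$ (this is the content of the paper's Proposition~\ref{prop:312} applied to the word $10^n$); once $\alpha(\beta)\succ 10^{n-1}1(0^n1)^\infty$ one can exhibit an explicit sequence in $\mathcal E_\beta^+$ decreasing to $(0^n1)^\infty$, so $t_n$ is an \emph{accumulation} point. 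Since the right endpoints of these parameter intervals tend to $1$ as $n\to\infty$, every fixed $\beta\in(1,2)$ lies above all but finitely many of them: your candidate isolated points are accumulation points for all large $n$. The isolated points near $0$ cannot be chosen universally; they must be built adaptively from $\alpha(\beta)=1^{l_1}0^{m_1}1^{l_2}0^{m_2}\cdots$, using the record indices where a new longest zero-block appears, and this works exactly when the zero-blocks of $\alpha(\beta)$ are unbounded --- the complement of Schmeling's set $C_3$, which has full measure. Relatedly, your one-line transfer from $E_\beta^+$ to $E_\beta$ does not work: no point of $E_\beta^+$ is isolated in $E_\beta$ (Proposition~\ref{prop:36}), so the isolated points of $E_\beta$ near $0$ must be produced separately from $E_\beta^0$ (e.g.\ the points $\beta^{-n}$, isolated when the orbit of $1$ enters $(0,t)$).

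The second part is also incomplete as proposed. A Moran construction in parameter space can certainly produce dimension-$1$ subsets of $(1,2)$, but you never identify \emph{which} combinatorial property of $\alpha(\beta)$ guarantees that $E_\beta^+\cap[0,\delta]$ has no isolated points, so the construction has nothing to aim at. The missing ingredients are: (a) every isolated point of $E_\beta^+$ has a \emph{periodic} greedy expansion $(t_1\cdots t_m)^\infty$ with $t_1\cdots t_m$ Lyndon (via approximation by periodic Lyndon prefixes), and (b) if such a point lies in $[0,\beta^{-(M+3)}]$ then $t_1\cdots t_m$ starts with $0^{M+2}$ and the interval condition on $\beta$ forces $\alpha(\beta)$ to contain $M+1$ consecutive zeros. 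Hence bounded zero-blocks in $\alpha(\beta)$ (i.e.\ $\beta\in C_3$) rule out isolated points near $0$, and the dimension-$1$ statement follows from Schmeling's theorem that $\dim_H C_3=1$ (note also that the relevant regularity of $\beta\mapsto\alpha(\beta)$ available here is H\"older, not bi-Lipschitz, so a mass-distribution argument would need care in any case).
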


There are also infinitely many $\beta\in(1,2]$ such that $\overline{E_\beta^+}$ is a Cantor set. This is true, for example, for the countable family of multinacci numbers. In terms of Hausdorff dimension this set is small.
\begin{main}\label{main:3}
 We have $\dim_H \big( \{\beta\in(1, 2):  \overline{E_\beta^+} \textrm{ is a Cantor set}\} \big) =0$.
 \end{main}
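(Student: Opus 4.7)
The plan is to extract from "$\overline{E_\beta^+}$ is a Cantor set" (equivalently, perfect, once total disconnectedness is granted from Theorem \ref{main:1}) a rigid combinatorial condition on the $\beta$-expansion of $1$, and then to estimate the Hausdorff dimension of the set of parameters for which this condition is met.

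First I would read off, from the machinery underlying Theorems \ref{main:1} and \ref{main:2}, that isolated points of $\overline{E_\beta^+}$ correspond to right endpoints of maximal intervals (``plateaus'') on which the set-valued map $t\mapsto K_\beta(t)$ is constant, and that these endpoints are precisely the parameters $t$ whose greedy $\beta$-expansion is eventually periodic with a block of Lyndon-word type. Thus $\overline{E_\beta^+}$ is a Cantor set if and only if no admissible eventually periodic sequence of the relevant shape is realised as the $\beta$-expansion of some $t\in E_\beta^+$.

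Second I would translate this exclusion into a constraint on the greedy expansion $\alpha(\beta)=\alpha_1\alpha_2\alpha_3\ldots$ of $1$. Admissibility at a candidate periodic parameter $t$ with period word $w$ is governed by the lexicographic inequalities $\sigma^k(w^\infty)\preccurlyeq \alpha(\beta)$ for every $k\ge 0$, where $\sigma$ denotes the shift. Forbidding every such $w$, of every length, forces $\alpha(\beta)$ to lie strictly below $w^\infty$ for a family of Lyndon words $w$ that is dense in the shift space; concretely it forces $\alpha(\beta)$ to be ``almost minimal'' among admissible sequences and in particular to have infinitely many long prefixes that strongly restrict the continuation. The final step is a covering argument. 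Because $\beta\mapsto\alpha(\beta)$ is monotone increasing, the set of parameters whose kneading sequence begins with a fixed admissible word $u$ of length $n$ is an interval of length at most $C\beta^{-n}$. The constraint from step two bounds the number of admissible length-$n$ prefixes compatible with the Cantor-set property by a quantity growing subexponentially in $n$; more precisely, smaller than $\beta^{\epsilon n}$ for every $\epsilon>0$ once $n$ is large. Summing the $s$-th powers of the cylinder lengths for any $s>0$ therefore gives arbitrarily small $s$-dimensional Hausdorff mass, and letting $n\to\infty$ yields dimension $0$.

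The hard part, I expect, is step one: pinning down precisely which points of $\overline{E_\beta^+}$ are isolated. Because admissibility of periodic patterns depends on $\beta$ itself, one has to rule out infinitely many periodic and pre-periodic orbits simultaneously, and some of these arise from subtle interactions between the kneading data of $t$ and of $1$; bookkeeping all of them in a single combinatorial condition on $\alpha(\beta)$ that is still sharp enough for the subexponential counting of step three is the delicate point. Once such a characterisation is in hand, the covering and dimension estimate should proceed along standard lines.
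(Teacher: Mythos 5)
Your overall architecture --- reduce the Cantor-set property to a combinatorial condition on $\alpha(\beta)$ and then run a covering argument in parameter space --- is in the right spirit, but there are two genuine gaps. The first is in your steps one and two: the condition for $E_\beta^+$ to contain an isolated point is \emph{not} that some admissible periodic sequence ``of the relevant shape'' is realised, and forbidding isolated points does \emph{not} force $\alpha(\beta)$ to lie below $w^\infty$ for a dense family of Lyndon words $w$ --- that would force $\alpha(\beta)$ to be essentially minimal, so the Cantor-set condition would almost never hold, contradicting for instance the multinacci examples. What actually happens (Proposition~\ref{prop:312}) is that each Lyndon word $s_1\ldots s_m$, with maximal cyclic permutation $a_1\ldots a_m$, produces an isolated point of $E_\beta^+$ precisely when $\alpha(\beta)$ lies in the two-sided window from $(a_1\ldots a_m)^\infty$ (exclusive) to $a_1\ldots a_m^+(s_1\ldots s_m)^\infty$ (inclusive); for $\beta$ above the right endpoint the point $\pi_\beta((s_1\ldots s_m)^\infty)$ still belongs to $E_\beta^+$ but is accumulated from above by other periodic points of $E_\beta^+$. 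So the exceptional set of parameters is the complement of a union of intervals $(\beta_L,\beta_R]$, and the upper endpoints are essential; your formulation in terms of admissibility alone misses them entirely.

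The second gap is that the subexponential count of admissible length-$n$ prefixes in your step three is asserted, not proved, and it is exactly the crux of the matter: it amounts to showing that the residual symbolic set (which turns out to consist of Sturmian-type sequences) has zero entropy. The paper closes this by showing the maximal windows are nested and indexed by Farey words (Proposition~\ref{prop:313} and Definition~\ref{def:45}), mapping the complement of their union into the known dimension-zero set $E_D$ of the doubling map via $\phi:\beta\mapsto\pi_2(\alpha(\beta))$ (Lemma~\ref{lem:49}), importing $\dim_H E_D=0$ from \cite{CIT13}, and checking that $\phi^{-1}$ is H\"older continuous on $[1+\frac1N,2)$ (Lemma~\ref{lem:412}) so that dimension zero is preserved. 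Without either that transfer or an independent proof of your counting bound, your step three does not close; and even granting it, the cylinder-length estimate $C\beta^{-n}$ must be made uniform away from $\beta=1$, which is why the paper works on $[1+\frac1N,2)$ and takes a supremum over $N$.
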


In \cite{Cla16} Clark considered the $\beta$-transformation and characterised the holes of the form $(a,b)$ for which the survivor set $K_\beta((a, b))$ is uncountable or not. It turns out that for each $\beta \in (1,2)$, there is a unique value $\tau_\beta$, such that $\dim_H K_\beta(t)>0$ if and only if $t < \tau_\beta$. By {\em(4)} of Theorem~\ref{th:12}  we know $\tau_2 = \frac12$. We have the following result on $\tau_\beta$ for $\beta \neq 2$.
\begin{main}\label{main:4}
For each $\beta \in (1,2)$ we have $\tau_\beta \le 1-\frac1{\beta}$, and $\tau_\beta = 1-\frac1{\beta}$ if and only if $\overline{E^+_\beta}$ is a Cantor set.
 \end{main}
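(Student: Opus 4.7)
I would split the proof into two parts: (a) the universal upper bound $\tau_\beta\le 1-\frac{1}{\beta}$, and (b) the equivalence $\tau_\beta=1-\frac{1}{\beta}$ if and only if $\overline{E_\beta^+}$ is a Cantor set. Since Theorem~\ref{main:1} tells us $\overline{E_\beta^+}$ is Lebesgue null, hence nowhere dense and totally disconnected, being a Cantor set reduces to being nonempty and having no isolated points.

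\textbf{Upper bound.} Set $t=1-\frac{1}{\beta}$. Since $K_\beta(t)\setminus K_\beta^+(t)\subseteq K_\beta^0(t)$ is countable and $\eta_\beta$ is non-increasing by Theorem~\ref{main:1}, it suffices to show $\dim_H K_\beta^+(t)=0$. On $[t,1)$, $T_\beta$ has two branches: on $J_0=[t,1/\beta)$, $x\mapsto\beta x$ with image $[\beta-1,1)$; on $J_1=[1/\beta,1)$, $x\mapsto\beta x-1$ with image $[0,\beta-1)$. For $x\in K_\beta^+(t)$, the left branch preserves the threshold automatically, but the right branch requires $x\ge(2\beta-1)/\beta^2$ and confines the image to $[t,\beta-1)$. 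Iterating these constraints yields an admissible subshift of the $\{0,1\}$-coding whose $n$-cylinders have length at most $\beta^{-n}$ while their cardinality grows only subexponentially in $n$; a standard box-counting bound then gives $\dim_H K_\beta^+(t)=0$, hence $\tau_\beta\le 1-\frac{1}{\beta}$.

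\textbf{Equivalence.} For $(\Leftarrow)$, assume $\overline{E_\beta^+}$ has no isolated points; combined with the upper bound I only need $\tau_\beta\ge 1-\frac{1}{\beta}$. Using the combinatorial description of $E_\beta^+$ via Lyndon/Farey words developed earlier in the paper, the no-isolated-points assumption produces a sequence $t_n\in E_\beta^+$ with $t_n\uparrow 1-\frac{1}{\beta}$. At each $t_n$ one builds a full-shift-like subsystem of admissible $\beta$-codes contained in $K_\beta^+(t_n)$, of strictly positive Hausdorff dimension; hence $\eta_\beta(t_n)>0$ and $\tau_\beta\ge\lim_n t_n=1-\frac{1}{\beta}$. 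For $(\Rightarrow)$ I argue contrapositively: if $\overline{E_\beta^+}$ has an isolated point, let $s_0$ be the largest one in $[0,1-\frac{1}{\beta}]$ and pick $\delta>0$ with $(s_0,s_0+\delta)\cap\overline{E_\beta^+}=\emptyset$. A short argument (any bifurcation of $K_\beta^+$ on an interval forces an element of $E_\beta^+$ there) shows that $K_\beta^+$, and thus $\eta_\beta$, is constant on this gap; by the upper bound together with continuity and monotonicity of $\eta_\beta$, this constant value must be $0$, so $\tau_\beta\le s_0<1-\frac{1}{\beta}$.

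\textbf{Main obstacle.} The hardest step is the $(\Leftarrow)$ direction: constructing, inside $K_\beta^+(t_n)$ with $t_n$ close to $1-\frac{1}{\beta}$, an explicit subsystem of positive Hausdorff dimension. This relies critically on the paper's Lyndon/Farey-word description of the bifurcation values, and is where the combinatorial machinery developed earlier really pays off. A secondary subtlety in direction $(\Rightarrow)$ is that $K_\beta^+(t)$ may continue to bifurcate on $[\tau_\beta,1-\frac{1}{\beta}]$ even after $\eta_\beta$ has already dropped to $0$, which makes the choice of the ``largest isolated point'' $s_0$ more delicate than it first appears.
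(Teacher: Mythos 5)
Your overall architecture (upper bound plus a two-way equivalence reduced to isolated points) matches the paper's, but each of the three steps contains a genuine gap, and the most serious one is in the upper bound. You assert that for $t=1-\frac1\beta$ the admissible $n$-cylinders of $\mathcal K_\beta^+(t)$ ``grow only subexponentially'' and that ``a standard box-counting bound'' finishes the job. That subexponential growth \emph{is} the theorem: a priori the window $[1-\frac1\beta,1)$ carries two branches of $T_\beta$, and nothing in your branch analysis rules out exponentially many admissible itineraries (indeed, for $t$ slightly below the true critical value the count \emph{is} exponential). The paper has no soft argument here; it proves the bound by showing that the symbolic survivor set at the threshold is actually \emph{finite or empty}, case by case over the Farey intervals: at a left endpoint $\gamma_L$ it is empty (equation \eqref{eq:58}), strictly inside a Farey interval the set $\mathcal K_\beta^+(t^\diamond)$ with $t^\diamond<1-\frac1\beta$ is finite by an induction on the standard factorisation of Farey words (Proposition \ref{prop:44}), and outside all Farey intervals one approximates $\beta$ from above by left endpoints $\gamma_{L,N}$ and embeds $\mathcal K_\beta^+(t)$ into an empty set. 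None of this is replaceable by a generic counting estimate.

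The two directions of the equivalence also do not close as written. For $(\Rightarrow)$, your key inference is that $\eta_\beta$ is constant on the gap to the right of the largest isolated point and that ``by the upper bound together with continuity and monotonicity this constant must be $0$.'' That does not follow: $\eta_\beta$ is a Devil's staircase and is constant, with \emph{positive} value, on every gap of $E_\beta$ below $\tau_\beta$; constancy on a gap forces the value to be $0$ only if the gap reaches a point where $\eta_\beta$ is already known to vanish, which is what you are trying to prove. (Also, a ``largest isolated point'' need not exist.) The paper's route is structural: isolated points of $E_\beta^+$ have periodic expansions (Proposition \ref{prop:36}) and occur precisely when $\beta$ lies in a basic, hence Farey, interval (Propositions \ref{prop:312} and \ref{prop:313}), and then Lemma \ref{lem:55} produces the explicit $t^\diamond<1-\frac1\beta$ with $\dim_H K_\beta(t^\diamond)=0$. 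For $(\Leftarrow)$, the claim that the absence of isolated points ``produces'' $t_n\in E_\beta^+$ with $t_n\uparrow 1-\frac1\beta$ is unsupported: no-isolated-points says nothing by itself about $\sup E_\beta^+$. The paper instead translates the hypothesis into $\beta\notin\bigcup(\gamma_L,\gamma_R]$, approximates $\beta$ from below by left endpoints $\gamma_{L,k}$, and imports the positive-entropy concatenation systems of Lemma \ref{lem:53} from those bases into $\mathcal K_\beta^+(t)$ for every $t<1-\frac1\beta$. In short, the combinatorial Farey-word machinery is not an optional ingredient for one direction, as your ``main obstacle'' paragraph suggests; it is what carries all three parts of the statement.
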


\vskip .2cm
In \cite{Nil} Nilsson studied the critical value $\tau_\beta'$ for the $\beta$-transformation with holes of the form $(t,1)$. In \cite[Theorem 7.11]{Nil} he proved that for each $\beta \in (1,2)$ it holds that $\tau_\beta' = 1-\frac1{\beta}$. Many of the proofs use the symbolic codings of the open systems $T_\beta$ with hole $(t,1)$. The main difficulty that we had to overcome in order to extend the results from the doubling map to the $\beta$-transformation is that the $\beta$-transformation is not coded by the full shift on two symbols. In fact, for most values of $\beta$, the associated symbolic system is not even sofic. This might also explain the difference between the result from Theorem~\ref{main:4} and the result from \cite[Theorem 7.11]{Nil}.

\vskip .2cm

The paper is arranged as follows. In Section \ref{sec:2} we introduce some notation, we recall some basic properties of $\beta$-expansions and prove Theorem~\ref{main:1}. In Section \ref{sec:3} we consider the topological structure of $E_\beta$ and $E_\beta^+$ and prove Theorem \ref{main:2}. By means of Lyndon words we construct infinitely many nested basic intervals which cover the interval $(1,2)$ up to a Lebesgue null set. We can determine all isolated points of $E_\beta^+$ by determining in which intervals it falls. The largest of these intervals are then associated to Farey words, the properties of which allow us to prove Theorem \ref{main:3} in Section \ref{sec:4} and Theorem \ref{main:4} in Section \ref{sec:5}.

\section{Preliminaries, $\beta$-expansions and first properties of $K_\beta(t)$ and $E_\beta$}\label{sec:2}
In this section we introduce some notation about sequences that we will use throughout the paper, we will recall some basic properties of $\beta$-transformations and we give some basic results on $K_\beta(t)$ and $E_\beta$. We also prove Theorem~\ref{main:1}.

\subsection{Notation on sequences}
Let $\set{0,1}^\mathbb{N}$ be the set of sequences of 0's and 1's and let $\si$ be the left shift on $\set{0,1}^\mathbb{N}$ defined by $\si((x_i))=(x_{i+1})$. We use $\{0,1\}^*$ to denote the set of all finite strings of elements from $\{0,1\}$, called \emph{words}. A word $w \in \{0,1\}^n$ is called a {\em prefix} of a sequence $(x_i) \in \{0,1\}^{\mathbb N}$ if $x_1 \ldots x_n = w$. For a word $w=w_1\ldots w_n\in\set{0, 1}^*$ we write $w^+:=w_1\ldots w_{n-1}(w_n+1) $ if $w_n=0$, and we write $w^-:= w_1w_2 \ldots w_{n-1} (w_n-1)$ if $w_n=1$. Furthermore, we use $\overline w$ to denote the \emph{reflection word} $\overline w:= (1-w_1)(1-w_2) \ldots (1-w_n)$.

\vskip .2cm
Throughout the paper we use  the lexicographical ordering  $\prec, \lle, \succ$ and $\lge$  between sequences and words, which is defined as follows. For two sequences $(x_i), (y_i)\in\set{0,1}^\mathbb{N}$ we write $(x_i)\prec (y_i)$ or $(y_i)\succ (x_i)$ if there is a smallest $m\in\N$ such that  $x_m<y_m$. Moreover, we say $(x_i)\lle (y_i)$ or $(y_i)\lge (x_i)$   if $(x_i)\prec (y_i)$ or $(x_i)=(y_i)$.   This definition can be extended to words in the following way. For $u,v \in \{0,1\}^*$, we write $u \prec v$ if and only if $u0^{\infty} \prec v0^{\infty}$.

\vskip .2cm 
Let $\# A$ denote the cardinality of the set $A$. For a subset $\mathcal Y\subseteq\set{0,1}^\N$, let $\B_n(\mathcal Y)$ denote the set of all words of length $n$ that occur in a sequence in $\mathcal Y$. The \emph{topological entropy} of $\mathcal Y$ is then given by
\[ h(\mathcal Y):=\lim_{n\ra\f}\frac{\log\#\B_n(\mathcal Y)}{n} = \inf_{n} \frac{\log\#\B_n(\mathcal Y)}{n},\]
since by the definition of $\B_n(\mathcal Y)$  the sequence $(\log \# \B_n(\mathcal Y))$ is sub-additive. Here and throughout the paper we will use the base 2 logarithm. 

\vskip .2cm

 \subsection{The $\beta$-transformation and $\beta$-expansions}
Now we recall some properties of $\beta$-transformations. Let $\beta\in(1,2]$ and let the (greedy) {\em $\beta$-transformation} $T_\beta:[0,1) \to [0,1)$ be given as in the introduction, i.e., $T_\beta(x) = \beta x \pmod 1$. It has a unique ergodic invariant measure that is equivalent to the Lebesgue measure (cf.~\cite{Ren57}). This measure is the unique measure of maximal entropy with entropy equal to $\log \beta$. For each $x \in [0,1)$ the {\em greedy $\beta$-expansion} of $x$, denoted by  $b(x,\beta) = (b_i(x,\beta))$, is the sequence obtained from $T_\beta$ by setting for each $i\ge 1$,
 \[
 b_i(x,\beta)=\left\{\begin{array}{lll}
 0,&\textrm{if}&  T^{i-1}_\beta(x)\in[0, \frac{1}{\beta}),\\
 1,&\textrm{if}&T^{i-1}_\beta (x)\in[ \frac{1}{\beta}, 1).
 \end{array}\right.
 \]   
The name greedy $\beta$-expansion stems from the fact that it is the lexicographically largest sequence $ (x_i) \in \set{0,1}^\mathbb{N}$ satisfying
\begin{equation}\label{eq:21}
x = \sum_{i \ge 1} \frac{x_i}{\beta^i}=:\pi_\beta((x_i)).
\end{equation}
We write $b(1, \beta)$ for the sequence $1b(\beta-1,\beta)$.

\vskip .2cm
The set of sequences that occur as greedy $\beta$-expansions for a given $\beta$ can be characterised using quasi-greedy $\beta$-expansions. For each $x\in(0,1]$ the quasi-greedy $\beta$-expansion of $x$ is obtained dynamically by iterating the map $\widetilde T_\beta:(0,1] \to (0,1]$ given by
\[\widetilde T_\beta (x) =\left\{\begin{array}{lll}
\beta x, &\textrm{if}&x \in \big(0, \frac1\beta \big],\\
 \beta x -1, &\textrm{if}&x \in \big( \frac1\beta, 1 \big].
\end{array}\right.
\]
The only essential difference between the maps $T_\beta$ and $\widetilde T_\beta$ is the value they take at the point $\frac{1}{\beta}$. For $x\in(0, 1]$ the \emph{quasi-greedy} $\beta$-expansion $\tilde b(x, \beta)=(\tilde b_i(x, \beta))$ is then obtained by setting $\tilde b_i(x, \beta) = 0$, if $  0 <  \widetilde T_\beta^{i-1}(x) \le \frac1{\beta}$ and $\tilde b_i(x, \beta)=1$, if $\frac{1}{\beta}<\widetilde T^{i-1}(x)\le 1$.
The quasi-greedy $\beta$-expansion of $1$ plays a crucial role in what follows. For $\beta\in(1,2]$, write
\[\al(\beta) := \tilde b(1, \beta).\]
Note that if $b(x, \beta)=b_1\ldots b_n 0^\f$ with $b_n=1$, then $\tilde b(x, \beta)=b_1\ldots b_n^-\al(\beta)$. On the other hand, if $b(x, \beta)$ does not end with $0^\f$, then $b(x, \beta)=\tilde b(x, \beta)$. The following characterisation of $\al(\beta)$ can be found in \cite[Theorem 2.3]{KomLor07}.
  \begin{lemma}
    \label{lem:21}
    Let $\mathcal Q \subset \{0,1\}^{\mathbb N}$ be the set of sequences $(a_i)\in\set{0, 1}^\N$ not ending with $0^\f$ and satisfying
\begin{equation*}
a_{n+1}a_{n+2}\ldots\lle a_1a_2\ldots\quad \textrm{for all}\quad n\ge 0.
\end{equation*}
The map $ \beta\mapsto \al(\beta)$ is a strictly increasing bijection between the interval $(1,2]$ and  $\mathcal Q$.
  \end{lemma}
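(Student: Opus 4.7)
The plan is to verify the three required properties separately: membership in $\mathcal{Q}$, strict monotonicity, and surjectivity, using only the definition of $\widetilde T_\beta$ and the projection $\pi_\beta$ from \eqref{eq:21}.

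First I would show that $\alpha(\beta) \in \mathcal{Q}$ for every $\beta \in (1,2]$. That $\alpha(\beta)$ does not end in $0^\infty$ is immediate from the definition of $\widetilde T_\beta$: since $\widetilde T_\beta$ maps $(0,1]$ into $(0,1]$, the orbit $\widetilde T_\beta^n(1)$ is always strictly positive, so infinitely many digits $\tilde b_i(1,\beta)$ equal $1$. For the lexicographic inequality, fix $n \ge 0$ and set $x_n := \widetilde T_\beta^n(1) \in (0,1]$. One checks by induction on $k$ that $\tilde b_i(x_n,\beta) = \tilde b_{n+i}(1,\beta)$, and hence $\pi_\beta(\sigma^n \alpha(\beta)) = x_n \le 1 = \pi_\beta(\alpha(\beta))$. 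A short auxiliary lemma — that for two sequences $(u_i),(v_i)$ neither ending in $0^\infty$, the relation $\pi_\beta(u) \le \pi_\beta(v)$ together with the quasi-greedy property forces $(u_i) \lle (v_i)$ — then gives $\sigma^n \alpha(\beta) \lle \alpha(\beta)$ for all $n \ge 0$.

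Next I would handle strict monotonicity. Take $1 < \beta_1 < \beta_2 \le 2$. Since $\pi_{\beta_1}(\alpha(\beta_1)) = 1 = \pi_{\beta_2}(\alpha(\beta_2))$ and $\beta \mapsto \pi_\beta((a_i))$ is strictly decreasing whenever $(a_i)$ is not the zero sequence, we get $\pi_{\beta_2}(\alpha(\beta_1)) < 1 = \pi_{\beta_2}(\alpha(\beta_2))$. Combined with the auxiliary comparison lemma (noting $\alpha(\beta_1) \in \mathcal{Q}$ so neither expansion ends in $0^\infty$), this forces $\alpha(\beta_1) \prec \alpha(\beta_2)$. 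In particular $\beta \mapsto \alpha(\beta)$ is injective.

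The main work is surjectivity. Given $(a_i) \in \mathcal{Q}$, I would define $f(\beta) := \sum_{i \ge 1} a_i \beta^{-i}$ for $\beta > 1$. Since $(a_i) \neq 0^\infty$ and $(a_i) \neq 1^\infty \cdot \text{(no, but)}$, the function $f$ is continuous and strictly decreasing on $(1,\infty)$, with $\lim_{\beta \downarrow 1} f(\beta) \ge 1$ and $\lim_{\beta \to \infty} f(\beta) = 0$; moreover $f(2) \le 1$. Hence there is a unique $\beta \in (1,2]$ with $f(\beta) = 1$. It remains to prove $\alpha(\beta) = (a_i)$. For this I would show by induction on $n$ that the quasi-greedy algorithm applied to $1$ at step $n$ chooses exactly $a_{n+1}$: the shift condition $\sigma^n(a_i) \lle (a_i)$ for all $n \ge 0$ implies $\widetilde T_\beta^n$ applied to the tail $\sum_{i \ge 1} a_{n+i} \beta^{-i}$ stays in $(0,1]$ and produces digit $a_{n+1}$, so the quasi-greedy digits agree with $(a_i)$. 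The key obstacle here is handling the boundary case where a partial sum equals $1/\beta$ — exactly the case where greedy and quasi-greedy diverge — and the non-ending-in-$0^\infty$ hypothesis on $(a_i)$ is precisely what is needed to rule out the wrong choice and force the quasi-greedy digit.
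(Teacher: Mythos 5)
The paper offers no proof of this lemma to compare against: it is quoted verbatim from \cite[Theorem 2.3]{KomLor07}. Judged on its own merits, your overall strategy (admissibility of $\al(\beta)$, strict monotonicity via $\pi_\beta$, surjectivity via the intermediate value theorem plus a digit-by-digit identification) is the standard Parry/Komornik--Loreti route. The first two parts are sound modulo the standard facts you invoke, with one small caveat: for the monotonicity step your comparison lemma must be used in its one-sided form --- the quasi-greedy expansion of $y$ is the lexicographically largest representation of $y$ not ending in $0^\f$ --- because $\al(\beta_1)$ is a $\beta_2$-representation of $\pi_{\beta_2}(\al(\beta_1))$ but not its quasi-greedy $\beta_2$-expansion, so you cannot assume ``the quasi-greedy property'' of both sequences there.

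The genuine gap is in surjectivity, at the sentence asserting that the shift condition $\si^n((a_i))\lle (a_i)$ implies that the tail values $r_n:=\sum_{i\ge 1}a_{n+i}\beta^{-i}$ stay in $(0,1]$. This is the technical heart of the whole lemma, and it does not follow from the shift condition term by term: for $\beta<2$ lexicographic domination does not imply numerical domination (e.g.\ $01^\f\prec 10^\f$ while $\pi_\beta(01^\f)=\frac{1}{\beta(\beta-1)}>\frac{1}{\beta}=\pi_\beta(10^\f)$ for every $\beta\in(1,2)$), so you cannot pass directly from $\si^n((a_i))\lle (a_i)$ to $r_n\le r_0=1$. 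A forward induction does not close either: to see that the quasi-greedy digit at step $n$ is $a_{n+1}=0$ you need $r_n\le \frac1\beta$, i.e.\ $r_{n+1}\le 1$, which is exactly the claim you would be proving one step later. The standard repair is a descent argument: if $r_N>1$ for some $N$, let $m$ be the first index where $\si^N((a_i))$ and $(a_i)$ disagree, so that $a_{N+i}=a_i$ for $i<m$ and $a_{N+m}=0<1=a_m$; subtracting the identities
\[
r_N=\sum_{i<m}a_i\beta^{-i}+\beta^{-m}r_{N+m},\qquad 1=\sum_{i<m}a_i\beta^{-i}+\beta^{-m}+\beta^{-m}r_m,
\]
gives $r_{N+m}-1=\beta^{m}(r_N-1)+r_m>\beta(r_N-1)>0$, so iterating produces indices along which $r_n-1$ grows geometrically, contradicting the uniform bound $r_n\le\frac{1}{\beta-1}$. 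With $0<r_n\le 1$ established for all $n$, your digit identification goes through, and the boundary case $r_n=\frac1\beta$ together with the ``not ending in $0^\f$'' hypothesis is handled exactly as you indicate --- though note that this boundary issue is not the main obstacle; the bound $r_n\le 1$ is. Finally, the fragment ``$(a_i)\neq 1^\infty\cdot(\text{no, but})$'' should be cleaned up: all you need there is that $(a_i)$ has infinitely many $1$'s, whence $f(\beta)\to\infty$ as $\beta\downarrow 1$.
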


For a given $\beta$, the sequence $\al(\beta)$ determines the set of all greedy $\beta$-expansions in the following way. Let $\Sigma_\beta$ be the set of all greedy $\beta$-expansions of $x\in[0, 1)$. Then (cf.~\cite{Par60})
\begin{equation}\label{eq:22}
\Sigma_\beta=\set{(x_i)\in \set{0,1}^\mathbb{N}: \si^n((x_i))\prec \al(\beta) \quad \textrm{for all }n\ge 0}.
\end{equation}
Similarly, let $\widetilde \Sigma_\beta$ be the set of all quasi-greedy $\beta$-expansions of $x\in(0, 1]$. Then  
\[
\widetilde \Sigma_\beta=\set{(x_i)\in\set{0,1}^\N: 0^\f\prec \si^n((x_i))\lle \al(\beta)\quad\textrm{for all }n\ge 0}.
\]

\noindent The following result can be found in \cite{Par60} (see also \cite{KLV16}).
\begin{lemma}\label{lem:22}
Let $\beta\in(1,2]$. Then the map $x\mapsto b(x, \beta)$ is a strictly increasing bijection from $[0, 1)$ to $\Sigma_\beta$ and it is right-continuous w.r.t.~the ordering topology on $\Sigma_\beta$.\newline
On the other hand, the map $x\mapsto \tilde b(x, \beta)$ is a strictly increasing bijection from $(0, 1]$ to $\widetilde\Sigma_\beta$ and it is left-continuous w.r.t.~the ordering topology on $\widetilde\Sigma_\beta$.
\end{lemma}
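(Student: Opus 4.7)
The plan is to handle the greedy and quasi-greedy halves in parallel, since the only genuine difference between them is the convention at $y=1/\beta$: $T_\beta$ is right-continuous there with $T_\beta(1/\beta)=0$, while $\widetilde T_\beta$ is left-continuous with $\widetilde T_\beta(1/\beta)=1$. I would begin by noting that iterating the identity $x=b_1(x,\beta)/\beta+T_\beta(x)/\beta$ gives $\pi_\beta(b(x,\beta))=x$, which immediately forces injectivity of $b(\cdot,\beta)$; surjectivity onto $\Sigma_\beta$ is built into the definition of that set as the image of $b(\cdot,\beta)$. The same reasoning applies to $\tilde b(\cdot,\beta)$, with $\widetilde T_\beta$ in place of $T_\beta$ and $\widetilde\Sigma_\beta$ as target.

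For strict monotonicity, I would take $0\le x<y<1$ and let $n$ be the first index where the two greedy digit sequences disagree (such an $n$ exists because $\pi_\beta\circ b(\cdot,\beta)=\mathrm{id}$). Agreement of the first $n-1$ digits forces $T_\beta^{n-1}(y)-T_\beta^{n-1}(x)=\beta^{n-1}(y-x)>0$, and since $z\mapsto b_n(z,\beta)=\mathbf{1}_{[1/\beta,1)}(z)$ is non-decreasing we must have $b_n(x,\beta)<b_n(y,\beta)$, hence $b(x,\beta)\prec b(y,\beta)$. Replacing the indicator by $z\mapsto\tilde b_n(z,\beta)=\mathbf{1}_{(1/\beta,1]}(z)$ on $(0,1]$ gives the same statement for $\tilde b$.

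Right-continuity of $b(\cdot,\beta)$ is the core step. Fix a sequence $x_k\searrow x$ in $[0,1)$. I would prove by induction on $i$ that, for $k$ large enough, $b_i(x_k,\beta)=b_i(x,\beta)$ and $T_\beta^{i-1}(x_k)\searrow T_\beta^{i-1}(x)$. For $i=1$ this splits into two cases: if $x<1/\beta$, then eventually $x_k<1/\beta$ as well, the first digit is $0$ in both, and $T_\beta(x_k)=\beta x_k\searrow\beta x$; if $x\ge 1/\beta$, then $x_k\ge x\ge 1/\beta$ always, the first digit is $1$ in both, and $T_\beta(x_k)=\beta x_k-1\searrow\beta x-1$. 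In both cases the next iterate still approaches from the right, so the inductive step runs verbatim, and the claimed cylinder agreement for every finite level is exactly right-continuity in the ordering topology on $\Sigma_\beta$. The subtlety, and the whole point of the convention, is the value of the digit at $y=1/\beta$ itself: the greedy choice $b_1(1/\beta,\beta)=1$ is precisely what makes the indicator $\mathbf{1}_{[1/\beta,1)}$ right-continuous at the single jump. Mirroring the argument, with $x_k\nearrow x$ and the quasi-greedy convention $\tilde b_1(1/\beta,\beta)=0$, gives left-continuity of $\tilde b(\cdot,\beta)$ and completes the second half of the lemma.
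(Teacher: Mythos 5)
Your argument is correct, and there is nothing in the paper to compare it against: the paper states this lemma as a known result, citing Parry and \cite{KLV16}, and gives no proof. Your self-contained derivation is sound: $\pi_\beta\circ b(\cdot,\beta)=\mathrm{id}$ gives injectivity; the affine identity $T_\beta^{n-1}(y)-T_\beta^{n-1}(x)=\beta^{n-1}(y-x)$ on the first disagreement index, combined with monotonicity of the digit indicator, gives strict order-preservation; and the inductive cylinder-agreement argument for $x_k\searrow x$ (resp.\ $x_k\nearrow x$) gives convergence in the product topology, which implies convergence in the ordering topology, so right- (resp.\ left-) continuity follows. You correctly isolate the one place where the two halves differ, namely the half-open convention at $\frac1\beta$. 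Two minor remarks. First, surjectivity is only ``built into the definition'' because the paper defines $\Sigma_\beta$ and $\widetilde\Sigma_\beta$ as the images of $b(\cdot,\beta)$ and $\tilde b(\cdot,\beta)$; the lexicographic descriptions in \eqref{eq:22} are a separate cited fact (Parry's admissibility criterion), and your proof does not and need not establish them. Second, cylinder agreement at every finite level is \emph{sufficient} for convergence in the ordering topology (if $a\prec b(x,\beta)\prec c$, then agreement on enough initial digits forces $a\prec b(x_k,\beta)\prec c$), but calling it ``exactly'' right-continuity slightly overstates the equivalence; only this implication is needed.
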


\subsection{First properties of $K_\beta(t)$ and $E_\beta$}

Let $t \in [0,1)$ be given. Recall the definitions of the survivor set $K_\beta(t) = K_\beta^0(t) \cup K_\beta^+(t)$ from \eqref{eq:11} and \eqref{q:K+0}. We define the corresponding {\em symbolic survivor sets} as the set of all greedy $\beta$-expansions of elements in the sets $\mathcal K_\beta(t)$, $\mathcal K_\beta^0(t)$ and $\mathcal K_\beta^+(t)$ respectively. Lemma~\ref{lem:22} gives the following descriptions:
\begin{equation}\label{eq:23}
\begin{split}
\mathcal K_\beta^+(t) = \,& \{ (x_i)\in \{0,1\}^{\mathbb N}  :  b(t, \beta) \lle \si^n((x_i))\prec \al(\beta) \ \forall \, n \ge 0 \},\\
\mathcal K_\beta^0(t) = \,&  \{ (x_i)\in \{0,1\}^{\mathbb N}  :  \exists \, n \ge 0 \ \, \si^n((x_i))=0^\infty \\
& \hspace{.5cm} \text{ and }  b(t, \beta) \lle \si^k((x_i))\prec \al(\beta) \ \forall \, 0 \le k <n\},\\
\mathcal K_\beta(t) = \, & \mathcal K_\beta^+(t) \cup \mathcal K_\beta^0(t).
\end{split}
\end{equation}
We will often switch from $K_\beta(t)$ to $\mathcal K_\beta(t)$ and back. Note that $K_\beta(t)$ is closed and that $T_\beta$ is continuous when restricted to $K_\beta(t)$. Under the metric $d$ on $\{0,1\}^{\mathbb N}$ given by
\[ d((x_i),(y_i)) = \beta^{-\inf\{ n \ge 1 \, : \, x_n \neq y_n\}},\]
the map $\pi_\beta: (\mathcal K_\beta(t), \sigma) \to (K_\beta(t), T_\beta)$ is a topological conjugacy. This gives that
\[h_{top} (T_\beta | K_\beta(t)) = h_{top} (\mathcal K_\beta(t)).\]

For the bifurcation set $E_\beta$, defined in \eqref{q:ebeta}, the following description can implicitly be found in \cite{Urb86}: 
\begin{proposition}\label{p:EK}
$E_\beta= \set{t\in[0, 1): t \in K_\beta(t) } $ and thus $E_\beta\cap[t, 1)\subseteq  K_\beta(t)$ for any $t \in (0,1)$.
\end{proposition}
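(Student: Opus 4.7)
The plan is to establish the identity $E_\beta = \{t \in [0,1) : t \in K_\beta(t)\}$ by proving the two inclusions separately, exploiting the monotonicity $\epsilon > t \Rightarrow K_\beta(\epsilon) \subseteq K_\beta(t)$ (enlarging the hole can only remove points) together with the closedness and forward-invariance of $K_\beta(t)$. The first observation I would record is that under this monotonicity $t \in E_\beta$ is equivalent to the statement that for every $\epsilon > t$ there exist $x \in K_\beta(t)$ and $n \ge 0$ with $T_\beta^n(x) \in [t, \epsilon)$, since $K_\beta(t) \setminus K_\beta(\epsilon)$ consists exactly of those points whose $T_\beta$-orbit visits $[t, \epsilon)$ without ever entering $(0, t)$.

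For the inclusion $E_\beta \subseteq \{t : t \in K_\beta(t)\}$, I would fix $t \in E_\beta$ and, for each $\epsilon > t$, pick a pair $(x_\epsilon, n_\epsilon)$ as above. Setting $y_\epsilon := T_\beta^{n_\epsilon}(x_\epsilon)$, the forward-invariance of the condition defining $K_\beta(t)$ (the orbit of $x_\epsilon$ avoids $(0,t)$, hence so does the orbit of any of its iterates) gives $y_\epsilon \in K_\beta(t) \cap [t, \epsilon)$. Letting $\epsilon \downarrow t$ forces $y_\epsilon \to t$, and since $K_\beta(t)$ is closed we conclude $t \in K_\beta(t)$. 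The reverse inclusion is almost immediate: if $t \in K_\beta(t)$ and $\epsilon > t$, then for $t > 0$ we have $t \in (0, \epsilon)$, so $t \notin K_\beta(\epsilon)$ while $t \in K_\beta(t)$, giving $K_\beta(\epsilon) \ne K_\beta(t)$; the boundary case $t = 0$ is trivial, since $K_\beta(0) = [0,1)$ strictly contains every $K_\beta(\epsilon)$ with $\epsilon > 0$.

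The \emph{thus} assertion then follows without additional work: for any $s \in E_\beta \cap [t, 1)$ the first part yields $s \in K_\beta(s)$, and since $t \le s$ implies $(0, t) \subseteq (0, s)$, the $T_\beta$-orbit of $s$ also avoids $(0, t)$, whence $s \in K_\beta(t)$. I do not anticipate a serious obstacle here: the only step that requires some care is the compactness argument, where one must verify that the approximating iterates $y_\epsilon$ genuinely lie in $K_\beta(t)$ (not merely in its closure), so that the limit point $t$ inherits membership via closedness. This is immediate from the forward-invariance once one unpacks the definition of $K_\beta(t)$, but it is the single point in the argument that is worth writing out explicitly.
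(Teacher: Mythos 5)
Your proof is correct, but the key inclusion $E_\beta\subseteq\{t: t\in K_\beta(t)\}$ is argued along a genuinely different route from the paper. The paper proves the contrapositive: if $t\notin K_\beta(t)$, then $T_\beta^N(t)\in(0,t)$ for some $N$, and right-continuity of $T_\beta^N$ gives a $\delta>0$ with $T_\beta^N(\epsilon)\in(0,t)\subseteq(0,\epsilon)$ for all $\epsilon\in[t,t+\delta]$; hence $K_\beta(t)\cap[t,t+\delta]=\emptyset$ and $\epsilon\mapsto K_\beta(\epsilon)$ is constant on $[t,t+\delta]$, so $t\notin E_\beta$. You instead argue directly: from $K_\beta(\epsilon)\subsetneq K_\beta(t)$ you extract a witness $y_\epsilon=T_\beta^{n_\epsilon}(x_\epsilon)\in K_\beta(t)\cap[t,\epsilon)$ via forward-invariance, and let $\epsilon\downarrow t$ using closedness of $K_\beta(t)$. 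This is valid; note that you only need closedness under limits from above, which rests on the same right-continuity of the iterates $T_\beta^n$ that the paper invokes (the full closedness of $K_\beta(t)$, asserted earlier in the paper without proof, is slightly more delicate at points whose orbit meets $\frac1\beta$). The trade-off is that the paper's contrapositive argument produces, as a byproduct, the local constancy of $\epsilon\mapsto K_\beta(\epsilon)$ on a right neighbourhood of any $t\notin E_\beta$, which is then reused verbatim to prove that $[0,1)\setminus E_\beta$ is open (Corollary~\ref{c:open}); your approach establishes the proposition itself cleanly but would require the paper's perturbation argument anyway for that corollary. Your treatment of the easy inclusion, of $t=0$, and of the ``thus'' clause matches the paper and is fine.
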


\begin{proof}
For all $t \in (0,1)$ it holds that $t \not \in K_\beta(\epsilon)$ for any $\epsilon >t$. Hence, if $t \in K_\beta(t)$, then $t\in E_\beta$. Suppose that $t \not \in K_\beta(t)$, i.e., there is an $N \ge 1$, such that $T_\beta^N(t)\in (0, t)$. By the right-continuity of $T_\beta^N$, there is a $\de>0$ such that  
\[T_\beta^N(\epsilon)\in\Big(T_\beta^N(t),\frac{ T_\beta^N(t)+t}{2}\Big)\subseteq (0, t)\quad\textrm{for all }\epsilon\in[t, t+\de].\]
 This implies that $K_\beta(t)\cap[t, t+\de]=\emptyset$ and thus, $K_\beta(t+\de) \subseteq K_\beta(t)\subseteq K_\beta(t+\de)$. We conclude that  the function $\epsilon\mapsto K_\beta(\epsilon)$ is constant on $[t, t+\de]$.
\end{proof}

\begin{corollary}\label{c:open}
For each $\beta \in (1,2)$ the set $[0,1) \setminus E_\beta$ is open.
\end{corollary}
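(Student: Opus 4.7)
The plan is to exploit Proposition \ref{p:EK}, which gives the clean characterisation $t_0 \in [0,1) \setminus E_\beta \iff t_0 \notin K_\beta(t_0) \iff$ there exists $N \ge 1$ with $T_\beta^N(t_0) \in (0, t_0)$. I would aim to produce a two-sided open neighbourhood $U \subseteq [0,1)$ of $t_0$ on which the \emph{same} integer $N$ still witnesses $T_\beta^N(s) \in (0,s)$, so that every $s \in U$ satisfies $s \notin K_\beta(s)$ and hence $s \notin E_\beta$.

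The key technical step is a continuity observation for the iterate $T_\beta^N$ at $t_0$. The only discontinuity of $T_\beta:[0,1) \to [0,1)$ is at $1/\beta$, where $T_\beta(1/\beta) = \beta\cdot(1/\beta) - 1 = 0$. By induction, $T_\beta^N$ is continuous at $t_0$ if and only if $T_\beta^j(t_0) \neq 1/\beta$ for every $0 \le j < N$. I claim this holds automatically under our hypothesis: if $T_\beta^j(t_0) = 1/\beta$ for some $j < N$, then $T_\beta^{j+1}(t_0) = 0$, and since $0$ is a fixed point of $T_\beta$ we would get $T_\beta^N(t_0) = 0$, contradicting $T_\beta^N(t_0) \in (0, t_0)$. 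Hence $T_\beta^N$ is continuous at $t_0$ in the usual two-sided sense.

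Setting $y_0 := T_\beta^N(t_0) \in (0, t_0)$, continuity of $T_\beta^N$ at $t_0$ gives a $\delta \in \bigl(0, (t_0-y_0)/3\bigr)$ such that
\[
T_\beta^N(s) \in \left( \tfrac{y_0}{2},\ \tfrac{y_0 + t_0}{2} \right) \qquad \text{for all } s \in (t_0 - \delta,\, t_0 + \delta) \cap [0,1).
\]
For any such $s$ we have $T_\beta^N(s) > 0$ and $T_\beta^N(s) < (y_0+t_0)/2 < t_0 - \delta < s$, so $T_\beta^N(s) \in (0, s)$. By Proposition \ref{p:EK} this forces $s \notin E_\beta$, proving that $(t_0 - \delta,\, t_0 + \delta) \cap [0,1) \subseteq [0,1) \setminus E_\beta$.

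The main thing to watch out for is the potential failure of continuity of $T_\beta^N$ (the paper only records right-continuity of $T_\beta$ via the right-continuity of the greedy expansion, Lemma \ref{lem:22}), which is why the proof of Proposition \ref{p:EK} itself only delivers a one-sided interval $[t, t+\delta]$. The observation that rescues two-sided continuity here is precisely the implication ``$T_\beta^j(t_0) = 1/\beta$ for some $j<N$ forces $T_\beta^N(t_0) = 0$'', which is incompatible with $T_\beta^N(t_0) \in (0, t_0)$; once this is in hand, the argument is a routine continuity estimate.
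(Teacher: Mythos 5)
Your proof is correct and follows essentially the same route as the paper: both reduce to the characterisation $t_0\notin E_\beta \iff T_\beta^N(t_0)\in(0,t_0)$ for some $N$ from Proposition~\ref{p:EK}, observe that the orbit of $t_0$ must avoid $\frac1\beta$ before time $N$ (since hitting $\frac1\beta$ would force $T_\beta^N(t_0)=0$), and deduce two-sided continuity of $T_\beta^N$ at $t_0$ to get an open neighbourhood avoiding $E_\beta$. The only (cosmetic) difference is that you run a single two-sided continuity estimate, whereas the paper reuses the one-sided argument from Proposition~\ref{p:EK} for the right side and adds left-continuity separately.
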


\begin{proof}
Let $t \not \in E_\beta$. The proof of the previous proposition then gives a $\delta_1 >0$, such that $[t, t+\delta_1] \cap E_\beta = \emptyset$. From $t \not \in K_\beta(t)$ it follows that there is an $N \ge 1$, such that $T^N_\beta (t) \in (0,t)$. Hence $T^k_\beta(t) \neq \frac1\beta$ for all $0 \le k \le N$, which means that $T^N_\beta$ is left-continuous in $t$. Then as in the proof of Proposition~\ref{p:EK} we can find a $\delta_2>0$, such that $ [t-\delta_2, t] \cap E_\beta = \emptyset$.
\end{proof}

In \eqref{q:e+} the set $E^+_\beta$ was defined. By the same proof as given for Proposition~\ref{p:EK} we also get that $E^+_\beta$ is the bifurcation set of $K_\beta^+(t)$, i.e.,
\[ E_\beta^+ = \{ t \in [0,1) \, : \, t \in K^+_\beta(t) \} = \{ t \in [0,1) \, : \, K_\beta^+ (\epsilon) \neq K_\beta^+(t) \text{ for any } \epsilon >t\} .\]
Just as for $K_\beta(t)$ we add a third set $E_\beta^0$ of the elements in $E_\beta$ that are pre-images of 0:
\begin{equation*}
E_\beta^0 = \{ t \in E_\beta \, : \, \exists \, n \ge 0 \  \, T^n_\beta (t) =0\} = \{ t \in [0,1) \, : \, t \in K^0_\beta(t) \}.
\end{equation*}
Then $E_\beta = E_\beta^+ \cup E_\beta^0$ and $E_\beta^+ \cap E_\beta^0 = \{0\}$.

\vskip .2cm
The symbolic bifurcation sets, i.e., the sets of all greedy $\beta$-expansions of elements in $ E_\beta$, $E_\beta^+$ and $E_\beta^0$ can be described as follows:
\begin{equation}\label{eq:24}
\begin{split}
\mathcal E_\beta^+ = & \, \set{(t_i) \in \{0,1\}^{\mathbb N}: \forall n \ge 0 \ \, (t_i) \lle \si^n((t_i))\prec \al(\beta)},\\
\mathcal E_\beta^0= &\, \{ (t_i) \in \{0,1\}^{\mathbb N}: \exists n \ge 0 \ \, \si^n((t_i)) =0^\infty \\
& \hspace{.5cm} \text{ and } (t_i) \lle \si^k ((t_i))\prec \al(\beta)\textrm{ for all } 0 \le k <n\},\\
\mathcal E_\beta = & \, \mathcal E_\beta^+ \cup \mathcal E_\beta^0.
\end{split}
\end{equation}

\vskip .2cm
In the series of papers \cite{Rai89,Rai92,Rai94}, Raith studied invariant sets for piecewise monotone expanding maps on the interval $[0,1]$. More specifically, in \cite{Rai94} he removed a finite number of open intervals from $[0,1]$ and considered piecewise monotone expanding maps restricted to the survivor set. He then studied the dependence on the endpoints of the holes of the Hausdorff dimension of the survivor set and of the topological entropy of the map restricted to the survivor set. Since no $x \in [0,1)$ has $T_\beta (x)=1$, we can apply these results to $T_\beta$ on $[0,1)$ with the single hole $(0,t)$ removed. In particular, applying the results from \cite[Corollary 1.1 and Theorem 2]{Rai94} give the following.
\begin{proposition}[\cite{Rai94}]\label{p:Raith}
Let $\beta \in (1,2)$ be given. The maps $H_\beta: t \mapsto h_{top} (\mathcal K_\beta(t))$ and $\eta_\beta: t \mapsto \dim_H K_\beta(t)$ are continuous on $[0,1)$.
\end{proposition}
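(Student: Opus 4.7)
The plan is to reduce the statement to a direct application of Raith's continuity results in \cite{Rai94}, which apply to piecewise monotone expanding interval maps with finitely many open holes removed. The first step is to check that $T_\beta$ with hole $(0,t)$ fits Raith's framework. I would extend $T_\beta$ from $[0,1)$ to a map on the compact interval $[0,1]$ in an arbitrary way, e.g., by defining $T_\beta(1):=0$; this extension does not affect any survivor set $K_\beta(t)\subseteq[0,1)$, precisely because no $x\in[0,1)$ satisfies $T_\beta(x)=1$, as noted in the paper. The resulting map is piecewise linear with constant slope $\beta>1$ on the two monotonicity pieces $[0,\frac1\beta)$ and $[\frac1\beta,1]$, with a single discontinuity at $\frac1\beta$, so it is a piecewise monotone expanding interval map in the sense of Raith. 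The hole is the single open interval $(0,t)$, with its left endpoint $0$ fixed and only the right endpoint $t\in[0,1)$ varying.

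Second, continuity of $H_\beta$ follows from \cite[Corollary 1.1]{Rai94}, which asserts that the topological entropy of the restricted map depends continuously on the endpoints of the hole intervals. Combining this with the topological conjugacy $\pi_\beta:(\mathcal K_\beta(t),\sigma)\to(K_\beta(t),T_\beta)$ established earlier in the section, which gives $h_{top}(\mathcal K_\beta(t))=h_{top}(T_\beta|_{K_\beta(t)})$, yields continuity of $H_\beta$ on $[0,1)$. Continuity of $\eta_\beta$ follows analogously from \cite[Theorem 2]{Rai94}, which gives continuity of the Hausdorff dimension of the survivor set as a function of the hole endpoints. The degenerate case $t=0$ (empty hole) is covered automatically, since one may view an empty hole as an interval with coinciding endpoints.

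The main obstacle is verifying that Raith's technical hypotheses, typically phrased in terms of how the forward orbits of hole endpoints and the critical point $\frac1\beta$ interact, are satisfied uniformly in the parameter $t$, and that the mild mismatch between Raith's compact-interval setting and our half-open domain $[0,1)$ does not cause problems. The mismatch is handled precisely by the observation that $1$ has no $T_\beta$-preimage in $[0,1)$, so the extension of $T_\beta$ to $[0,1]$ is genuinely harmless. The hypothesis check amounts to routine book-keeping within Raith's framework, since the expanding piecewise linear structure and the one-parameter family of holes $(0,t)$ form the simplest admissible setting for his theorems.
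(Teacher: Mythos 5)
Your proposal is correct and matches the paper's treatment: the paper likewise proves this proposition by direct appeal to Raith's \cite[Corollary 1.1 and Theorem 2]{Rai94}, using the same observation that no $x\in[0,1)$ has $T_\beta(x)=1$ to pass from $[0,1)$ to Raith's compact-interval setting. Your version merely spells out the framework check in slightly more detail than the paper does.
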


In the process of proving \cite[Theorem 2]{Rai94}, Raith proved in \cite[Lemma 3]{Rai94} that Bowen's dimension formula also holds in this case, i.e., the Hausdorff dimension of the survivor set is the unique zero of the pressure function. In our setting this translates to the following dimension formula:
\begin{equation}\label{q:dimensionformula}
\dim_H K_\beta(t) = \frac{h_{top}(T_\beta | K_\beta(t))}{\log \beta}.
\end{equation}

Since for any $t \in [0,1)$ the sets $K_\beta^0(t)$ and $E_\beta^0$ contain at most countably many points, we have the following properties for the sets under consideration. Let $\lambda$ denote the one dimensional Lebesgue measure.

\vskip .4cm
\begin{center}
\begin{tabular}{ll}
\hline
$\dim_H K_\beta(t) = \dim_H K_\beta^+(t)$ & $\dim_H K_\beta^0(t)=0$\\
\\
$\lambda(K_\beta(t)) = \lambda(K_\beta^+(t))$& $\lambda(K_\beta^0(t))=0$\\
\\
$\dim_H E_\beta  = \dim_H E_\beta^+ $ & $\dim_H E_\beta^0 =0$\\
\\
$\lambda(E_\beta) = \lambda(E_\beta^+)$ & $\lambda(E_\beta^0)=0$\\
\\
$h_{top} (K_\beta(t)) = \max \{ h_{top} (K_\beta^+(t)),h_{top} (K_\beta^0(t))\} \qquad$ &\\
\hline
\end{tabular}
\end{center}
\vskip .2cm

\noindent This table implies that for Theorem~\ref{main:1} {\em (1)} it is enough to consider only $E_\beta$. From Proposition~\ref{p:Raith} we also get that $t \mapsto \dim_H K_\beta^+(t)$ is continuous and that
\[ h_{top}(\mathcal K_\beta(t)) = \dim_H K_\beta^+(t) \log \beta.\]

The next result specifies the relations between the sets even further.
\begin{proposition}\label{p:htopinE+}
Let $\beta \in (1,2)$ be given. If $t \in E_\beta^+$, then $h_{top} (\K_\beta(t)) = h_{top}(\K_\beta^+(t))$.
\end{proposition}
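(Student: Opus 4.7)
From the table preceding the proposition, $h_{top}(\mathcal K_\beta(t))=\max\{h_{top}(\mathcal K_\beta^+(t)),h_{top}(\mathcal K_\beta^0(t))\}$, and the inclusion $\mathcal K_\beta^+(t)\subseteq\mathcal K_\beta(t)$ automatically yields $h_{top}(\mathcal K_\beta(t))\ge h_{top}(\mathcal K_\beta^+(t))$. The proposition therefore reduces to the entropy inequality
\[
h_{top}(\mathcal K_\beta^0(t))\;\le\;h_{top}(\mathcal K_\beta^+(t)).
\]
My plan is to inject the length-$n$ subwords of $\mathcal K_\beta^0(t)$ into the length-$(n+C)$ subwords of $\mathcal K_\beta^+(t)$ for a uniform constant $C$, and then obtain the inequality by taking logarithms and dividing by $n$.

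The hypothesis $t\in E_\beta^+$ enters through the fact that $b(t,\beta)\in\mathcal K_\beta^+(t)$, equivalently $b(t,\beta)\preceq\sigma^n(b(t,\beta))\prec\alpha(\beta)$ for every $n\ge0$. The set $\mathcal K_\beta^0(t)$ is shift-invariant (immediate from \eqref{eq:23}), so any $u\in\mathcal B_n(\mathcal K_\beta^0(t))$ may be taken as the prefix $u=y_1\ldots y_n$ of a sequence $(y_i)=y_1\ldots y_m0^\infty\in\mathcal K_\beta^0(t)$ with $m\ge n$ chosen minimal, forcing $y_m=1$. The natural candidate for the extension is $z:=y_1\ldots y_m\,b(t,\beta)$, and checking $z\in\mathcal K_\beta^+(t)$ amounts to verifying $b(t,\beta)\preceq\sigma^j(z)\prec\alpha(\beta)$ for every $j\ge0$. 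For $j\ge m$ this is exactly the hypothesis $t\in E_\beta^+$. For $0\le j<m$ the lower bound $\sigma^j(z)\succeq b(t,\beta)$ follows from $y_{j+1}\ldots y_m0^\infty\succeq b(t,\beta)$ (a consequence of $(y_i)\in\mathcal K_\beta^0(t)$) combined with $b(t,\beta)\succeq 0^\infty$.

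The main obstacle is the upper bound $\sigma^j(z)\prec\alpha(\beta)$ for $0\le j<m$ in the boundary situation where $y_{j+1}\ldots y_m$ coincides with the initial segment $\alpha_1(\beta)\ldots\alpha_{m-j}(\beta)$ of $\alpha(\beta)$. The minimality of $m$ gives $y_m=1$, so the shift constraint at $k=m-1$ reads $b(t,\beta)\preceq 10^\infty$, and combined with $\sigma(b(t,\beta))\succeq b(t,\beta)$ coming from $t\in E_\beta^+$ this forces the first digit of $b(t,\beta)$ to be $0$; the Parry-type condition $\sigma^n(\alpha(\beta))\preceq\alpha(\beta)$ from Lemma~\ref{lem:21} then controls the remaining comparison of $b(t,\beta)$ with $\sigma^{m-j}(\alpha(\beta))$. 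In the residual corner sub-cases where the naive concatenation still fails, the plan is to refine the construction by truncating $(y_i)$ at the last coordinate where it deviates from the corresponding prefix of $\alpha(\beta)$ and then appending $b(t,\beta)$ there, at a uniformly bounded cost in word length. Once such an extension is produced for every $u$, the injection $\mathcal B_n(\mathcal K_\beta^0(t))\hookrightarrow\mathcal B_{n+C}(\mathcal K_\beta^+(t))$ is in place and taking logarithms divided by $n$ yields the desired entropy inequality.
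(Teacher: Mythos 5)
Your reduction to $h_{top}(\K_\beta^0(t))\le h_{top}(\K_\beta^+(t))$ is the right one, your lower-bound check for the naive concatenation $z=y_1\ldots y_m\,b(t,\beta)$ is correct (indeed $\si^j(z)\lge y_{j+1}\ldots y_m0^\f\lge b(t,\beta)$), and you have correctly located the real obstacle: the upper bound when $y_{j+1}\ldots y_m=\al_1(\beta)\ldots\al_{m-j}(\beta)$. But your proposed resolutions of that obstacle fail. Knowing $t_1=0$ together with $\si^{m-j}(\al(\beta))\lle\al(\beta)$ does \emph{not} control the comparison of $b(t,\beta)$ with $\si^{m-j}(\al(\beta))$: it can happen that $\si^{m-j}(\al(\beta))\lle b(t,\beta)$ (this is precisely the situation of Lemma~\ref{lem:39}), and then $\si^j(z)\succ\al(\beta)$. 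Concretely, take $\al(\beta)=(1100)^\f$ and $b(t,\beta)=(01)^\f$ (one checks $t\in E_\beta^+$); for $(y_i)=0110^\f\in\K_\beta^0(t)$ one gets $z=011(01)^\f$ and $\si(z)=1101\ldots\succ 1100\ldots=\al(\beta)$.

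The fallback of ``truncating at the last deviation from $\al(\beta)$ and appending $b(t,\beta)$ at uniformly bounded cost'' cannot be repaired, because the target injection $\B_n(\K_\beta^0(t))\hookrightarrow\B_{n+C}(\K_\beta^+(t))$ with a uniform constant $C$ does not exist in general. In the example above, Lemma~\ref{lem:39} gives $\K_\beta^+(t)=\set{(x_i):(01)^\f\lle\si^n((x_i))\lle(10)^\f\ \forall n}=\set{(01)^\f,(10)^\f}$, so $\#\B_{n+C}(\K_\beta^+(t))=2$ for all $n$, while $\B_n(\K_\beta^0(t))$ contains the words $(01)^j110^{n-2j-2}$ for every admissible $j$ and hence grows linearly in $n$; since $11$ never occurs in $\K_\beta^+(t)$, no bounded-length modification of such a word can land in $\B(\K_\beta^+(t))$. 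The counting must therefore tolerate a subexponential correction factor, which is what the paper does: a witness $x_1\ldots x_{j-1}10^\f$ is encoded by the index $j$ together with a word of $\K_\beta^+(t)$ of length at most $j-1$, obtained by flipping the final $1$ to a $0$ (which makes every window \emph{strictly} below the corresponding prefix of $\al(\beta)$, fixing your upper-bound problem once and for all) and then splicing in the tail of $b(t,\beta)$ at the first position where a prefix of $b(t,\beta)$ is matched (to salvage the lower bound that the flip destroys). This yields $\#\B_k(\K_\beta^0(t))\le\sum_{j=1}^{k+1}\#\B_{j-1}(\K_\beta^+(t))\le(k+1)\#\B_k(\K_\beta^+(t))$, and the factor $k+1$ vanishes after taking $\frac1k\log$, which is all the entropy inequality needs.
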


\begin{proof}
Since $\K_\beta^+(t)\subseteq \K_\beta(t)$, it suffices to prove $h_{top}(\K_\beta^+(t))\ge h_{top}(\K_\beta(t))$. For $t=0$, there is nothing to prove. Take $t\in E_\beta^+\setminus\set{0}$ and write $(t_i):=b(t,\beta)$. Then 
\[(t_i)\lle\si^n((t_i))\prec \al(\beta)\quad\textrm{for all }n\ge 0.\]
Hence $(t_i)$ does not end with $0^\f$ and by (\ref{eq:23}) we can rewrite $\K_\beta^0(t)$ as
 \begin{equation}\label{eq:jh-1}
 \K_\beta^0(t)=\{(x_i) \, : \, \exists \, n\ge 0 \ \si^n((x_i))=0^\f \text{ and }(t_i)\prec\si^k((x_i))\prec \al(\beta) \; \forall \, 0\le k<n \}.
 \end{equation}
We claim that 
 \begin{equation*}
 |\B_k(\K_\beta^0(t))|\le\sum_{j=1}^{k+1}|\B_{j-1}(\K_\beta^+(t))|.
 \end{equation*}
Take a word $a_1\ldots a_k\in \B_k(\K_\beta^0(t))$ and without loss of generality suppose it occurs as a prefix of  a sequence $(x_i)\in \K_\beta^0(t)$, i.e., $(x_i)=a_1\ldots a_k x_{k+1}x_{k+2}\ldots$. Let $j\ge 0$ be such that $x_j=1$ and the tail $x_{j+1}x_{j+2}\ldots =0^\f$. If $j=0$, then $(x_i)=0^\f$. Avoiding this trivial case we assume $j\ge 1$, and we will prove $x_1\ldots x_{j-1}0\in \B_{j}(\K_\beta^+(t))$.
 By (\ref{eq:jh-1}) it follows that 
 \begin{equation}\label{eq:hx-1}
 t_1\ldots t_{j-i}\lle x_{i+1}\ldots x_{j-1}0\prec \al_1(\beta)\ldots \al_{j-i}(\beta)\quad\textrm{for all }0\le i<j.
 \end{equation}
 Let $i^*\le j$ be the smallest index such that $x_{i^*+1}\ldots x_{j-1}0=t_1\ldots t_{j-i^*}$. If strict inequalities in (\ref{eq:hx-1}) hold for all $i<j$, then we put $i^*=j$. Note that $(t_i)\lle \si^n((t_i))\prec \al(\beta)$ for all $n\ge 0$. Then by the minimality of $i^*$ it follows that 
 \[
 x_1\ldots x_{j-1}0 t_{j-i^*+1}t_{j-i^*+2}\ldots=x_1\ldots x_{i^*}t_1t_2\ldots \in\K_\beta^+(t).
 \]
 Observe that $x_1\ldots x_{j-1}=a_1\ldots a_{j-1}$ if $j\le k$, and $x_1\ldots x_k=a_1\ldots a_k$ if $j\ge k+1$. This implies that $a_1\ldots a_{j-1}=x_1\ldots x_{j-1}\in \B_{j-1}(\K_\beta^+(t))$ if $j \le k$ or $a_1 \ldots a_k \in \mathcal B_k(\K_\beta^+(t))$ if $j \ge k+1$ and proves the claim.
 
 \vskip .2cm
 By the claim it follows that $|\B_k(\K_\beta^0(t))|\le (k+1)|\B_k(\K_\beta^+(t))|$. Using that $\K_\beta(t)=\K_\beta^0(t)\cup\K_\beta^+(t)$ we have 
 \[
 |\B_k(\K_\beta(t))|\le (k+2)|\B_k(\K_\beta^+(t))|\quad\textrm{for all }k\ge 1.
 \]
 Taking the logarithms, dividing both sides by $k$ and letting $k \to \infty$, we conclude that $h_{top}(\K_\beta(t))\le h_{top}(\K_\beta^+(t))$, which gives the result.
\end{proof}

\subsection{The size of $E_\beta$}
The results from the previous sections are enough to prove Theorem~\ref{main:1}. We start by proving the following result, which holds for all $\beta \in (1,2)$. It covers item {\em (1)} from Theorem~\ref{main:1} as well as part of Theorem~\ref{main:2}.

\begin{proposition}\label{prop:24}
For any $\beta\in(1,2)$ the bifurcation set $E_\beta$ is a Lebesgue null set. Furthermore, $\dim_H (E_\beta\cap[0, \de])=1$ for any $\de>0$. In particular, $\dim_H E_\beta =1$.
\end{proposition}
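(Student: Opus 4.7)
The proposition splits into two assertions, which I plan to handle separately.

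For the Lebesgue null claim, the plan is to combine Proposition~\ref{p:EK} with the ergodicity of the Parry measure $\mu_\beta$. By Proposition~\ref{p:EK}, $E_\beta \cap [t, 1) \subseteq K_\beta(t)$ for every $t \in (0,1)$, so it suffices to show $\lambda(K_\beta(t)) = 0$ for every $t > 0$. Since $\mu_\beta$ is an ergodic $T_\beta$-invariant measure equivalent to Lebesgue on $[0,1)$ satisfying $\mu_\beta((0,t)) > 0$, Birkhoff's ergodic theorem ensures that for $\lambda$-almost every $x \in [0,1)$ the orbit $\{T_\beta^n(x)\}$ visits $(0,t)$; hence $\lambda(K_\beta(t)) = 0$, and $\lambda(E_\beta) = \lim_{t \downarrow 0} \lambda(E_\beta \cap [t,1)) = 0$.

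For the dimension claim, fix $\delta > 0$ and $\epsilon > 0$; the goal is to construct a subset of $E_\beta^+ \cap [0, \delta]$ of Hausdorff dimension at least $1 - \epsilon$. By the continuity of $\eta_\beta$ at $0$ (Proposition~\ref{p:Raith}) combined with \eqref{q:dimensionformula}, one can choose $s \in (0, \delta/\beta)$ with $h_{top}(\mathcal K_\beta(s)) > (1-\epsilon)\log \beta$. Fix $N$ with $\beta^{-N+1} < \delta$ and $s < \beta^{-N}$, so that every admissible sequence beginning with $0^N$ represents a point in $[0, \delta]$. The plan is then a Moran-type construction with Lyndon words: by the standard Lyndon count inside the subshift $\mathcal K_\beta(s)$, for all sufficiently large $m$ there exist at least $K \ge \beta^{m(1-\epsilon)}/m$ admissible Lyndon words $v_1 \prec v_2 \prec \cdots \prec v_K$ of length $m$ beginning with the prefix $0^N 1$. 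Iterating Duval's classical lemma, any concatenation $v_{a_1} v_{a_2} v_{a_3} \cdots$ with $a_1 \le a_2 \le \cdots$ is a non-decreasing Lyndon concatenation, hence shift-minimal and $\Sigma_\beta$-admissible, so it gives an element of $\mathcal E_\beta^+$ beginning with $0^N$ and thus a point in $E_\beta^+ \cap [0, \delta]$.

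The main obstacle is converting this combinatorial abundance into a Hausdorff dimension lower bound, since the set of shift-minimal sequences is not shift-invariant and the dimension formula \eqref{q:dimensionformula} does not apply directly. My plan is to endow the Cantor set of non-decreasing index sequences $(a_i) \in \{1, \ldots, K\}^{\mathbb N}$ with a Bernoulli-type measure (for example, by assigning equal weight to each of the $\binom{n+K-1}{K-1}$ non-decreasing length-$n$ prefixes), push it forward under the concatenation map followed by $\pi_\beta$, and apply the mass distribution principle. Since cylinders of depth $nm$ have diameter comparable to $\beta^{-nm}$ in $[0,1)$, the per-length exponential growth rate of the non-decreasing prefixes — which tends to $(1-\epsilon)\log\beta$ as $m \to \infty$ and $K \sim \beta^{m(1-\epsilon)}/m$ — yields $\dim_H(E_\beta^+ \cap [0,\delta]) \ge 1 - \epsilon$. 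Letting $\epsilon \to 0$ together with the trivial upper bound $\dim_H \le 1$ gives the claim.
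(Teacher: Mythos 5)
Your argument for the Lebesgue null part is correct and is essentially the paper's own: Proposition~\ref{p:EK} plus ergodicity of the measure equivalent to Lebesgue gives $\lambda(K_\beta(t))=0$ for every $t>0$, hence $\lambda(E_\beta)=0$. The dimension part, however, contains a fatal error. The set of non-decreasing sequences $(a_i)\in\{1,\dots,K\}^{\mathbb N}$ is not a Cantor set: a non-decreasing sequence taking values in a finite set is eventually constant, so this index set is \emph{countable}, and consequently the set of concatenations $v_{a_1}v_{a_2}\cdots$ you produce is countable and has Hausdorff dimension $0$. This is visible in your own count: $\binom{n+K-1}{K-1}$ grows only polynomially in $n$ for fixed $K$ and $m$, so the exponential growth rate per symbol is $0$, not $(1-\epsilon)\log\beta$; the quantity $\frac1m\log K$ is only the first-level count and does not control the asymptotics that determine dimension. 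The difficulty is intrinsic to your strategy: imposing shift-minimality via non-decreasing Lyndon concatenations over a finite alphabet of blocks can never yield an uncountable set. There are also two secondary gaps you would need to close even if the counting worked: (i) the $\beta$-shift $\Sigma_\beta$ is not closed under concatenation of admissible words (a block ending in a long prefix of $\al(\beta)$ followed by $0^N1\cdots$ can violate $\si^n((x_i))\prec\al(\beta)$ when $\al(\beta)$ contains long runs of zeros), so ``$\Sigma_\beta$-admissible'' is not automatic; and (ii) the lower bound $K\ge\beta^{m(1-\epsilon)}/m$ for admissible Lyndon words with the prescribed prefix $0^N1$ is asserted, not proved.

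The paper avoids all of this by not imposing shift-minimality combinatorially. It takes $E_{\beta,N}$ to be the set of $x$ whose greedy expansion begins with $0^N$ and whose tail contains no $N$ consecutive zeros. For such $x$ one has $T^n_\beta(x)\ge x$ for all $n$ automatically (the point itself is smaller than $\beta^{-N}$ while every iterate is at least $\beta^{-N}$ or trivially $\ge x$), so $E_{\beta,N}\subseteq E_\beta$, and moreover $E_{\beta,N}=\beta^{-N}K^+_\beta(\beta^{-N})$ is an affine copy of a genuine survivor set. Hence $\dim_H E_{\beta,N}=\dim_H K_\beta(\beta^{-N})$, and letting $N\to\infty$ and invoking the continuity of $\eta_\beta$ at $0$ (Proposition~\ref{p:Raith}) gives $\dim_H(E_\beta\cap[0,\de])\ge\dim_H K_\beta(0)=1$. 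If you want to salvage your approach, you should replace the shift-minimality requirement by a condition of this ``local'' type that defines a positive-entropy subshift.
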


\begin{proof}
For the first part of the statement, let $\beta\in(1,2)$ and $N \in \N$. The ergodicity of $T_\beta$ with respect to its invariant measure equivalent to the Lebesgue measure $\lambda$ implies that $\lambda$-a.e.~$x \in [0,1)$ is eventually mapped into the interval $\big(0, \frac1{N}\big)$.  Hence, the survivor set $K_\beta \big(\frac1{N}\big)$ is a Lebesgue null set for each $N \in \N$. This implies that $\lambda(E_\beta)=0$, since by Proposition~\ref{p:EK}
 \[
 E_\beta \subseteq \bigcup_{N=1}^\f K_\beta \Big(\frac1{N} \Big).
 \] 
 
 \vskip .2cm
To prove the second part, take a large integer $N\ge 1$. Let $E_{\beta, N}$ be the set of  $x \in [0,1)$ with a greedy expansion $b(x, \beta)=(b_i(x, \beta))$ satisfying $b_1(x, \beta) \ldots b_N(x, \beta)=0^N$ and such that the tails $b_{N+1}(x, \beta)b_{N+2}(x, \beta)\ldots$ do not contain $N$ consecutive zeros. It immediately follows that  $E_{\beta, N} \subseteq E_\beta$. Note that $K_{\beta}^+\big( \frac1{\beta^N} \big)$ is exactly the set of   $x \in [0,1)$ for which $b(x, \beta)$ does not have more than $N$ consecutive zeros. Hence, 
\[E_{\beta,N} = \frac1{\beta^N} K_{\beta}^+\left( \frac1{\beta^N} \right)\]
and thus $\dim_H E_{\beta,N}  = \dim_H K_{\beta}^+\big( \frac1{\beta^N} \big) = \dim_H K_{\beta}\big( \frac1{\beta^N} \big)$. Moreover, for any $\delta >0$ we can find a large integer $N$, such that $E_{\beta,n} \subseteq E_\beta \cap [0, \delta]$ for all $n \ge N$. Therefore,
  \[
  \dim_H (E_\beta \cap [0, \delta])  \ge \dim_H E_{\beta,n} =\dim_H K_{\beta}\Big( \frac1{\beta^n} \Big) 
    \]
for all $n \ge N$. By continuity of the map $\eta_\beta: t\mapsto  \dim_H K_\beta(t)$, letting $n\ra\f$ gives that 
\[ \dim_H (E_\beta \cap [0, \delta]) \ge  \dim_H K_\beta(0) =\dim_H [0, 1) =1. \qedhere\]
\end{proof}

\begin{proof}[Proof of Theorem~\ref{main:1}]
Item {\em (1)} is given by Proposition~\ref{prop:24}. For item {\em (2)}, the first bullet point is immediately clear. The fact that $\eta_\beta$ decreases immediately follows from its definition and the continuity of $\eta_\beta$ is given by Proposition~\ref{p:Raith}. Finally, the set-valued map $t \mapsto K_\beta(t)$ is locally constant Lebesgue almost everywhere, since $\lambda(E_\beta)=0$.
\end{proof}

\section{Topological structure  of $E_\beta$ }\label{sec:3}
In this section we prove Theorem~\ref{main:2}. In fact, we prove a stronger result by specifying the  set of $\beta \in (1,2)$ for which there is a $\delta>0$, such that $E_\beta^+\cap[0,\delta]$ does not contain isolated points. This is the set
\begin{equation}\label{q:c3} 
C_3:=\set{\beta\in(1,2):   \textrm{   the length of consecutive zeros in } \al(\beta)\textrm{ is bounded}}.
\end{equation}
From a dynamical point of view $C_3$ is the set of $\beta\in(1, 2)$ such that the orbit $\{\widetilde T_\beta^n(1)\}_{n=0}^\f$ is bounded away from zero. Replacing $\alpha(\beta)$ in the definition of $C_3$ by $b(1, \beta)$ gives the set called $C_3$ in \cite{Sch97}. In \cite{Sch97} Schmeling proved that this set has zero Lebesgue measure and full Hausdorff dimension. Since the two versions of $C_3$ only differ by countably many points, the same holds for our set $C_3$ from (\ref{q:c3}). We prove Theorem~\ref{main:2} using Lyndon words, which we will define next.

\subsection{Lyndon words}

Recall from \eqref{eq:24} that
\[
\E_\beta^+ =\set{(t_i)\in\set{0, 1}^\N: (t_i)\lle \si^n((t_i))\prec\al(\beta)\textrm{ for all }n\ge 0}.
\]
In other words, any sequence in $\E_\beta^+$ is the lexicographically smallest sequence in $\Sigma_\beta$ under the shift map $\si$. For this reason we recall the following definition (cf.~\cite{Lot02}).
\begin{definition}\label{def:33}
 A word $\s$ is called \emph{Lyndon} if  $\s$ is aperiodic and $\si^n( \s^\f)\lge \s^\f$ for all $n\ge 0$.
\end{definition}

The following  lemma lists some useful properties of Lyndon words. The first item easily follows and is even taken as the definition of Lyndon words in \cite{CT}. We omit the proof.

\begin{lemma}\label{lem:34}\mbox{}
\begin{itemize}
\item[{\rm (i)}]  $s_1\ldots s_m$ is a   Lyndon word if and only if 
\[ s_{i+1}\ldots s_m\succ s_1\ldots s_{m-i}\quad\textrm{for all}\quad  0<i<m.\]

\item[{\rm(ii)}]  If $s_1\ldots s_m$ is a  Lyndon word, then for any $1\le n<m$ with $s_n=0$ the word $s_1\ldots s_{n}^+$ is also    Lyndon.
\end{itemize}
\end{lemma}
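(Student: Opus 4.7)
My plan for (i) is to unpack the Lyndon definition in both directions. The $(\Leftarrow)$ implication is the easier half: aperiodicity of $\s$ follows by taking $i$ equal to the length of any hypothetical proper period, which would force the two length-$(m-i)$ words to coincide and violate the strict inequality; and $\si^n(\s^\f)\succ \s^\f$ for $0<n<m$ is visible from the first $m-n$ coordinates, with $n\ge m$ handled by the $m$-periodicity of $\s^\f$. For $(\Rightarrow)$, I would start from the strict inequality $\si^i(\s^\f)\succ \s^\f$ (strict because $\s$ is aperiodic and $0<i<m$) and read off the first $m-i$ coordinates. The one subtle case is the equality $s_{i+1}\ldots s_m = s_1\ldots s_{m-i}$; to rule it out I would observe the two identities
\[
\si^i(\s^\f)=s_1\ldots s_{m-i}\cdot \s^\f,\qquad \s^\f=s_1\ldots s_{m-i}\cdot \si^{m-i}(\s^\f),
\]
which turn $\si^i(\s^\f)\succ \s^\f$ into the equivalent statement $\si^{m-i}(\s^\f)\prec \s^\f$, contradicting the Lyndon property at the shift $m-i$.

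For (ii) my plan is to apply (i) as a working criterion to $w:=s_1\ldots s_{n-1}1$. A useful preliminary remark is that any Lyndon word of length at least $2$ starts with $0$: this follows from (i) at $i=m-1$, giving $s_m\succ s_1$ in $\set{0,1}$, so $s_1=0$. That immediately handles the boundary shift $j=n-1$ of $w$, where the required inequality reduces to $1\succ s_1=0$. For $1\le j\le n-2$ I would invoke the Lyndon inequality $s_{j+1}\ldots s_m\succ s_1\ldots s_{m-j}$ for $\s$ and locate the smallest index $k$ at which the two words first differ. If $k<n-j$, the strict inequality is inherited by the length-$(n-j)$ truncation $s_{j+1}\ldots s_{n-1}1\succ s_1\ldots s_{n-j}$. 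If $k=n-j$, the first difference would give $s_n>s_{n-j}$, impossible because $s_n=0$. If $k>n-j$, then $s_{j+1}\ldots s_n=s_1\ldots s_{n-j}$, so in particular $s_{n-j}=s_n=0$, and the final coordinate $w_n=1$ of $w$ produces the required strict inequality at position $n-j$.

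The only delicate step is the equality case in the forward direction of (i); once the $\si^i$ versus $\si^{m-i}$ symmetry is spotted the rest is bookkeeping, and the hypothesis $s_n=0$ in (ii) is precisely what rules out the one sub-case that would otherwise obstruct the argument. Aperiodicity of $w$ in (ii) then comes for free from (i), since the strict inequalities at every shift prevent $w$ from being a proper power.
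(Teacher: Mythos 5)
Your proof is correct and matches the paper's approach: for (ii) the paper also reduces to the criterion in (i) via the chain $s_{j+1}\ldots s_n^+\succ s_{j+1}\ldots s_n\lge s_1\ldots s_{n-j}$, which is exactly your three-case analysis of where the first difference occurs, written more compactly. The paper omits the proof of (i) as standard, and your argument for it (including the $\si^i$ versus $\si^{m-i}$ symmetry to rule out the equality case) is the usual one.
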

\begin{proof}
To prove (ii), suppose  $s_n=0$ for some $1\le n< m$. Since 1 is a Lyndon word, the statement holds for $n=1$. If $2\le n< m$, then by (i) it follows that
\[
s_{i+1}\ldots s_{n}^+\succ s_{i+1}\ldots s_n\lge s_1\ldots s_{n-i}\quad\textrm{for all}\quad 0<i<n.
\]
Therefore, again  by (i) $s_1\ldots s_{n}^+$ is a  Lyndon word as required. 
\end{proof}

By taking $i=m-1$ in Lemma \ref{lem:34} (i) it follows that  $s_1=0$ and $s_m=1$. So any  Lyndon word of length at least two starts with 0 and ends with 1. We use Lemma \ref{lem:34} to show that any isolated point in $E_\beta^+$ has a periodic greedy $\beta$-expansion. 
 
\begin{proposition}\label{prop:36}
Let $\beta\in(1,2]$. If  $t$ is an isolated point of $E_\beta^+$, then its greedy $\beta$-expansion $b(t, \beta)$ is periodic. Moreover, no element from $E_\beta^+$ is isolated in $E_\beta$.
\end{proposition}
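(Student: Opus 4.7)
The plan is to build approximations of $(t_i):=b(t,\beta)$ inside $\mathcal{E}_\beta=\mathcal{E}_\beta^+\cup\mathcal{E}_\beta^0$ by exploiting Lyndon prefixes, treating the two claims of the proposition via slightly different constructions.

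For the first claim I prove the contrapositive: if $(t_i)$ is aperiodic, then $t$ is a limit point of $E_\beta^+$. Aperiodicity combined with $(t_i)\in\mathcal{E}_\beta^+$ strengthens the shift-admissibility to $\sigma^n((t_i))\succ(t_i)$ strictly for every $n\ge 1$. The key combinatorial ingredient to invoke is that under this hypothesis $t_1\ldots t_N$ is a Lyndon word, equivalently border-free, for infinitely many $N$. For each such $N$ I define
\[
(t^{(N)}_i) \;:=\; t_1 t_2\ldots t_N\cdot t_1 t_2 t_3\ldots,
\]
prepending the prefix to the whole of $(t_i)$. Three verifications are needed: $(t^{(N)}_i)\prec(t_i)$ strictly (the sequences agree on the first $N$ positions, and $\sigma^N((t_i))\succ(t_i)$ produces the discrepancy later); $\sigma^n((t^{(N)}_i))\lge(t^{(N)}_i)$ for all $n\ge 0$ (for $n\ge N$ this follows from $(t^{(N)}_i)\prec(t_i)\lle\sigma^{n-N}((t_i))$, while for $1\le n<N$ the border-freeness of $t_1\ldots t_N$ forces the first discrepancy of $\sigma^n((t_i))$ and $(t_i)$ to lie within the first $N-n$ coordinates, giving a strict $\succ$); and $\sigma^n((t^{(N)}_i))\prec\alpha(\beta)$ (inherited from $\sigma^n((t^{(N)}_i))\lle\sigma^n((t_i))\prec\alpha(\beta)$). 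Hence $(t^{(N)}_i)\in\mathcal{E}_\beta^+$, and the numbers $t^{(N)}:=\pi_\beta((t^{(N)}_i))$ approach $t$ from below.

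For the second claim the aperiodic case is handled by the first since $E_\beta^+\subseteq E_\beta$, so I assume $(t_i)=s^\infty$ is periodic, with $s=s_1\ldots s_m$ necessarily Lyndon (shift-admissibility forces the period itself to be the smallest rotation). I approach $t$ from above by pre-images of $0$: using Lemma~\ref{lem:34}(ii) I pick an index $l$ with $s_l=0$, form the Lyndon word $u:=s_1\ldots s_l^+$, and set $(t^{(n)}_i):=s^n\,u\,0^\infty$; the last $1$ falls at position $nm+l$, so this is a candidate element of $\mathcal{E}_\beta^0$. Self-admissibility $\sigma^k((t^{(n)}_i))\lge(t^{(n)}_i)$ for $0\le k<nm+l$ splits into shifts cutting through a full $s$-block (where border-freeness of $s$ yields a strict $\succ$ within the first $m-j$ coordinates) and shifts that land inside the tail $u\,0^\infty$ (where either the Lyndon comparison of $s$ against its shifts produces the discrepancy inside the first $l$ coordinates, or the inequality $u_l=1>0=s_l$ provides it at coordinate $l$; the choice $s_l=0$ blocks the would-be bad scenario because $s_{j'+q^*}=1\ne 0=s_l$ rules out $q^*=l-j'$). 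The greedy condition $\sigma^k((t^{(n)}_i))\prec\alpha(\beta)$ is inherited from $s^\infty\in\mathcal{E}_\beta^+$ provided $l$ is chosen large enough that the added $1$ at position $nm+l$ sits before the shifts $\sigma^j(s^\infty)$ have synchronised with $\alpha(\beta)$; when the naive $l=1$ fails (e.g.\ the golden ratio with $s=001$, where $\alpha(\beta)=(10)^\infty$ agrees with $\sigma^2(s^\infty)$ past the first position), one passes to the next available $l$ (here $l=2$, $u=01$). This exhibits points of $E_\beta^0\subseteq E_\beta$ accumulating at $t$ from above.

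The main obstacle is the combinatorial input used in the first claim: that any strictly shift-minimal aperiodic sequence has infinitely many Lyndon prefixes. This is delicate because long borders can appear in low-complexity sequences (e.g.\ Sturmian), and one must rule out that every sufficiently long prefix of $(t_i)$ has a non-trivial border. A secondary difficulty is the $\beta$-dependent choice of $l$ in the second claim, which may need to be larger than $1$ to secure greedy admissibility.
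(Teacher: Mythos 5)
Your overall architecture coincides with the paper's: the first claim is proved in contrapositive form via Lyndon prefixes of $b(t,\beta)$, and the second by appending to $s^n$ a prefix of $s$ with its last $0$ raised to a $1$, padded by $0^\infty$, to produce elements of $\mathcal E_\beta^0$ accumulating at $t$ from above. Your particular approximating sequences (prepending $t_1\ldots t_N$ to all of $b(t,\beta)$ rather than periodising the prefix; a flexible cut index $l$ rather than the index $j$ determined by the maximal cyclic permutation) are workable variants, and the admissibility checks you sketch for them are essentially correct. However, there are two genuine gaps, both located exactly where you flag difficulties. The first: the assertion that an aperiodic sequence in $\mathcal E_\beta^+$ has infinitely many Lyndon prefixes is not an off-the-shelf fact you can invoke --- it is the content of Lemma~\ref{lem:38}, whose proof (an inductive choice of cut points $m_j$, using Lemma~\ref{lem:37} to force $m_j$ to increase strictly) is the bulk of the work behind the first claim. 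Without it your first claim is simply unproven, and, as you concede, border-freeness considerations alone do not settle the matter.

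The second gap concerns the greedy condition $\si^k(s^n u\,0^\infty)\prec\al(\beta)$. You observe correctly that $l=1$ can fail and propose to ``pass to the next available $l$'', but you never show that some admissible $l$ with $s_l=0$ exists for every $\beta$; a priori every candidate could fail. What rescues the construction is a fact you do not establish: writing $a_1\ldots a_m$ for the maximal cyclic permutation of $s$, one has the \emph{strict} inequality $a_1\ldots a_m\prec\al_1(\beta)\ldots\al_m(\beta)$ (if equality held, iterating $\si^m(\al(\beta))\lle\al(\beta)$ against $(a_1\ldots a_m)^\infty\prec\al(\beta)$ would force $\al(\beta)=(a_1\ldots a_m)^\infty$, a contradiction). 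This guarantees that each shift $\si^i(s^\infty)$ separates from $\al(\beta)$ at some position $d_i\le m$; the only dangerous shifts of $s^nu0^\infty$ are those entering the last $s$-block, and they are harmless precisely when $l\ge d_i-(m-i)$. Since whenever $d_i>m-i$ the digit of $\si^i(s^\infty)$ at position $d_i$ is $s_{d_i-m+i}=0$, the choice $l:=\max_i\left(d_i-m+i\right)$ (or $l=1$ if no $i$ gives a positive value) automatically satisfies $s_l=0$ and works for all shifts simultaneously. Some argument of this kind --- or the paper's cleaner route of fixing $l=j$ and deducing admissibility directly from the strict inequality above --- is required; as written, your second construction is verified only on the golden-ratio example.
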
 
 
The proof of this proposition is based on the following two lemmas. Together they say that any point in $E_\beta^+$ with aperiodic $\beta$-expansion can be approximated from below by a sequences of points in $E_\beta^+$ that have a periodic orbit under $T_\beta$.
\begin{lemma}\label{lem:37}
Let $(t_i)\in\E_\beta^+$ be an aperiodic sequence. Then for each $m\ge 1$ we have
\[
(t_1\ldots t_m)^\f\prec (t_i)\quad\textrm{and}\quad (t_1\ldots t_m)^\f\in\Sigma_\beta.
\]
\end{lemma}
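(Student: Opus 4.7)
The plan is to set $(s_i) := (t_1 \ldots t_m)^\f$ and derive both assertions from a single strict inequality extracted from aperiodicity. Since $(t_i) \in \E_\beta^+$ one has $(t_i) \lle \si^m((t_i))$, and equality here would force $(t_i)$ to be periodic of period $m$, contradicting the hypothesis. Hence $(t_i) \prec \si^m((t_i)) = t_{m+1} t_{m+2} \ldots$, so there is a smallest index $j_0 \ge 1$ with $t_1 \ldots t_{j_0 - 1} = t_{m+1} \ldots t_{m + j_0 - 1}$ and $t_{j_0} < t_{m + j_0}$. This single index $j_0$ will drive the rest of the argument.

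For the claim $(s_i) \prec (t_i)$, I would compare the two sequences coordinate by coordinate. On positions $1 \le i \le m$ they agree by construction, and for $i = m + j$ with $j \ge 1$ periodicity of $(s_i)$ gives $s_i = t_j$, while $t_i = t_{m+j}$. By the choice of $j_0$ the first disagreement occurs at $i = m + j_0$, where $s_{m + j_0} = t_{j_0} < t_{m + j_0}$, so $(s_i) \prec (t_i)$. For $(s_i) \in \Sigma_\beta$, periodicity of $(s_i)$ reduces the task to verifying $\si^n((s_i)) \prec \al(\beta)$ for $0 \le n \le m-1$. The case $n = 0$ is immediate from the first claim combined with $(t_i) \prec \al(\beta)$, which is the $n=0$ instance of the $\E_\beta^+$ condition. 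For $1 \le n \le m-1$ I would compare $\si^n((s_i))$ with $\si^n((t_i))$ directly: both sequences read $t_{n+1} \ldots t_m$ on their first $m - n$ coordinates, and by the \emph{same} index $j_0$ as above, the first subsequent disagreement occurs at position $m - n + j_0$, where $\si^n((s_i))$ reads $s_{m + j_0} = t_{j_0}$ and $\si^n((t_i))$ reads $t_{m + j_0}$. Hence $\si^n((s_i)) \prec \si^n((t_i)) \prec \al(\beta)$, the last inequality being exactly the $\E_\beta^+$-condition applied at shift $n$.

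There is no real obstacle beyond index bookkeeping; the whole argument is powered by the one strict inequality $(t_i) \prec \si^m((t_i))$ obtained from aperiodicity. The slightly delicate point to check is that the witness $j_0$ is shift-independent: because the periodic extension reintroduces the block $t_1 t_2 \ldots$ immediately after $t_{n+1} \ldots t_m$ at every shift, the comparison between $\si^n((s_i))$ and $\si^n((t_i))$ always reduces, after $m - n$ forced agreements, to the same comparison between $t_1 t_2 \ldots$ and $t_{m+1} t_{m+2} \ldots$ that produced $j_0$ in the first place.
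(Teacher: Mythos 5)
Your proof is correct and takes essentially the same route as the paper's: both parts rest on the strict inequality $(t_i)\prec\sigma^m((t_i))$ forced by aperiodicity together with the $\mathcal{E}_\beta^+$-condition, and admissibility of $(t_1\ldots t_m)^\infty$ is obtained by comparing $\sigma^n((t_1\ldots t_m)^\infty)$ with $\sigma^n((t_i))\prec\alpha(\beta)$. The only cosmetic difference is that you localise the comparison at the single first-disagreement index $j_0$ of $(t_i)$ versus $\sigma^m((t_i))$, whereas the paper chains the inequalities $\sigma^{km}((t_i))\succcurlyeq(t_i)$ over $k=1,2,\ldots$; both are valid.
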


\begin{proof}
Let $(t_i)\in\E_\beta^+$ be an aperiodic sequence. Then by (\ref{eq:24}) we have 
\begin{equation}\label{eq:34}
(t_i)\prec \si^n((t_i))\prec \al(\beta)\quad\textrm{for all }n\ge 1.
\end{equation}
Fix $m\ge 1$. By taking  $n=m,2m,\ldots$ in  \eqref{eq:34} it follows that
\begin{align*}
(t_1\ldots t_m)^\f&=t_1\ldots t_m(t_1\ldots t_m)^\f\\
&\lle t_1\ldots t_m t_{m+1}\ldots t_{2m}(t_1\ldots t_m)^\f\\
&\lle t_1\ldots t_{2m} t_{2m+1}\ldots t_{3m} (t_1\ldots t_m)^\f \lle \cdots \lle (t_i).
\end{align*}
Since $(t_i)$ is not periodic, we conclude that $(t_1\ldots t_m)^\f\prec (t_i)$. 

\vskip .2cm
For the second statement, \eqref{eq:34} and the first part of the proposition give that 
\begin{align*}
\si^n((t_1\ldots t_m)^\f)&=t_{n+1}\ldots t_m (t_1\ldots t_m)^\f\prec t_{n+1}\ldots t_m t_{m+1}t_{m+2}\ldots \;\prec \al(\beta)
\end{align*}
for each $0\le n<m$, hence $(t_1\ldots t_m)^\f\in\Sigma_\beta$.
\end{proof}

\begin{lemma}\label{lem:38}
Let $(t_i)\in\E_\beta^+$ be an  aperiodic sequence. Then there exist infinitely many $m\in\N$ such that $ t_1\ldots t_m$ is a Lyndon word.
\end{lemma}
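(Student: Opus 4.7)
The plan is to argue by contradiction. First I would invoke Lemma~\ref{lem:37}: the aperiodicity of $(t_i)$ yields the strict inequality $(t_i)\prec \sigma^n((t_i))$ for every $n\ge 1$. For each $n\ge 1$ let $N(n)$ denote the smallest index at which $(t_i)$ and $\sigma^n((t_i))$ disagree; the strictness forces $t_{N(n)}=0$ and $t_{\Phi(n)}=1$, where $\Phi(n):=n+N(n)$. Combining this strict inequality with Lemma~\ref{lem:34}{\em (i)}, the comparison $t_{i+1}\ldots t_m \succcurlyeq t_1\ldots t_{m-i}$ always holds, with equality precisely when $t_1\ldots t_{m-i}$ is a proper border of $t_1\ldots t_m$. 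Hence $t_1\ldots t_m$ is Lyndon if and only if it has no proper nontrivial border, equivalently $\Phi(p)\le m$ for every $1\le p<m$.

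Set $g(m):=\max_{1\le p<m}\Phi(p)-m$, so that $t_1\ldots t_m$ is Lyndon exactly when $g(m)\le 0$. Assume for contradiction that only finitely many $m$ give a Lyndon prefix, so $g(m)\ge 1$ for every $m$ beyond some threshold $M$. A direct unrolling yields the recurrence
\[
g(m+1)=\max\bigl\{g(m)-1,\,N(m)-1\bigr\}.
\]
The base case $g(m)=1$ is already impossible: it gives $\Phi(p_0)=m+1$ for some $p_0<m$, hence $t_{m+1}=t_{\Phi(p_0)}=1$; while maintaining $g(m+1)\ge 1$ requires $N(m)\ge 2$, i.e.\ $t_{m+1}=t_1=0$, a contradiction.

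Next I would iterate at the levels $k\ge 2$. If $g(m)=k$, the identity $\Phi(p_0)=m+k$ forces $t_{m+k}=1$, while sustaining $g(m+1)\ge k$ forces $N(m)\ge k+1$, equivalently $t_j=t_{m+j}$ for $j=1,\ldots,k$; in particular $t_k=t_{m+k}=1$ and $t_{m+1}=t_1=0$. Cross-checking these forced coordinates at several consecutive $m$'s and at successive levels $k$, the hypothesis $g(m)\ge 1$ for all large $m$ ultimately forces the tail of $(t_i)$ to become identically $1$. But from $(t_i)\in \mathcal{E}_\beta^+$ one has $\sigma^n((t_i))\prec \alpha(\beta)\preccurlyeq 1^\infty$ for every $n\ge 0$, so $(t_i)$ must contain infinitely many zeros -- the desired contradiction.

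The main obstacle is the inductive bookkeeping at the higher levels $k$: one has to merge the forced identities $t_{\Phi(p)}=1$ coming from various different $p$'s with the constraints $N(m)\ge k+1$ produced by the recurrence, and verify that no $(t_i)\in\mathcal{E}_\beta^+$ can sustain all of them simultaneously without a $1^\infty$ tail. The base case $k=1$ captures the mechanism cleanly, and the higher-level iteration follows the same two-step scheme of ``a forced $1$ from $\Phi$'' colliding with ``a forced $0$ from $N$''.
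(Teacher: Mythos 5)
Your combinatorial setup is correct and appealing: under the strict inequalities $(t_i)\prec\sigma^n((t_i))$ ($n\ge 1$), the prefix $t_1\ldots t_m$ is Lyndon precisely when it has no nontrivial border, i.e.\ when $\Phi(p)\le m$ for all $1\le p<m$; the recurrence $g(m+1)=\max\{g(m)-1,\,N(m)-1\}$ is right; and the base case $g(m)=1$ does produce a clean contradiction (using $t_1=0$). The problem is that everything after the base case --- which is where the whole difficulty of the lemma lives --- is not a proof but a hope, and the plan you sketch for it does not work as stated. First, the hypothesis of the contradiction argument is only $g(m)\ge 1$ for all large $m$; if $g(m)=k\ge 2$ then $g(m+1)\ge g(m)-1\ge 1$ holds automatically, so nothing forces $N(m)\ge k+1$. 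Your phrase ``sustaining $g(m+1)\ge k$'' silently upgrades the hypothesis from $g\ge 1$ to $g\ge k$; that upgrade is only legitimate at positions where $g$ attains $\liminf_m g(m)$, and even there the forced identities you extract ($t_{k}=1$, $t_{m+1}=t_1=0$, plus border relations) do not visibly collide. Second, and more seriously, the contradiction you are steering toward --- ``the tail of $(t_i)$ becomes identically $1$'' --- is incompatible with the constraints your own argument generates: whenever $N(m)\ge 2$ you conclude $t_{m+1}=t_1=0$, and this happens at infinitely many $m$, so no amount of bookkeeping along these lines can ever produce an all-ones tail. The induction you defer is therefore not routine bookkeeping; as designed, it cannot close.

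For comparison, the paper avoids the contradiction scheme entirely and constructs the Lyndon prefixes directly: for a large cutoff $M$ it takes the \emph{smallest} $m\le M$ with $t_{m+1}\ldots t_M\preccurlyeq t_1\ldots t_{M-m}$ and shows minimality makes $t_1\ldots t_m$ Lyndon; then, crucially, it invokes Lemma~\ref{lem:37} in the form $(t_1\ldots t_m)^\infty\prec(t_i)$ to choose the next cutoff so large that the next such $m$ is strictly larger. That use of Lemma~\ref{lem:37} --- comparing $(t_i)$ against the periodization of an already-found Lyndon prefix --- is exactly the ingredient your argument never touches, and it is what rules out the scenario (long prefixes all bordered in a way consistent with $g\ge 2$ forever) that your induction would have to exclude. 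I would either adopt that construction or, if you want to keep the border-function language, prove directly that ``every prefix of length $\ge M$ has a nontrivial border'' forces $(t_i)$ to be periodic, which is where Lemma~\ref{lem:37} (not an all-ones tail) supplies the contradiction.
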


\begin{proof}
Let $(t_i)\in\E_\beta^+$ be a non-periodic sequence. Then
\begin{equation}
\label{eq:35}
(t_i) \prec \si^n((t_i)) \quad\textrm{for any}\quad n\ge 1.
\end{equation}
We construct a strictly increasing sequence of integers $(m_i)$ such that $t_1\ldots t_{m_i}$ is a Lyndon word for all $i\ge 1$.

\vskip .2cm
{\bf Step I}. Fix a large integer $M_1$. If $t_1\ldots t_{M_1}$ is a Lyndon word, then we are done by setting $m_1=M_1$. Otherwise, by Lemma \ref{lem:34}  (i) there exists a smallest integer $1 \le m_1\le M_1-1$ such that
\begin{equation}
\label{eq:36}
t_{m_1+1}\ldots t_{M_1}\lle t_1\ldots t_{M_1-m_1}.
\end{equation}
We claim that $t_1\ldots t_{m_1}$ is a Lyndon word.

{ If $m_1=1$ then it is clear that $t_1$ is a Lyndon word. So we assume $m_1>1$.}   Then from \eqref{eq:35}  it follows that
\begin{equation}
\label{eq:37}
t_{i+1}\ldots t_{m_1}\lge t_1\ldots t_{m_1-i}\quad\textrm{for all }0<i<m_1.
\end{equation}
By Lemma \ref{lem:34} (i) it suffices to prove that strict inequalities hold in \eqref{eq:37} for all $0<i<m_1$. Suppose on the contrary that
$t_{k+1}\ldots t_{m_1}=t_1\ldots t_{m_1-k}$ for some $0<k<m_1$. Then by \eqref{eq:36} and \eqref{eq:35} it follows that
\begin{align*}
t_{k+1}\ldots t_{M_1}&=t_{k+1}\ldots t_{m_1} t_{m_1+1}\ldots t_{M_1}\\
&\lle t_1\ldots t_{m_1-k}t_1\ldots t_{M_1-m_1}\lle t_1\ldots t_{M_1-k},
\end{align*}
contradicting the assumption that $m_1$ is the smallest integer satisfying \eqref{eq:36}. Therefore, $t_1\ldots t_{m_1}$ is a Lyndon word.

\vskip .2cm
{\bf Step II.}  Suppose for $j\ge 2$ we have found integers $M_1<M_2<\cdots<M_{j-1}$ and $m_1<m_2<\cdots <m_{j-1}$, such that $m_i\le M_i$ and $t_1\ldots t_{m_i}$ is a Lyndon word for each $1 \le i < j$. By Lemma~\ref{lem:37} we have $(t_i)\succ (t_1\ldots t_{m_{j-1}})^\f$. This implies that there exists a large integer $M_j>M_{j-1}$ such that
\begin{equation}
\label{eq:38}
t_1\ldots t_{M_j}0^\f\succ (t_1\ldots t_{m_{j-1}})^\f.
\end{equation}
If $t_1\ldots t_{M_j}$ is a Lyndon word, then we are done by setting $m_j=M_j$. Otherwise, by Lemma \ref{lem:34} (i) let $1\le m_j\le M_j-1$ be the smallest integer for which
\begin{equation}
\label{eq:39}
t_{m_j+1}\ldots t_{M_j}\lle t_1\ldots t_{M_j-m_j}.
\end{equation}
By the same argument as in Step I we have that $t_1\ldots t_{m_j}$ is a Lyndon word. So, it suffices to prove that $m_{j}>m_{j-1}$.

\begin{itemize}
\item If $m_{j}<m_{j-1}$, then by \eqref{eq:39} and using $M_j>M_{j-1}$ it follows that $t_{m_j+1}\ldots t_{M_{j-1}}\lle t_1\ldots t_{M_{j-1}-m_j}$, leading to a contradiction with the minimality of $m_{j-1}$.

\item If $m_{j}=m_{j-1}$, then by \eqref{eq:38} and \eqref{eq:39} we have
\begin{equation}
\label{eq:310}
(t_1\ldots t_{m_j})^\f\prec t_1\ldots t_{M_j}0^\f\lle t_1\ldots t_{m_j}t_1\ldots t_{M_j-m_j}0^\f.
\end{equation}
Write $M_j=s_{j} m_j+r_j$ with $s_j\ge 1$ and $1 \le r_j\le m_j$. Then by \eqref{eq:310} we conclude that
\[
t_1\ldots t_{M_j}=(t_1\ldots t_{m_j})^{s_j}t_1\ldots t_{r_j},
\]
leading to a contradiction with \eqref{eq:38}.
\end{itemize}

Hence, we have found a strictly increasing sequence $(m_j)$ such that $t_1\ldots t_{m_j}$ is a Lyndon word for each $j\ge 1$.
\end{proof}

Note that both previous lemmas do not hold for $\mathcal E_\beta$. Let $(t_i) \in \mathcal E_\beta^0$ be such that $\sigma^n ((t_i))=0^\infty$. Then for any $m>n$ we have $(t_1 \ldots t_m)^\infty \succ (t_i)$, contradicting the statement of  Lemma~\ref{lem:37}. As for the statement of Lemma~\ref{lem:38}, for all $m \ge 2n$ we have that $t_1 \ldots t_m$ is not Lyndon.

{\begin{proof}[Proof of Proposition \ref{prop:36}]
Let $t\in E_\beta^+$ be a point with aperiodic greedy $\beta$-expansion $b(t,\beta)=(t_i)$. Since $(t_i)\in\E_\beta^+$, by Lemma \ref{lem:38} there exists a sequence  $(m_j)$ such that $t_1\ldots t_{m_j}$ is Lyndon for all $j\ge 1$. Furthermore, by Lemma \ref{lem:37} we have $(t_1\ldots t_{m_j})^\f\in\Sigma_\beta$ for each $j\ge 1$. Hence, for all  $j\ge 1$ we have $(t_1\ldots t_{m_j})^\f\in\E_\beta^+$ and thus $\pi_\beta\left((t_1\ldots t_{m_j})^\infty\right)\in E_\beta^+$. Letting $j\ra\f$ we conclude that $\pi_\beta((t_1\ldots t_{m_j})^\f)\ra \pi_\beta((t_i))=t$ which implies that $t$ is not isolated in $E_\beta^+$.

\vskip .2cm
Now assume  that $t\in E_\beta^+$ has a periodic greedy $\beta$-expansion $b(t, \beta)=(t_1\ldots t_m)^\f$, where $m$ is chosen minimal. We will show that $t$ is not isolated in $E_\beta$. If $m=1$, then we have $b(t, \beta)=0^\f$, i.e., $t=0$. In this case the result trivially follows from Proposition~\ref{prop:24}. Now assume $m\ge 2$. Let $a_1\ldots a_m$ be the maximal cyclic permutation of $t_1\ldots t_m$. Then there exists a $j\in\set{0,1,\ldots, m-1}$, such that $a_1\ldots a_m=t_{j+1}\ldots t_mt_1\ldots t_j$. Note that $\si^n((t_1\ldots t_m)^\f)\prec \al(\beta)$ for all $n\ge 0$. Then 
\begin{equation}\label{eq:1}
(a_1\ldots a_m)^\f\prec \al(\beta),
\end{equation}
which implies $a_1\ldots a_m\lle \al_1(\beta)\ldots\al_m(\beta)$. We claim that 
$
a_1\ldots a_m\prec \al_1(\beta)\ldots \al_m(\beta).
$

\vskip .2cm
If $a_1\ldots a_m=\al_1(\beta)\ldots \al_m(\beta)$, then (\ref{eq:1}) together with Lemma~\ref{lem:21} gives
\[ a_1\ldots a_m\lle \al_{m+1}(\beta)\ldots \al_{2m}(\beta) \lle \al_1(\beta)\ldots \al_m(\beta)=a_1\ldots a_m.\]
So, $a_1\ldots a_{2m}=(a_1\ldots a_m)^2$. Iterating this argument with Lemma~\ref{lem:21} and (\ref{eq:1}) gives that $\al(\beta)=(a_1\ldots a_m)^\f$, leading to a contradiction with (\ref{eq:1}). This proves the claim.

\vskip .2cm
For $N\in\N$, define the sequence $\mathbf t_N:=(t_1\ldots t_m)^N t_1\ldots t_j^+0^\f$. Since  $t_j=0$, the sequence $\mathbf t_N$ is well-defined. By Lemma~\ref{lem:37} one can verify that $\si^n(\mathbf t_N)\succ \mathbf t_N$ for all $0\le n<mN+j$. Moreover, by the claim it follows that $\si^n(\mathbf t_N)\prec \al(\beta)$ for all $n\ge 0$. So, $\mathbf t_N\in\mathcal E_\beta^0$ for all $N\in\N$. Since $\pi_\beta(\mathbf t_N)\searrow t$ as $N\ra\f$, the point $t\in E_\beta^+$ is not isolated in $E_\beta$. 
\end{proof}
}

The next proposition says that no point from $E_\beta^0$ can be approximated from above by elements from $E_\beta$ and that a point $t \in E_\beta^0$ is isolated in $E_\beta$ if the orbit of 1 enters $(0,t)$.
\begin{proposition} \label{p:e0right}
Let $t \in E_\beta^0$. Then there is a $\delta >0$, such that $E_\beta \cap [t, t+\delta] = \{t\}$. Moreover, if $\beta -1 \not \in K_\beta(t)$, then $t$ is isolated in $E_\beta$.
\end{proposition}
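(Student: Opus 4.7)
Assume $t>0$; the case $t=0\in E_\beta^0$ is incompatible with the statement by Proposition~\ref{prop:24}, so we read $t\in E_\beta^0\setminus\{0\}$. Write the greedy expansion of $t$ as $b(t,\beta)=t_1\ldots t_n 0^\f$ with $n\ge 1$ minimal; then $t_n=1$, $T_\beta^{n-1}(t)=\frac1\beta$ and $T_\beta^n(t)=0$. The proof splits naturally into a right-sided analysis (the first claim) and a more delicate left-sided analysis (the second claim).

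For the first claim I would exploit the right-continuity of $T_\beta$ on $[0,1)$, which holds even at $\frac1\beta$ since $T_\beta(\frac1\beta)=0=\lim_{x\to(1/\beta)^+}T_\beta(x)$. Combined with Lemma~\ref{lem:22}, this ensures that for $s\in(t,t+\de_0)$ with $\de_0>0$ small, the first $n$ greedy digits of $s$ agree with $t_1\ldots t_n$, giving the linear relation $T_\beta^n(s)=\beta^n(s-t)$. A short arithmetic check shows $T_\beta^n(s)\in(0,s)$ whenever $0<s-t<\frac{t}{\beta^n-1}$, so $s\notin K_\beta(s)$ and hence $s\notin E_\beta$. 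Combined with $t\in E_\beta$, this yields $E_\beta\cap[t,t+\de]=\{t\}$ for such $\de$.

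For the second claim, suppose $\beta-1\notin K_\beta(t)$ and let $N\ge 0$ be minimal with $T_\beta^N(\beta-1)\in(0,t)$. The minimality forces $T_\beta^k(\beta-1)\ge t$ and $T_\beta^k(\beta-1)\ne\frac1\beta$ for all $0\le k<N$: otherwise $T_\beta^{k+1}(\beta-1)=0$ and the orbit stabilises at $0$, contradicting $T_\beta^N(\beta-1)\in(0,t)$. For $s<t$ close to $t$, a direct computation yields $T_\beta^{n-1}(s)=\frac1\beta-\beta^{n-1}(t-s)$, then $T_\beta^n(s)=1-\beta^n(t-s)$ (the orbit detours near $1$ rather than landing at $0$), then $T_\beta^{n+1}(s)=\beta-1-\beta^{n+1}(t-s)$. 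Using that no $T_\beta^k(\beta-1)$ with $k<N$ equals $\frac1\beta$, a finite induction gives
\[ T_\beta^{n+1+j}(s)=T_\beta^j(\beta-1)-\beta^{n+1+j}(t-s),\qquad 0\le j\le N. \]
Choosing $\de'>0$ so that $t-\de'>T_\beta^N(\beta-1)$ and $\beta^{n+1+N}\de'$ is negligible compared with the distance from $T_\beta^N(\beta-1)$ to both $0$ and $t$, we obtain $T_\beta^{n+1+N}(s)\in(0,s)$ for every $s\in(t-\de',t)$, so $s\notin E_\beta$. Together with the first claim, $t$ is isolated in $E_\beta$.

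The main obstacle is the failure of left-continuity of $T_\beta$ at $\frac1\beta$: approaching $t$ from the left, the orbit of $s$ does not shadow the orbit of $t$ but instead detours near $1$ and then near $\beta-1$. The hypothesis $\beta-1\notin K_\beta(t)$ is precisely what permits the inductive linear formula to be pushed far enough to produce an iterate of $s$ inside $(0,s)$, and the minimality of $N$ guarantees that the formula does not break along the way (no intermediate $T_\beta^k(\beta-1)$ coincides with the discontinuity $\frac1\beta$).
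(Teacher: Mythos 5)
Your proof is correct and follows essentially the same route as the paper: right-continuity of $T_\beta$ rules out approximation from above, and for the left side the orbit of $s<t$ detours past $1$ and then shadows the orbit of $\beta-1$ until it falls into $(0,s)$. Your explicit linear shadowing formulas, the observation that no intermediate $T_\beta^k(\beta-1)$ can equal $\frac1\beta$, and the exclusion of the degenerate case $t=0$ (for which the first claim would fail by Proposition~\ref{prop:24}) are welcome refinements of details the paper's proof leaves implicit.
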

 
\begin{proof}
If $t \in E_\beta$, then there is a smallest $n \ge 0$, such that $T^n_\beta(t) =\frac1\beta$. By the right continuity of $T_\beta$, there is a $\delta>0$, such that all $\epsilon \in (t, t+\delta]$ satisfy $T_\beta^{n+1}(\epsilon) \in (0,t) \subseteq (0,\epsilon)$. Hence, $\epsilon \not \in K_\beta(\epsilon)$ and thus, $\epsilon \not \in E_\beta$.

\vskip .2cm
The first statement implies that to prove an element from $E_\beta^0$ is isolated, it is enough to prove that it cannot be approximated from below. If again $n$ is such that $T^n_\beta(t) = \frac1\beta$, then for a small enough $\delta$, we know that for any point $\epsilon \in [t-\delta, t)$ the point $T^{n+1}_\beta(\epsilon)$ is close to 1. Let $m$ be the smallest integer such that $T^m_\beta(\beta-1) \in (0,t)$. Then there is a $0 < \delta < t- T^m_\beta(\beta-1)$, such that any $\epsilon \in [t-\delta,t)$ satisfies
\[ T_\beta^{n+1+m+1}(\epsilon) \in (0, T_\beta^m(\beta-1)) \subseteq (0, \epsilon).\]
Hence, $\epsilon \not \in E_\beta$ and $E_\beta \cap [t-\delta,t]=\{t\}$.
\end{proof}

\subsection{The construction of basic intervals}
from now on we focus on the set $E_\beta^+$. We first construct subintervals of $(1,2)$ such that $E_\beta^+$ contains isolated points whenever $\beta$ is in one of these intervals. We start with a couple of lemmas.
 
\begin{lemma}\label{lem:39}
Let $(t_i), (\alpha_i) \in \{0,1\}^{\mathbb N}$ be given. Suppose there is an $m \ge 1$, such that $\alpha_m=1$ and $\sigma^m((\alpha_i)) \lle (t_i)$. Define the sets 
\begin{eqnarray*}
\mathcal K&:=& \{ (x_i) \in \{0,1\}^{\mathbb N} \, : \,  (t_i) \lle \sigma^n((x_i)) \prec (\alpha_i) \text{ for all } n \ge 0 \},\\
\mathcal X_m&:=& \{ (x_i) \in \{0,1\}^{\mathbb N} \, : \,  (t_i) \lle \sigma^n((x_i)) \lle (\alpha_1 \ldots \alpha_m^-)^\infty \text{ for all } n \ge 0 \}.
\end{eqnarray*}
Then $\mathcal K = \mathcal X_m$.
\end{lemma}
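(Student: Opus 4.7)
The plan is to prove the two inclusions $\mathcal X_m\subseteq\mathcal K$ and $\mathcal K\subseteq\mathcal X_m$ separately. The easy inclusion rests on a single observation: since $\alpha_m=1$, the sequences $(\alpha_1\ldots\alpha_m^-)^\infty$ and $(\alpha_i)$ agree on positions $1,\ldots,m-1$ and differ at position $m$ (where the former is $0$ and the latter is $1$), so $(\alpha_1\ldots\alpha_m^-)^\infty\prec(\alpha_i)$. Any $(x_i)\in\mathcal X_m$ then automatically satisfies $\sigma^n((x_i))\lle(\alpha_1\ldots\alpha_m^-)^\infty\prec(\alpha_i)$ for every $n$, and so lies in $\mathcal K$.

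For the non-trivial inclusion $\mathcal K\subseteq\mathcal X_m$ I would fix $(x_i)\in\mathcal K$, pick an arbitrary shift $y:=\sigma^n((x_i))$, and argue by contradiction: assume $y\succ(\alpha_1\ldots\alpha_m^-)^\infty$. Let $k$ be the smallest index at which $y$ exceeds the sequence $(\alpha_1\ldots\alpha_m^-)^\infty$, and decompose $k=qm+r$ with $q\ge 0$ and $1\le r\le m$. Minimality of $k$ forces $y_1\ldots y_{k-1}$ to equal the corresponding prefix $(\alpha_1\ldots\alpha_m^-)^q\alpha_1\ldots\alpha_{r-1}$, so the shifted sequence $\sigma^{qm}(y)$ begins with $\alpha_1\ldots\alpha_{r-1}y_k$.

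The argument then splits into two cases according to $r$. If $r<m$, the $r$-th letter of $\alpha_1\ldots\alpha_m^-$ equals $\alpha_r$, so from $y_k>\alpha_r$ we must have $y_k=1$ and $\alpha_r=0$; then $\sigma^{qm}(y)$ starts with $\alpha_1\ldots\alpha_{r-1}1$, while $(\alpha_i)$ starts with $\alpha_1\ldots\alpha_{r-1}0$, giving $\sigma^{qm}(y)\succ(\alpha_i)$ and contradicting $(x_i)\in\mathcal K$. If $r=m$, then $y_k=1=\alpha_m$ and $\sigma^{qm}(y)$ matches $(\alpha_i)$ on the first $m$ symbols, so the comparison is pushed to the tails $\sigma^{(q+1)m}(y)$ versus $\sigma^m((\alpha_i))$. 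Here the hypothesis $\sigma^m((\alpha_i))\lle(t_i)$ combines with the $\mathcal K$-condition $\sigma^{(q+1)m}(y)\lge(t_i)$ to give $\sigma^{(q+1)m}(y)\lge\sigma^m((\alpha_i))$, so either $\sigma^{qm}(y)\succ(\alpha_i)$ or $\sigma^{qm}(y)=(\alpha_i)$ — both ruled out by $(x_i)\in\mathcal K$.

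The main (and essentially only) obstacle is the boundary case $r=m$, where the naive letter-by-letter comparison at position $k$ is inconclusive; this is precisely where the assumption $\sigma^m((\alpha_i))\lle(t_i)$ is used, and I would highlight that it is exactly the right hypothesis to transport the inequality one period further along the sequence and close the contradiction.
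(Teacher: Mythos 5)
Your proof is correct and follows essentially the same route as the paper: the inclusion $\mathcal X_m\subseteq\mathcal K$ is immediate from $(\alpha_1\ldots\alpha_m^-)^\infty\prec(\alpha_i)$, and for the reverse inclusion both arguments locate an occurrence of $\alpha_1\ldots\alpha_m$ in a shift of $(x_i)$ and then use $\sigma^m((\alpha_i))\lle(t_i)$ to push the comparison into the tail and contradict the lower bound in $\mathcal K$. Your explicit case split on the residue $r$ merely fills in the step that the paper states without proof, namely that a violation of the $\mathcal X_m$ bound by an element of $\mathcal K$ forces the full word $\alpha_1\ldots\alpha_m$ to appear.
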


\begin{proof}
Obviously, $\mathcal X_m \subseteq \mathcal K$. We show that $\mathcal K \setminus \mathcal X_m = \emptyset$. Suppose that this is not the case and let $(x_i) \in \mathcal K \setminus \mathcal X_m$. Then there is a $j \ge 1$, such that $x_{j+1}\ldots x_{j+m}=\al_1 \ldots \al_m$. Since $(x_i) \in \mathcal K$, the assumption that $\sigma^m((\alpha_i)) \lle (t_i)$ implies that
\[ x_{j+m+1} x_{j+m+2} \ldots \prec \al_{m+1} \al_{m+2} \ldots \lle (t_i),\]
which contradicts $(x_i) \in \mathcal K$. Hence $\mathcal K \setminus \mathcal X_m = \emptyset$.
\end{proof}

Let $\beta\in(1,2)$ and $t\in [0, 1)$. The previous lemma has the following consequence for $\mathcal K_\beta^+(t)$. If there is a smallest $m\ge 1$ such that
\[    \al_{m+1}(\beta)\al_{m+2}(\beta)\ldots\lle b(t, \beta),\]
or equivalently, $\tilde T^m_\beta (1) \le t$, then we can rewrite $\mathcal K_\beta^+(t)$ as
  \[  \mathcal K_\beta^+ (t)=\set{(x_i): b(t, \beta)\lle \si^n((x_i))\lle (\al_1(\beta)\ldots \al_{m}(\beta)^-)^\f\textrm{  for any }n\ge 0}.
  \]
 Hence, any point in the survivor set $K_\beta^+(t)$ then has the property that its entire orbit lies between $t$ and the point $\pi_\beta((\al_1(\beta)\ldots \al_{m}(\beta)^-)^\infty)$. We need two more lemmas. Recall the definition of the set $\mathcal Q$ from Lemma~\ref{lem:21} as the set of sequences that occur as $\alpha(\beta)$ for some $\beta \in (1,2]$.

\begin{lemma}\label{lem:310}
Let $(a_1\ldots a_m)^\f \in \mathcal Q$ with $m$ minimal. Then 
\[
 a_{i+1}\ldots a_{m}^+\lle a_1\ldots a_{m-i}\quad\textrm{for all }0<i<m.
\]
\end{lemma}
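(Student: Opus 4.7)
The plan is a case analysis on the lex comparison of the two equal-length words $a_{i+1}\ldots a_m$ and $a_1\ldots a_{m-i}$, which by the $\mathcal Q$-condition $\si^i((a_j)^\f)\lle (a_j)^\f$ already satisfy $a_{i+1}\ldots a_m \lle a_1\ldots a_{m-i}$. For $m=1$ the statement is vacuous, so I assume $m\ge 2$, and I first verify that $a_m=0$ (so that $a_m^+$ is well-defined): otherwise $\si^{m-1}((a_j)^\f)\lle (a_j)^\f$ together with $a_m=1$ forces $a_2=\cdots=a_m=1$ by a position-by-position induction, giving $(a_j)^\f=1^\f$ of period $1$ and contradicting the minimality of $m$.

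For the strict case $a_{i+1}\ldots a_m \prec a_1\ldots a_{m-i}$, I would locate the first position of difference $k^*\in\{1,\ldots,m-i\}$, so $a_{i+k^*}=0$ and $a_{k^*}=1$. When $k^*<m-i$ the difference is interior and replacing $a_m=0$ by $1$ preserves it, yielding $a_{i+1}\ldots a_m^+ \prec a_1\ldots a_{m-i}$. When $k^*=m-i$ one has $a_{i+1}\ldots a_{m-1}=a_1\ldots a_{m-i-1}$, $a_m=0$, and $a_{m-i}=1$, so $a_{i+1}\ldots a_m^+$ equals $a_1\ldots a_{m-i}$ on the nose.

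The remaining case $a_{i+1}\ldots a_m = a_1\ldots a_{m-i}$ I must show does not occur. Writing $(b_j):=(a_1\ldots a_m)^\f$, this equality reads $b_{k+i}=b_k$ for $k=1,\ldots,m-i$. The minimality of $m$ rules out $\si^i((b_j))=(b_j)$, so the $\mathcal Q$-condition forces a smallest $k^*$ with $b_{k^*+i}<b_{k^*}$, and by the Case~2 assumption $k^*\in\{m-i+1,\ldots,m\}$. Setting $\ell^*:=k^*-(m-i)\in\{1,\ldots,i\}$ and using the period $m$ of $(b_j)$ (note $k^*+i>m$), this strict inequality reads $b_{\ell^*}<b_{m-i+\ell^*}$, while the equalities before $k^*$ translate to $b_\ell=b_{m-i+\ell}$ for $\ell<\ell^*$. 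Now a \emph{second} $\mathcal Q$-condition, $\si^{m-i}((b_j))\lle (b_j)$, gives the contradiction: its first $i$ entries are $b_{m-i+1},\ldots,b_m$, and by the relations just derived they agree with $b_1,\ldots,b_{\ell^*-1}$ in positions $1,\ldots,\ell^*-1$ but strictly exceed $b_{\ell^*}$ at position $\ell^*$, so $\si^{m-i}((b_j))\succ (b_j)$.

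The main obstacle is Case~2: the delicate piece is combining the two $\mathcal Q$-conditions for shifts $i$ and $m-i$ and carefully translating the strict inequality $b_{k^*+i}<b_{k^*}$ back to the first $i$ positions via the period $m$ of $(b_j)$. Once this bookkeeping is set up, the contradiction falls out, and then the strict case together with the preliminary $a_m=0$ are routine.
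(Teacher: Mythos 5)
Your proof is correct, but it takes a genuinely different route from the paper's. The paper disposes of the lemma in three lines by passing to the dynamical picture: it takes the $\beta$ with $\al(\beta)=(a_1\ldots a_m)^\f$, notes that $b(1,\beta)=a_1\ldots a_m^+0^\f$ and $b(T_\beta^i(1),\beta)=a_{i+1}\ldots a_m^+0^\f$ with $T_\beta^i(1)<1$, and reads the inequality off from the strict monotonicity of $x\mapsto b(x,\beta)$ (Lemma~\ref{lem:22}). You instead argue purely combinatorially on the word: the $\mathcal Q$-condition for the shift $i$ gives $a_{i+1}\ldots a_m\lle a_1\ldots a_{m-i}$; the strict case survives incrementing the last digit (with exact equality precisely when the first discrepancy sits at position $m-i$); and the equality case is excluded by playing the shift-$i$ and shift-$(m-i)$ conditions against each other, which is exactly the observation that a border of length $m-i$ would force $\si^{m-i}$ to fix the sequence and contradict the minimality of $m$. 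Your preliminary check that $a_m=0$ (so that $a_m^+$ is even defined) is a point the paper leaves implicit. The trade-off: the paper's argument is shorter but leans on the machinery of Lemmas~\ref{lem:21} and~\ref{lem:22}, while yours is self-contained at the level of words and makes visible where minimality of the period is actually used. The only spots where you are slightly terse --- that the first index $k^*$ with $b_{k^*+i}<b_{k^*}$ is at most $m$ (else periodicity would make $\si^i$ fix the sequence), and that $a_1=1$ seeds the induction showing $a_m=1$ forces $1^\f$ --- are both immediate, so the proof stands.
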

\begin{proof}
Let $\beta\in(1,2)$ be such that $\al(\beta)=(a_1\ldots a_m)^\f$. Then $b(1, \beta)=a_1\ldots a_m^+0^\infty$. Hence, for each $0<i<m$ we have $b(T^i_\beta(1), \beta) = a_{i+1}\ldots a_m^+0^\infty$ and $T_\beta^i(1) <1$. The result then follows from Lemma~\ref{lem:22}.
\end{proof}

Note that for any word $a_1\ldots a_m$ there is a $0\leq j <m-1$ such that $a_{j+1}\ldots a_m a_1\ldots a_j$ is the smallest among its permutations and therefore Lyndon. We call this word the \emph{Lyndon word for} $a_1\ldots a_m$.

\begin{lemma}\label{lem:311}
Let $a_1 \ldots a_m$ be a non periodic word that is the largest among its permutations and let $s_1 \ldots s_m = a_{j+1}\ldots a_ma_1 \ldots a_j$ be the Lyndon word for it. Set
\[  \mathcal Z_m:= \{  (x_i) \in\set{0, 1}^\N\, :  \, s_1\ldots s_m 0^\f \lle \sigma^n ( (x_i)) \lle (a_1 \ldots a_m)^\f \ \forall n \ge 0 \}.\]
\begin{itemize}
\item[{\rm{(i)}}] If $(x_i)\in \mathcal Z_m$ has prefix $a_{j+1}\ldots a_m$, then $(x_i)=(s_1\ldots s_m)^\f$;
\item [{\rm(ii)}] If $(x_i)\in\mathcal Z_m$ has prefix $a_1\ldots a_j$, then $(x_i)=(a_1\ldots a_m)^\f$. 
\end{itemize}
\end{lemma}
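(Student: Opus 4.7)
The plan is to prove (i) and (ii) in parallel, exploiting the fact that $(a_1\ldots a_m)^\f = \sigma^{m-j}((s_1\ldots s_m)^\f)$, so that the two target sequences lie on a single shift orbit. I will first establish two independent ``one-block'' claims that pin down $x_1\ldots x_m$, and then iterate by alternating the two on shifted sequences.

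For the one-block claims, in case (i) assume $x_1\ldots x_{m-j} = a_{j+1}\ldots a_m = s_1\ldots s_{m-j}$. The lower bound $(x_i)\lge s_1\ldots s_m 0^\f$, after cancelling the common prefix of length $m-j$, yields $x_{m-j+1}\ldots x_m \lge s_{m-j+1}\ldots s_m = a_1\ldots a_j$. The upper bound $\sigma^{m-j}((x_i))\lle (a_1\ldots a_m)^\f$, restricted to its first $j$ symbols, gives $x_{m-j+1}\ldots x_m \lle a_1\ldots a_j$. The squeeze forces $x_1\ldots x_m = s_1\ldots s_m$. Symmetrically for (ii): if $x_1\ldots x_j = a_1\ldots a_j$, the upper bound on $(x_i)$ restricted to positions $j+1,\ldots,m$ and the lower bound on $\sigma^j((x_i))$ restricted to the same range squeeze $x_{j+1}\ldots x_m$ to $a_{j+1}\ldots a_m$, giving $x_1\ldots x_m = a_1\ldots a_m$. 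Crucially, each one-block claim uses only the defining inequalities of $\mathcal Z_m$ together with its own prefix hypothesis, so neither invokes the other.

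I then induct on $n\ge 1$, proving simultaneously that in case (i) $x_1\ldots x_{nm} = (s_1\ldots s_m)^n$, and in case (ii) $x_1\ldots x_{nm} = (a_1\ldots a_m)^n$. The base $n=1$ is the one-block claim. For the inductive step in case (i), the last $j$ symbols of $(s_1\ldots s_m)^n$ are $s_{m-j+1}\ldots s_m = a_1\ldots a_j$, so $\sigma^{nm-j}((x_i))\in\mathcal Z_m$ begins with $a_1\ldots a_j$. Applying the case (ii) one-block claim to this shift gives its first $m$ symbols as $a_1\ldots a_m$, which in particular supplies $x_{nm+1}\ldots x_{(n+1)m-j} = a_{j+1}\ldots a_m$. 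Then $\sigma^{nm}((x_i))$ begins with $a_{j+1}\ldots a_m$, and the case (i) one-block claim completes the new block: $x_{nm+1}\ldots x_{(n+1)m} = s_1\ldots s_m$. Case (ii) is handled by the mirror alternation (apply the case (i) one-block claim to $\sigma^{(n-1)m+j}((x_i))$, then the case (ii) claim to $\sigma^{nm}((x_i))$). Letting $n\to\infty$ yields $(x_i)=(s_1\ldots s_m)^\f$ or $(a_1\ldots a_m)^\f$, as required.

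The main obstacle is the apparent circularity: iterating (i) naturally seems to require (ii) on a shifted sequence and vice versa. The resolution is precisely to prove the two one-block claims directly from $\mathcal Z_m$-membership and a single prefix hypothesis, without invoking the other statement, so that the joint induction can legitimately alternate between them on successively shifted sequences. A minor technical matter is ruling out the degenerate boundary cases $j=0$ or $j=m-1$ (where the Lyndon rotation is trivial or one of the prefix hypotheses is empty), and checking that the non-periodicity of $a_1\ldots a_m$ together with its maximality among cyclic permutations rules out problematic equality configurations in the lexicographic comparisons that could otherwise prevent the squeeze from closing.
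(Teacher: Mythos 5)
Your proof is correct and follows essentially the same route as the paper: the paper likewise pins down the block following the given prefix by squeezing between the lower bound $s_1\ldots s_m0^\infty$ and the upper bound $(a_1\ldots a_m)^\infty$ at the shifts $0$, $m-j$ and $m$, and then iterates (proving (i) in one pass of length $m$ and noting (ii) is symmetric, where you split each pass into two sub-steps of lengths $j$ and $m-j$). The reorganisation into two independent one-block claims with an alternating induction is only a bookkeeping difference, not a different argument.
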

{
\begin{proof} 
Since the proofs of (i) and (ii) are similar, we only give the proof of (i). 
Let $a_{j+1}\ldots a_m x_1x_2\ldots \in\mathcal Z_m$. Then
\begin{equation}
\label{eq:315}
s_1\ldots s_m0^\f\lle \si^n(a_{j+1}\ldots a_m x_1x_2\ldots)\lle (a_1\ldots a_m)^\f\quad\textrm{for all}\quad n\ge 0.
\end{equation}
In particular,
\[
a_{j+1}\ldots a_m x_1\ldots x_j\lge s_1 \ldots s_m = a_{j+1}\ldots a_m a_1\ldots a_j,
\]
which gives
\[
x_1\ldots x_j\lge a_1\ldots a_j.
\]
On the other hand, by taking $n=m-j$ in (\ref{eq:315}) we get $x_1\ldots x_m\lle a_1\ldots a_m$. Hence
\[
x_1\ldots x_j=a_1\ldots a_j\quad\textrm{and}\quad x_{j+1}\ldots x_m\lle a_{j+1}\ldots a_m.
\]
Again, by (\ref{eq:315}) now with $n=m$, we have $x_{j+1}\ldots x_m\lge s_1\ldots s_{m-j}=a_{j+1}\ldots a_m$. Therefore,
$x_1\ldots x_m=a_1\ldots a_m$.
By iteration we conclude that
\[
a_{j+1}\ldots a_mx_1x_2\ldots =(a_{j+1}\ldots a_ma_1\ldots a_j)^\f=(s_1\ldots s_m)^\f
\]
as required. 
\end{proof}
}

We now construct   infinitely many nested intervals  $(\beta_L, \beta_R]$ such that   $E_\beta^+$ has   isolated points whenever $\beta\in(\beta_L,\beta_R]$. Figure \ref{fig:3} shows some of these intervals. We will later show that these basic intervals cover the whole interval $(1,2)$ up to a set of zero Lebesgue measure. 

\begin{figure}[h!]\centering
\includegraphics[width=\textwidth]{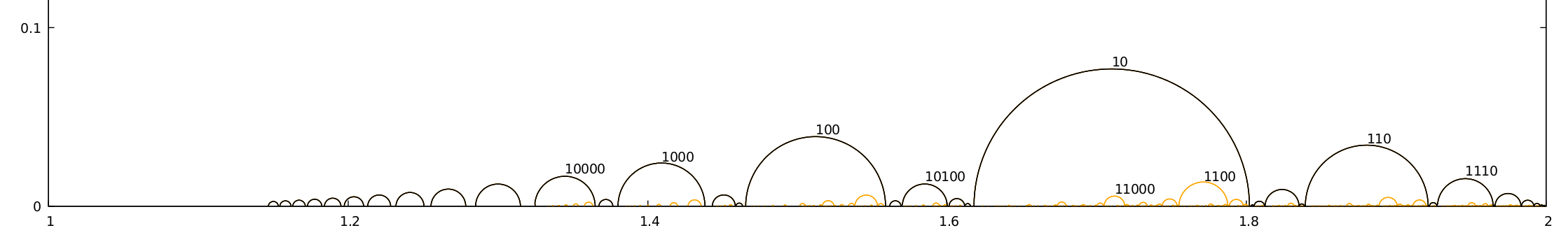}
 \caption{Some of the basic intervals  $(\beta_L, \beta_R]$. The numbers near the arches indicate the words $a_1 \ldots a_m$ such that $\alpha(\beta_L) = (a_1 \ldots a_m)^\infty$. The intervals that are not contained in any other interval are the Farey intervals. They are the ones for which $a_1\ldots a_m$ is a Farey word. The the arches corresponding to Farey intervals are shown in black, the orange arches correspond to words that are Lyndon, but not Farey.}\label{fig:3}
\end{figure}

\begin{proposition}\label{prop:312}
Let $\beta_L\in(1,2)$ be any base with periodic quasi-greedy expansion $\alpha(\beta_L)=(a_1\ldots a_m)^\infty$, where $m$ is minimal. Let $s_1\ldots s_m$ be the Lyndon word for $a_1\ldots a_m$. The point $\beta_R\in(1,2)$ with quasi-greedy expansion given by
\[
\alpha(\beta_R)=a_1\ldots a_{m}^+ (s_1\ldots s_m)^\f
\]
exists. Moreover,
\begin{itemize}
\item[{\rm(i)}] $(s_1\ldots s_m)^\infty \in \Sigma_\beta$ if and only if $\beta > \beta_L$;
\item[{\rm(ii)}] if $\beta\in  (\beta_L, \beta_R]$, then $\pi_\beta ( (s_1\ldots s_m)^\f )$ is an isolated point of $E_\beta^+$;
\item[{\rm(iii)}] if $\beta>\beta_R$, then $\pi_\beta((s_1\ldots s_m)^\f)$ is not an isolated point of  $E_\beta^+$. 
\end{itemize}
\end{proposition}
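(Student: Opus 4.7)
The plan is to verify the four claims in sequence, relying on Lemmas~\ref{lem:21}, \ref{lem:39}, \ref{lem:310}, and \ref{lem:311}.

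\emph{Existence of $\beta_R$ and item (i).} First I check that $(\alpha_i):=a_1\ldots a_m^+ (s_1\ldots s_m)^\infty$ belongs to the set $\mathcal{Q}$ of Lemma~\ref{lem:21}, so that $\beta_R$ is well defined. Non-termination in $0^\infty$ is immediate from $s_m=1$, and to verify $\sigma^n((\alpha_i))\lle(\alpha_i)$ I split into cases: for $0<n<m$, Lemma~\ref{lem:310} applied to $\alpha(\beta_L)=(a_1\ldots a_m)^\infty$ yields $a_{n+1}\ldots a_m^+\lle a_1\ldots a_{m-n}$; for $n=m$, one uses $(s_1\ldots s_m)^\infty\lle(a_1\ldots a_m)^\infty\prec(\alpha_i)$; for $n>m$, every shift of $(s_1\ldots s_m)^\infty$ is a cyclic permutation and hence $\lle(a_1\ldots a_m)^\infty\prec(\alpha_i)$. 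For item (i), (\ref{eq:22}) reduces $(s_1\ldots s_m)^\infty\in\Sigma_\beta$ to requiring that all cyclic shifts be $\prec\alpha(\beta)$; since the maximum cyclic permutation equals $(a_1\ldots a_m)^\infty=\alpha(\beta_L)$, this is equivalent to $\alpha(\beta_L)\prec\alpha(\beta)$, i.e., to $\beta>\beta_L$ by Lemma~\ref{lem:21}.

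\emph{Item (ii).} The Lyndon property of $s_1\ldots s_m$ together with item (i) gives $(s_1\ldots s_m)^\infty\in\mathcal{E}_\beta^+$, so $t\in E_\beta^+$. For isolation from above, note that $\mathcal{K}_\beta^+(t)\subseteq\mathcal{K}_{\beta_R}^+(t)$ since $\alpha(\beta)\lle\alpha(\beta_R)$. Because $\sigma^m(\alpha(\beta_R))=(s_1\ldots s_m)^\infty=(t_i)$ and the $m$-th digit of $\alpha(\beta_R)$ equals $1$, Lemma~\ref{lem:39} yields
\[ \mathcal{K}_{\beta_R}^+(t)=\{(x_i):(s_1\ldots s_m)^\infty\lle\sigma^n((x_i))\lle(a_1\ldots a_m)^\infty\ \forall n\ge 0\}\subseteq\mathcal{Z}_m. \]
By Lemma~\ref{lem:311}(i), any element of $\mathcal{Z}_m$ beginning with the prefix $a_{j+1}\ldots a_m=s_1\ldots s_{m-j}$ equals $(s_1\ldots s_m)^\infty$, so every other element of $\mathcal{K}_{\beta_R}^+(t)$ differs from $(s_1\ldots s_m)^\infty$ within the first $m-j$ digits. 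Continuity of $\pi_\beta$ then yields a neighbourhood of $t$ in which the only point of $K_\beta^+(t)\supseteq E_\beta^+\cap[t,1)$ is $t$ itself. For isolation from below, suppose for contradiction that $(x^{(n)}_i)\in\mathcal{E}_\beta^+$ with $(x^{(n)}_i)\prec(s_1\ldots s_m)^\infty$ tends to $(s_1\ldots s_m)^\infty$. For large $n$ the agreement length $k=k(n)\ge m$; writing $k=jm+i$ with $j\ge 1$ and $0\le i<m$, we get $x^{(n)}_{k+1}=0$ and $s_{i+1}=1$, so $\sigma^{jm}((x^{(n)}_i))$ begins with $s_1\ldots s_i 0\ldots$ while $(x^{(n)}_i)$ begins with $s_1\ldots s_i 1\ldots$, giving $\sigma^{jm}((x^{(n)}_i))\prec(x^{(n)}_i)$, which contradicts $(x^{(n)}_i)\in\mathcal{E}_\beta^+$.

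\emph{Item (iii).} For $\beta>\beta_R$, I would construct an explicit sequence in $\mathcal{E}_\beta^+$ tending to $(s_1\ldots s_m)^\infty$ from above. A natural family is $(y^{(N)}_i):=((s_1\ldots s_m)^N a_1\ldots a_m^+)^\infty$ for $N\ge 1$. Two things need verification: (a) that $(s_1\ldots s_m)^N a_1\ldots a_m^+$ is Lyndon, so that all shifts of $(y^{(N)}_i)$ are $\lge(y^{(N)}_i)$; and (b) that the largest cyclic shift of $(y^{(N)}_i)$, realised by placing $a_1\ldots a_m^+$ at the front to give $(a_1\ldots a_m^+(s_1\ldots s_m)^N)^\infty$, is $\prec\alpha(\beta)$ for all sufficiently large $N$. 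The key observation for (b) is that this largest shift converges in the product topology to $a_1\ldots a_m^+(s_1\ldots s_m)^\infty=\alpha(\beta_R)$ as $N\to\infty$; since $\alpha(\beta)\succ\alpha(\beta_R)$ strictly, the shift eventually falls below $\alpha(\beta)$. This gives $(y^{(N)}_i)\in\mathcal{E}_\beta^+$ for all large $N$, and $\pi_\beta((y^{(N)}_i))\searrow t$, showing that $t$ is not isolated. \textbf{The main obstacle} is the verification of Lyndon-ness in (a); the delicate case is rotations at positions $k=Nm+r$ with $0<r<m$, which straddle the boundary between the $(s_1\ldots s_m)$-block and the $a_1\ldots a_m^+$ tail and require a careful combination of Lemma~\ref{lem:310} with the strict Lyndon property of $s_1\ldots s_m$.
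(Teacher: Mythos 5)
Your treatment of the existence of $\beta_R$, of item (i), and of item (ii) is essentially sound. For (ii) you take a slightly different route from the paper for isolation from below: instead of trapping $E_\beta^+\cap[t-\delta,t+\delta]$ inside $K_\beta^+(t-\delta)$ and invoking Lemma~\ref{lem:311}, you argue directly that no sequence in $\mathcal{E}_\beta^+$ can lie strictly below $(s_1\ldots s_m)^\infty$ while agreeing with it for at least $m$ digits (writing the agreement length as $jm+i$ and producing the shift $\sigma^{jm}$ that violates $\sigma^n((x_i))\lge (x_i)$). That argument is correct and is a clean alternative; the isolation-from-above part via Lemma~\ref{lem:39} and Lemma~\ref{lem:311}(i) matches the paper.

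Item (iii), however, contains a genuine error, and it is not where you flagged the difficulty. Your word $w_N=(s_1\ldots s_m)^N a_1\ldots a_m^+$ is in fact Lyndon (your obstacle (a) can be overcome), but your identification of its largest cyclic shift in (b) is wrong, and as a result $w_N^\infty\notin\Sigma_\beta$ for $\beta$ just above $\beta_R$. Concretely, since $s_1\ldots s_m=a_{j+1}\ldots a_m a_1\ldots a_j$, the last $s$-block of $w_N$ ends in $a_1\ldots a_j$ and is immediately followed by $a_1\ldots a_m^+$; the rotation beginning at that $a_1\ldots a_j$ reads $a_1\ldots a_j\, a_1\ldots a_m^+\cdots$, which is \emph{strictly larger} than $a_1\ldots a_m^+\cdots$ because its $(j+1)$-st digit is $a_1=1$ while $a_{j+1}=s_1=0$. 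For the simplest Farey case $s_1s_2=01$, $a_1a_2=10$, $j=1$: $w_N=(01)^N11$ contains the factor $111$, so its largest shift begins $111$, whereas $\al(\beta_R)=11(01)^\f$ begins $110$; hence for every $\beta\in(\beta_R,\beta')$ with $\al(\beta)$ still beginning $110$ (a nonempty range), $w_N^\infty$ violates \eqref{eq:22} and is not admissible for any $N$. So your approximating family does not exist inside $\mathcal{E}_\beta^+$ precisely in the regime that matters. The repair is to append only the truncated block: take $w_N=(s_1\ldots s_m)^N s_1\ldots s_{m-j}^+=(s_1\ldots s_m)^N a_{j+1}\ldots a_m^+$ (as the paper does), whose largest cyclic shift is $(a_1\ldots a_m^+(s_1\ldots s_m)^{N-1}s_1\ldots s_{m-j})^\infty$; this does converge to $\al(\beta_R)$ and falls below $\al(\beta)$ for all large $N$ whenever $\beta>\beta_R$, while Lemma~\ref{lem:34}(i)--(ii) supplies the Lyndon property of the modified word.
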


\begin{proof}
Let $\beta_L$ be as in the proposition. First we show that the interval $(\beta_L, \beta_R)$ is well-defined, i.e., $\beta_R$ exists and that $\beta_L<\beta_R$. We use the characterisation from Lemma \ref{lem:21}, so it suffices to show that  the sequence $\a=a_1\ldots a_{m}^+ (s_1\ldots s_m)^\f \in \mathcal Q$, i.e., it satisfies $\si^n(\a)\lle\a$ for all $n\ge 0$. Since $a_1 \ldots s_m$ is the Lyndon word for $a_1 \ldots a_m$, any word of length $1 \le n \le m-1$ occurring in $a_1 \ldots a_m$ is lexicographically larger than or equal to $s_1 \ldots s_n$. Combining this with Lemma~\ref{lem:310} and Lemma \ref{lem:34} (i) gives  
\[
a_{n+1}\ldots a_{m}^+ s_1\ldots s_n\lle a_1\ldots a_{m-n}a_{m-n+1}\ldots a_m\prec a_1\ldots a_{m}^+
\]for all $0<n<m$ .
So $\si^n(\a)\prec \a$ for each $0 < n < m$. Moreover, since
\begin{equation*}
\label{eq:319}
\si^n((s_1\ldots s_m)^\f) \lle   (a_1\ldots a_m)^\f\prec a_1\ldots a_{m}^+(s_1\ldots s_m)^\f
\end{equation*}
for all $n\ge 0$, we get $\si^n(\a)\prec  \a$ for all $n\ge 1$, and thus $\a \in \mathcal Q$. Lemma \ref{lem:21} then implies that $\a$ is indeed the quasi-greedy  expansion of $1$ for some base $\beta_R$, i.e., $\al(\beta_R)=a_1\ldots a_{m}^+(s_1\ldots s_m)^\f$. Since $\al(\beta_L) \prec \alpha(\beta_R)$, Lemma \ref{lem:21} also gives that $\beta_R > \beta_L$. Hence,  the interval $(\beta_L, \beta_R]$ is well-defined.
\vspace{.2cm}

Let $1 \le j \le m-1$ be such that
\[ s_1 \ldots s_m = a_{j+1} \ldots a_m a_1 \ldots a_j.\]

For (i), note that if $\beta \le \beta_L$, then $(s_1\ldots s_m)^\infty \not \in \Sigma_\beta$, since
\[ \sigma^j ((s_1\ldots s_m)^\infty) = (a_1\ldots a_m)^\infty \lge \alpha(\beta).\]
For $\beta\in(\beta_L,\beta_R]$ it follows immediately that $(s_1\ldots s_m)^\infty \in \Sigma_\beta$, since $s_1 \ldots s_m$ is the smallest permutation of $a_1 \ldots a_m$ and $(a_1\ldots a_m)^\infty \prec \alpha(\beta)$.

\vskip .2cm
For (ii), let $\beta \in (\beta_L, \beta_R]$ and set $t=\pi_\beta((s_1\ldots s_m)^\f)$.Then $b(t, \beta)=(s_1\ldots s_m)^\f\in\E_\beta^+$, so $t \in E_\beta^+$. By Lemma \ref{lem:22} and since $t$ has a periodic $\beta$-expansion, there exists a  small $\delta>0$ such that for any $x\in[t-\delta, t+\delta]$ the greedy expansion $b(x, \beta)$ has prefix $s_1\ldots s_m$. By Lemma \ref{lem:39} it follows that
\begin{equation}\label{eq:320}
\begin{split}
\K_\beta^+(t-\delta)&\subseteq \set{(x_i): s_1\ldots s_m 0^\f\lle \si^n((x_i))\prec a_1\ldots a_m^+(s_1\ldots s_m)^\f ~\forall n\ge 0}\\
&=\set{(x_i): (s_1\ldots s_m)^\f\lle \si^n((x_i))\prec a_1\ldots a_m^+(s_1\ldots s_m)^\f~ \forall n\ge 0}\\
&=\set{(x_i): (s_1\ldots s_m)^\f \lle \si^n((x_i))\lle (a_1\ldots a_m)^\f~\forall n\ge 0}\\
&=\set{(x_i): s_1\ldots s_m 0^\f\lle \si^n((x_i))\lle (a_1\ldots a_m)^\f ~\forall n\ge 0}.
\end{split}
\end{equation}
Since for any $x\in[t-\delta, t+\delta]$  the greedy expansion $b(x, \beta)$ begins with $s_1\ldots s_m$, by Lemma \ref{lem:311} (i) and (\ref{eq:320}) we obtain  that
\[
K_\beta^+(t-\delta)\cap[t-\delta, t+\delta]\subseteq\set{t}.
\]
Since $t\in E_\beta^+\cap[t-\delta, t+\delta]\subseteq K_\beta^+(t-\delta)\cap[t-\delta, t+\delta]$, we conclude that $t$ is isolated in $E_\beta^+$ for any $\beta\in(\beta_L, \beta_R]$.
\vskip .2cm

For (iii), let $\beta> \beta_R$ and again set $t=\pi_\beta((s_1\ldots s_m)^\f)$. We construct a sequence $(\t_n)$ in $\E_\beta^+$ such that $\t_n\searrow (s_1\ldots s_m)^\f$ in the order topology as $n\rightarrow \infty$.
Let 
\begin{equation}\label{eq:321}
\t_n:= ((s_1\ldots s_m)^n s_1 \ldots s_{m-j}^+)^\infty = ((a_{j+1}\ldots a_m a_1\ldots a_j)^n a_{j+1} \ldots a_m^+)^\infty.
\end{equation}

We claim that there is an $N\in \mathbb{N}$ such that $\t_n\in\E_\beta^+$ for all $n>N$. 
 Note that the largest permutation of $\t_n$ is given by
\begin{equation*}
\begin{split}
\d_n &= (a_1\ldots a_m^+ (a_{j+1}\ldots a_m a_1\ldots a_j)^{n-1} a_{j+1}\ldots a_m)^\infty\\
&= (a_1\ldots a_{m}^+ (s_1\ldots s_m)^{n-1} s_1\ldots s_{m-j})^\infty .
\end{split}
\end{equation*}
For $\beta > \beta_R$ either $\alpha_1(\beta)\ldots \alpha_m(\beta)\succ a_1\ldots a_m^+$ or there exists an $N\geq 1$, such that $\alpha(\beta)=a_1\ldots a_m^+(s_1\ldots s_m)^{N-1}b_1\ldots b_m$ with $b_1\ldots b_m\succ s_1\ldots s_m$. In the first case obviously $\d_n\prec \alpha(\beta)$. In the second case we have $\d_n\prec \alpha(\beta)$ for all $n>N$. Hence $\t_n\in \Sigma_\beta$ for all $n>N$. Moreover, from \eqref{eq:321} and Lemma \ref{lem:34} (i) it follows that $\si^i(\t_n)\lge \t_n$ for any $0<i\leq mn$ and from Lemma \ref{lem:34} (ii) it follows that the word $s_1\ldots s_{m-j}^+$ is Lyndon, which in turn implies that $\si^i(\t_n)\lge \t_n$ for any $mn <i<mn+m-j$, using Lemma \ref{lem:34} (i). Hence, $\si^i(\t_n)\lge \t_n$ for all $i\ge 0$ and thus $\t_n\in\E_\beta^+$ for all $n\ge N$. 

\vskip .2cm
We have found a sequence $(\t_n) \subseteq \mathcal E_\beta^+$ decreasing to $b(t, \beta)=(s_1\ldots s_m)^\f$ as $n\to\f$ and accordingly, a sequence $(\pi_\beta(\t_n))\subseteq  E_\beta^+$  decreasing to $t=\pi_\beta((s_1\ldots s_m)^\f)$ as $n\ra\f$. Therefore, $t$ is not isolated in $E_\beta^+$. 
\end{proof}

\subsection{Isolated points for $E_\beta$}
Recall that $C_3$ is the set of $\beta\in(1,2)$ such that the length of consecutive zeros in the   quasi-greedy expansion $\al(\beta)$ is bounded.

\begin{theorem}\label{t:notinc3}
If $\beta \in (1,2)\setminus C_3$, then both $E_\beta \cap [0,\delta]$ and $E_\beta^+\cap [0,\delta]$ contain both infinitely many isolated and accumulation points for all $\delta>0$.
\end{theorem}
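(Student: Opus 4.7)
Fix $\beta \in (1,2)\setminus C_3$ and $\delta>0$. I would split the theorem into accumulation points and isolated points, handling each of $E_\beta$ and $E_\beta^+$ in turn.

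\emph{Accumulation points in both sets.} I would combine Propositions~\ref{prop:24} and~\ref{prop:36}. Proposition~\ref{prop:24} gives $\dim_H(E_\beta \cap [0,\delta])=1$, so $E_\beta \cap [0,\delta]$ is uncountable; since $E_\beta^0$ is countable, the same holds for $E_\beta^+ \cap [0,\delta]$. As only countably many sequences in $\{0,1\}^{\mathbb N}$ are periodic, uncountably many $t \in E_\beta^+ \cap [0,\delta]$ have an aperiodic greedy $\beta$-expansion, and Proposition~\ref{prop:36} then says every such $t$ is an accumulation point of $E_\beta^+$; the additional statement in the same proposition, that no element of $E_\beta^+$ is isolated in $E_\beta$, makes each such $t$ an accumulation point of $E_\beta$ as well.

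\emph{Isolated points of $E_\beta^+$.} By Proposition~\ref{prop:312}(ii), producing for arbitrarily large $N$ a basic interval $(\beta_L,\beta_R]\ni\beta$ whose Lyndon word $s_1\ldots s_m$ begins with at least $N$ zeros yields an isolated point $\pi_\beta((s_1\ldots s_m)^\infty)\le \beta^{-N}/(\beta-1)$ in $E_\beta^+$, which tends to $0$ as $N\to\infty$. The input is $\beta\notin C_3$: $\alpha(\beta)$ contains zero-runs of arbitrary length. Fix $N$ and choose a position $k$ with $\alpha_k(\beta)=1$ such that the prefix $\alpha_1\ldots \alpha_{k-1}$ already contains a zero-run of length at least $N$, and set $a_1\ldots a_m := \alpha_1\ldots\alpha_{k-1}0$, i.e.\ the decrement of $\alpha_1\ldots\alpha_k$ at its last position. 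The $\mathcal Q$-condition $\sigma^j\alpha(\beta)\lle\alpha(\beta)$ combined with $\alpha_k=1$ should force $a_1\ldots a_m$ to be the maximal cyclic permutation of its class (the key case-analysis: in the equality case $\alpha_j\ldots\alpha_{k-1}=\alpha_1\ldots\alpha_{k-j}$ the $\mathcal Q$-condition yields $\alpha_{k-j+1}\ge\alpha_k=1$, which breaks the tie in favour of $a$), so $(a_1\ldots a_m)^\infty\in\mathcal Q$ defines $\beta_L$; and the Lyndon rotation $s_1\ldots s_m$ starts at the long zero-run inside $\alpha_1\ldots\alpha_{k-1}$, hence begins with at least $N$ zeros. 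The left inequality $(a_1\ldots a_m)^\infty\prec\alpha(\beta)$ is automatic from the decrement at position $k$; the right inequality $\alpha(\beta)\lle a_1\ldots a_m^+(s_1\ldots s_m)^\infty = \alpha_1\ldots\alpha_k(s_1\ldots s_m)^\infty$ is the delicate point and will force a careful selection of $k$, matching the tail $\alpha_{k+1}\alpha_{k+2}\ldots$ against $s_1\ldots s_m$ via the $\mathcal Q$-condition and the precise layout of the zero-runs of $\alpha(\beta)$. This last step is what I expect to be the main obstacle, since the naive choice ``any $k$ with $\alpha_k\alpha_{k+1}=10$'' is necessary but not obviously sufficient.

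\emph{Isolated points of $E_\beta$.} Proposition~\ref{prop:36} states that no element of $E_\beta^+$ is isolated in $E_\beta$, so isolated points of $E_\beta$ must lie in $E_\beta^0$. I would use the explicit family $t_N := 1/\beta^{N+1} = \pi_\beta(0^N 1\, 0^\infty)$. A direct lex check shows $0^N 1\, 0^\infty\in\mathcal E_\beta^0$ for every $N\ge 1$ (each non-trivial shift begins with strictly fewer zeros and is therefore strictly lex larger), so $t_N\in E_\beta^0$. By Proposition~\ref{p:e0right}, $t_N$ is isolated in $E_\beta$ provided $\beta-1\notin K_\beta(t_N)$, i.e.\ provided $T_\beta^n(\beta-1)<t_N$ for some $n$. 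But the orbit of $\beta-1$ under $T_\beta$ is encoded by the shifts $\sigma^n(\alpha_2(\beta)\alpha_3(\beta)\ldots)$, and $\beta\notin C_3$ says the tails of $\alpha(\beta)$ contain zero-runs of arbitrary length, so $\liminf_n T_\beta^n(\beta-1)=0$. Consequently $t_N$ is isolated in $E_\beta$ for every sufficiently large $N$, producing infinitely many isolated points of $E_\beta$ in $[0,\delta]$.
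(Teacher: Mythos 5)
Your arguments for the accumulation points and for the isolated points of $E_\beta$ are correct and essentially identical to the paper's: the accumulation points come from $\dim_H(E_\beta\cap[0,\delta])=1$ in Proposition~\ref{prop:24}, and the points $\beta^{-n}\in E_\beta^0$ are isolated in $E_\beta$ by Proposition~\ref{p:e0right} because the unbounded zero-runs in $\alpha(\beta)$ drive the orbit of $\beta-1$ into every interval $(0,t)$.

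The gap is exactly where you flagged it, in the isolated points of $E_\beta^+$: you set up the right construction (decrement $\alpha_1\cdots\alpha_k$ at a suitable position $k$ with $\alpha_k=1$, take $\beta_L$ with $\alpha(\beta_L)=(a_1\ldots a_m)^\infty$, and invoke Proposition~\ref{prop:312}(ii)), but you leave the verification of $\alpha(\beta)\lle a_1\ldots a_m^+(s_1\ldots s_m)^\infty$, and hence the actual choice of $k$, unresolved; without it you have not produced a single isolated point, let alone infinitely many in $[0,\delta]$. The paper's resolution is to truncate immediately before each \emph{record} zero-run. Precisely, write $\alpha(\beta)=1^{l_1}0^{m_1}1^{l_2}0^{m_2}\cdots$, let $i_0=1$ and let $i_k>i_{k-1}$ be the smallest index with $m_{i_k}>m_{i_{k-1}}$ (so $m_{i_k}>m_j$ for all $j<i_k$, and $m_{i_k}\to\infty$ precisely because $\beta\notin C_3$), and set $a_1\ldots a_m:=1^{l_1}0^{m_1}\cdots 1^{l_{i_k}-1}0$. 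Every zero-run occurring in this prefix has length at most $m_{i_{k-1}}$, so the Lyndon rotation $s_1\ldots s_m$ begins with $0^{m_{i_{k-1}}}1$; therefore
\[
a_1\ldots a_m^+(s_1\ldots s_m)^\infty=1^{l_1}0^{m_1}\cdots 1^{l_{i_k}}0^{m_{i_{k-1}}}1\cdots\;\succ\;1^{l_1}0^{m_1}\cdots 1^{l_{i_k}}0^{m_{i_k}}1\cdots=\alpha(\beta),
\]
because $m_{i_k}>m_{i_{k-1}}$. Combined with the automatic $(a_1\ldots a_m)^\infty\prec\alpha(\beta)$, this places $\beta$ in the basic interval $(\beta_{L,k},\beta_{R,k})$, and Proposition~\ref{prop:312}(ii) yields the isolated point $\pi_\beta((s_1\ldots s_m)^\infty)\le\beta^{-m_{i_{k-1}}}/(\beta-1)\to 0$ as $k\to\infty$. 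So the single idea you are missing is that the truncation point must be taken at the successive records of the zero-run lengths: this simultaneously makes the Lyndon word begin with a long block of zeros and supplies the strict right-hand inequality that you correctly identified as the obstacle.
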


%First we prove the sufficiency of Theorem \ref{th:31}, which said that the set $E_\beta\cap [0,\delta]$ contains both infinitely many isolated and accumulation points for all $\delta>0$ if and only if $\beta\in(1,2)\setminus C_3$. 
\begin{proof}
By Proposition \ref{prop:24} it follows that $E_\beta \cap[0, \de]$ and $E_\beta^+\cap[0, \de]$ contain infinitely many accumulation points for all $\de>0$, so we focus on the isolated points. Fix $\beta\in(1,2)\setminus C_3$. Then $\al(\beta)$ contains consecutive zeros of arbitrary length. Hence, $\alpha(\beta)$ is not periodic and the orbit of 1 under $\widetilde T_\beta$ will come arbitrarily close to $0$. This implies that for any $t>0$, $\beta -1 \not \in K_\beta(t)$ and thus by Proposition~\ref{p:e0right} any $t \in E_\beta^0 \setminus \{0\}$ will be isolated in $E_\beta$. Note that for any $n \ge 1$ we have $\frac1{\beta^n} \in E_\beta^0$. This gives the statement for $E_\beta$.

\vskip .2cm
To prove that $E_\beta^+$ contains infinitely many isolated points arbitrarily close to 0, we construct by induction a sequence of intervals $(\beta_{L,k}, \beta_{R, k}),  k\ge 1,$ such that $\beta\in (\beta_{L,k}, \beta_{R, k})$ for all $k\ge 1$, where $(\beta_{L,k}, \beta_{R, k})$ is defined as in Proposition  \ref{prop:312}. Write
\begin{equation}\label{eq:327}
\al(\beta)=1^{l_1}0^{m_1}1^{l_2}0^{m_2}\ldots1^{l_k}0^{m_k}\ldots.
\end{equation}
Since $\al(\beta)$ does not end with $0^\f$, we have $m_k\in\set{1,2,\ldots}$ for all $k\ge 1$. Furthermore, from $\beta\notin C_3$ we get $\sup_{k\ge 1}m_k=\f$. 

\vskip .2cm
Set $i_0=1$ and let $i_1>i_0$ be the smallest index for which $m_{i_1}>m_1$. Set $\a_1:=1^{l_1}0^{m_1}\cdots 1^{l_{i_1}-1}0$. Then $\si^n(\a_1^\f)\lle\a_1^\f$ for all $n\ge 0$, and then by Lemma \ref{lem:21}  the sequence $\a_1^\f$ is the quasi-greedy expansion of $1$ for some base $\beta_{L, 1}$, i.e., $\al(\beta_{L, 1})=\a_1^\f$. Note that the word $\a_1$ contains consecutive zeros of length at most $m_1$. So the Lyndon word $\s_1=s_1\ldots s_{l_1+m_1+\cdots+ l_{i_1}}$ for $\a_1$ begins with $0^{m_1}1$. By using $m_{i_1}>m_1$ and (\ref{eq:327}) it follows that
\[
\al(\beta_{L,1})=\a_1^\f=(1^{l_1}0^{m_1}\cdots 1^{l_{i_1}-1}0)^\f\prec 1^{l_1}0^{m_1}\cdots1^{l_{i_1}}0\cdots=\al(\beta),
\]
and
\[
\al(\beta_{R, 1})=\a_1^+\, \s_1 ^\f=1^{l_1}0^{m_1}\cdots 1^{l_{i_1}}0^{m_1}1\cdots\succ1^{l_1}0^{m_1}\cdots 1^{l_{i_1}}0^{m_{i_1}}1\cdots= \al(\beta).
\]
By Lemma \ref{lem:21} we have $\beta\in (\beta_{L,1}, \beta_{R, 1})$. Moreover,  by Proposition \ref{prop:312} $\pi_\beta(\s_1^\f)$ is an isolated point of $E_\beta^+$.
Now we pick $i_k$ using $i_{k-1}$. Let $i_k>i_{k-1}$ be the smallest index such that $m_{i_k}>m_{i_{k-1}}$. Then by the definitions of $i_1, \ldots, i_{k-1}$ it follows that $m_{i_k}>m_j$ for all $j<i_k$. Set $\a_k:=1^{l_1}0^{m_1}\cdots 1^{l_{i_k}-1}0$. Then the block $\a_k$ contains consecutive zeros of length at most $m_{i_{k-1}}$. So the Lyndon word $\s_k=s_1\ldots s_{l_1+m_1+\cdots+l_{i_k}}$ for $\a_k$ begins with $0^{m_{i_{k-1}}}1$. We obtain
\begin{align*}
\al(\beta_{L, k})&=\a_k^\f  =(1^{l_1}0^{m_1}\cdots 1^{l_{i_k}-1}0)^\f\prec1^{l_1}0^{m_1}\cdots 1^{l_{i_k}}0\cdots= \al(\beta),\\
\al(\beta_{R,k})&=\a_k^+\, \s_k^\f=1^{l_1}0^{m_1}\cdots 1^{l_{i_k}}0^{m_{i_{k-1}}}1\cdots\succ 1^{l_1}0^{m_1}\cdots 1^{l_{i_k}} 0^{m_{i_k}}1\cdots=\al(\beta).
\end{align*}
Therefore, $\beta\in (\beta_{L,k}, \beta_{R,k})$ and by Proposition  \ref{prop:312} $\pi_\beta(\s_k^\f)$ is an isolated point of $E_\beta^+$.

\vskip .2cm
By induction we construct a sequence of intervals $(\beta_{L,k}, \beta_{R, k}), k\ge 1$, such that $\beta\in (\beta_{L,k}, \beta_{R, k})$ for all $k\ge 1$. Moreover, the points $\pi_\beta(\s_k^\f)$, $k\ge 1$, are isolated in $E_\beta^+$. Note that $\s_k$ begins with a block $0^{m_{i_{k-1}}}1$ for any $k\ge 1$ and $m_{i_{k-1}}$ strictly increases to $\f$ as $k\ra\f$. This implies that  $E_\beta^+\cap[0, \de]$ contains infinitely many isolated points for any $\de>0$.
\end{proof}

\begin{theorem}\label{t:inc3}
For $\beta \in C_3$ there is a $\delta >0$, such that $E_\beta^+\cap[0, \de]$ has no isolated points.
\end{theorem}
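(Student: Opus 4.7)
The plan is to argue by contradiction: I will suppose that $E_\beta^+\cap[0,\delta]$ has an isolated point $t$ for every $\delta>0$, and derive a combinatorial contradiction once $\delta$ is chosen small enough relative to the bound on runs of zeros in $\al(\beta)$. Since $\beta\in C_3$, fix $M\in\N$ such that $\al(\beta)$ contains no block $0^{M+1}$, and take $\delta:=\beta^{-(M+2)}$, so that every $t\in[0,\delta)$ has $b(t,\beta)$ beginning with $0^{M+2}$. Note first that $t=0$ is never isolated: the sets $E_{\beta,N}$ used in the proof of Proposition~\ref{prop:24} actually lie in $E_\beta^+$, so $\dim_H(E_\beta^+\cap[0,\delta])=1$ for any $\delta>0$, and in particular points of $E_\beta^+$ accumulate at $0$.

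Now pick an isolated $t>0$. By Proposition~\ref{prop:36} its greedy expansion is periodic, say $b(t,\beta)=(s_1\ldots s_m)^\f$ with minimal period $m$, and $t\in E_\beta^+$ forces $s_1\ldots s_m$ to be a Lyndon word. Its first $M+2$ letters are zero, and the Lyndon inequality $\si^n((s_1\ldots s_m)^\f)\succ(s_1\ldots s_m)^\f$ for $0<n<m$ rules out any longer run of zeros appearing later, so the leading run has length $L\ge M+2$ and equals the maximum run of zeros in $s_1\ldots s_m$. Let $a_1\ldots a_m$ be the largest cyclic permutation, so that Proposition~\ref{prop:312} yields bases $\beta_L<\beta_R$ with $\al(\beta_L)=(a_1\ldots a_m)^\f$ and $\al(\beta_R)=a_1\ldots a_m^+(s_1\ldots s_m)^\f$. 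Since $a_1=1$, the block $0^L=s_1\ldots s_L$ sits inside $a_1\ldots a_m$ at positions $i,\ldots,i+L-1$ with $2\le i\le m-L+1$; in particular $a_1\ldots a_{m-1}$ always contains $0^{L-1}$ and $a_1\ldots a_m$ contains $0^L$. This is the combinatorial input that will drive the contradiction.

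Because $(s_1\ldots s_m)^\f\in\Sigma_\beta$, Proposition~\ref{prop:312}(i) gives $\beta>\beta_L$, equivalently $\al(\beta)\succ(a_1\ldots a_m)^\f$. The crucial step, and the main obstacle, is to upgrade this to $\al(\beta)\succ\al(\beta_R)$, for then Proposition~\ref{prop:312}(iii) will say that $t$ is not isolated in $E_\beta^+$, contradicting the assumption. To handle this, let $k$ be the first index where $\al(\beta)$ and $(a_1\ldots a_m)^\f$ disagree. If $k>m$, then $\al(\beta)$ begins with $a_1\ldots a_m$, which already contains $0^L$ with $L>M$, an impossible run. If $k=m$, then $\al(\beta)$ begins with $a_1\ldots a_{m-1}1$, and the embedded $0^{L-1}$ gives a run of length $\ge M+1$, again impossible. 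Hence $k\le m-1$; since $a_1\ldots a_m^+$ agrees with $a_1\ldots a_m$ on the first $m-1$ positions, $\al(\beta)$ and $\al(\beta_R)$ coincide on positions $1,\ldots,k-1$ while $\al_k(\beta)=1>0=a_k=\al_k(\beta_R)$. Therefore $\al(\beta)\succ\al(\beta_R)$ and $\beta>\beta_R$ by Lemma~\ref{lem:21}, finishing the contradiction and proving that $E_\beta^+\cap[0,\delta]$ contains no isolated points.
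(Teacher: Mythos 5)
Your proof is correct and follows essentially the same route as the paper: reduce to a periodic Lyndon expansion via Proposition~\ref{prop:36}, pass to the maximal cyclic permutation to place $\beta$ relative to the basic interval $(\beta_L,\beta_R]$ of Proposition~\ref{prop:312}, and derive a contradiction from the bounded runs of zeros in $\al(\beta)$. The only (cosmetic) difference is that you show $\al(\beta)\succ\al(\beta_R)$ directly and then invoke Proposition~\ref{prop:312}(iii), whereas the paper uses isolatedness to get $\al(\beta)\lle\al(\beta_R)$ and reads off the forbidden block $0^{M+1}$ from the resulting sandwich --- the same combinatorial content in contrapositive form.
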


\begin{proof}
Fix $\beta\in C_3$. Then the length of consecutive zeros in $\al(\beta)$ is bounded by some large integer $M$. Set $\de=\frac1{\beta^{M+3}}=\pi_\beta(0^{M+2}10^\f)$. To show that $E_\beta^+\cap[0, \de]$ has no isolated points, suppose on the contrary that $t$ is an isolated point of $E_\beta^+\cap[0, \de]$. By Proposition \ref{prop:36} it follows that the greedy $\beta$-expansion $b(t, \beta)$ of $t$ is periodic, namely
\begin{equation*}
b(t, \beta)=(t_1\ldots t_m)^\f\in\E_\beta^+
\end{equation*}
with minimal period $m$. Moreover, $t_1\ldots t_m$ is Lyndon. For $m=1$ we get that $t=0$, which by Proposition~\ref{prop:24} is not isolated in $E_\beta^+$. Let $m\geq 2$ and let $a_1\ldots a_m$ be the maximal cyclic permutation of $t_1\ldots t_m$. Then $(a_1\ldots a_m)^\f \in \mathcal Q$, so by Lemma \ref{lem:21} it is the quasi-greedy expansion of $1$ for some base $\beta_L$, i.e., $\al(\beta_L)=(a_1\ldots a_m)^\f$. By Proposition \ref{prop:312} it follows that $\beta\in (\beta_L, \beta_R]$, where $\beta_R$ is the unique base satisfying
\[
\al(\beta_R)=a_1\ldots a_m^+(t_1\ldots t_m)^\f.
\]
Hence,
\begin{equation}\label{eq:329}
(a_1\ldots a_m)^\f\prec \al(\beta)\lle a_1\ldots a_m^+(t_1\ldots t_m)^\f.
\end{equation}
Since $t\le \de=\pi_\beta(0^{M+2}10^\f)$, we have $(t_1\ldots t_m)^\f=b(t, \beta)\lle 0^{M+2}10^\f$. So $t_1\ldots t_m$ begins with $M+2$ consecutive zeros and $a_1\ldots a_m$ contains $M+2$ consecutive zeros. Hence, by (\ref{eq:329}) we conclude that $\al(\beta)$ contains $M+1$ consecutive zeros, leading to a contradiction with our hypothesis that the number of consecutive zeros in $\al(\beta)$ is bounded by $M$.
\end{proof}

\begin{proof}[Proof of Theorem~\ref{main:2}]
The first part of the statement follows from Proposition~\ref{prop:24} and Theorem~\ref{t:notinc3}, since $\lambda (C_3)=0$ by the results from \cite{Sch97}. The fact from \cite{Sch97} that $\dim_H C_3=1$ together with Theorem~\ref{t:inc3} gives the last part of the result.
\end{proof}

\section{When $E_\beta^+$ does not have isolated points}\label{sec:4}
In this section we prove Theorem \ref{main:3}, which states that the set of $\beta\in(1,2)$ for which $E_\beta^+$ has no isolated points is rather small, it has zero Hausdorff dimension. The theorem is obtained by showing that the intervals $(\beta_L,\beta_R]$ introduced in the previous section cover all but a Hausdorff dimension zero part of the interval $(1,2)$. Figure \ref{fig:3} suggests that the basic intervals are nested. In Proposition~\ref{prop:313} below we prove that this is indeed the case. Subsequently, we identify those intervals $(\beta_L, \beta_R]$ that are not contained in any other basic interval, which turn out to be te ones given by a specific subset of the Lyndon words, called Farey words.

\begin{proposition}\label{prop:313}
Let $I_1=(\beta_L, \beta_R]$ and $I_2=(\tilde \beta_L, \tilde\beta_R]$ be two different basic intervals.  If $I_1\cap I_2\ne \emptyset$, then $I_1\subset I_2$ or $I_2\subset I_1$.
\end{proposition}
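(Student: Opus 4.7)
The plan is to argue by contradiction. Let $w = a_1 \ldots a_m$ and $v = c_1 \ldots c_n$ be the defining words for $I_1 = (\beta_L,\beta_R]$ and $I_2 = (\tilde\beta_L,\tilde\beta_R]$ coming from Proposition~\ref{prop:312}, and let $s,r$ denote their associated Lyndon words, so that $\alpha(\beta_L) = w^\infty$, $\alpha(\beta_R) = w^+ s^\infty$, $\alpha(\tilde\beta_L) = v^\infty$, and $\alpha(\tilde\beta_R) = v^+ r^\infty$. Without loss of generality $\beta_L < \tilde\beta_L$ (equality would force $v^\infty = w^\infty$, hence $I_1 = I_2$ by minimality of the period). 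Suppose for contradiction that $\tilde\beta_R > \beta_R$. By the strict monotonicity of $\beta \mapsto \alpha(\beta)$ (Lemma~\ref{lem:21}), the intersection hypothesis together with this assumption gives
\[
w^\infty \prec v^\infty \prec w^+ s^\infty \prec v^+ r^\infty.
\]

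The key observation is that $w^+ s^\infty$ must begin with the word $v$. Indeed, $v^\infty$ and $v^+ r^\infty$ share the prefix $c_1 \ldots c_{n-1}$ and first disagree at position $n$, where they take the values $0$ and $1$. Any sequence strictly between them in the lexicographic order must agree with $c_1 \ldots c_{n-1}$ on the first $n-1$ positions and take value $0$ at position $n$, so $w^+ s^\infty$ begins with $c_1 \ldots c_n = v$. I now split into cases according to the relation between $n$ and $m$.

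If $n = m$, then position $m$ of $w^+ s^\infty$ equals both $a_m^+ = 1$ and $c_n = 0$, a direct contradiction. If $n > m$, the first $n$ coordinates of $w^+ s^\infty$ read $v = w^+ s_1 \ldots s_{n-m}$; cancelling the common prefix $v$ of length $n$ from $v^\infty \prec w^+ s^\infty$ and using $\sigma^n(v^\infty) = v^\infty$ yields $v^\infty \prec \sigma^{n-m}(s^\infty)$. But $\sigma^{n-m}(s^\infty)$ is a cyclic shift of $s^\infty$, hence a cyclic shift of $w^\infty$, and maximality of $w^\infty$ among its cyclic shifts gives $\sigma^{n-m}(s^\infty) \lle w^\infty$. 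Thus $v^\infty \prec w^\infty$, contradicting $v^\infty \succ w^\infty$.

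The main case is $n < m$, where $v = a_1 \ldots a_n$ is a prefix of $w$. A coordinate comparison shows $v^\infty$ and $w^\infty$ agree on positions $1, \ldots, m-1$, so $a_1 \ldots a_{m-1}$ has period $n$; writing $m - 1 = qn + r$ with $0 \le r < n$, the inequality $v^\infty \succ w^\infty$ at position $m$ forces $a_{r+1} = 1$, so $v^\infty$ and $w^+ s^\infty$ agree through position $m$. Shifting $v^\infty \prec w^+ s^\infty$ by $m$ and using periodicity of $v^\infty$ then gives $\sigma^{r+1}(v^\infty) \lle s^\infty$ (with the degenerate case $r+1 = n$ handled directly: there $v^\infty_{m+1} = a_1 = 1 > 0 = s_1 = (w^+ s^\infty)_{m+1}$, already contradicting $v^\infty \prec w^+ s^\infty$). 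On the other hand, $\sigma^{r+1}(v^\infty)$ and $\sigma^{r+1}(w^\infty)$ first disagree at position $m - r - 1$, where they take the values $a_{r+1} = 1$ and $a_m = 0$ respectively, so $\sigma^{r+1}(v^\infty) \succ \sigma^{r+1}(w^\infty)$. Combined with $s^\infty \lle \sigma^{r+1}(w^\infty)$ (minimality of $s^\infty$ among cyclic shifts of $w^\infty$) this yields $s^\infty \prec \sigma^{r+1}(v^\infty)$, contradicting the inequality $\sigma^{r+1}(v^\infty) \lle s^\infty$. The main difficulty I anticipate is the index bookkeeping in this last case and handling the degenerate sub-cases; the conceptual engine is uniform across the two nontrivial cases: shift the inequality $v^\infty \prec w^+ s^\infty$ by $n$ or $m$, and invoke the extremality of $w^\infty$ (respectively $s^\infty$) among cyclic shifts of $w^\infty$.
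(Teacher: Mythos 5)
Your overall strategy (assume the intervals cross, derive $w^\infty \prec v^\infty \prec w^+s^\infty \prec v^+r^\infty$, and contradict by comparing periods) is sound and differs from the paper, which instead proves directly that $\tilde\beta_L \in I_1$ forces $\tilde\beta_R < \beta_R$. However, your ``key observation'' contains a genuine gap. The claim that any sequence strictly between $v^\infty$ and $v^+r^\infty$ must carry the digit $0$ at position $n$ is false: such a sequence may equally well begin with $v^+ = c_1\ldots c_{n-1}1$ provided its tail after position $n$ is $\prec r^\infty$. Concretely, with $v = 1100$ (so $r = 0011$), the sequence $1101\,0^\infty$ lies strictly between $(1100)^\infty$ and $1101(0011)^\infty$ and has a $1$ in position $4$. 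Worse, this is not a peripheral issue: in the actual nested configurations with $n>m$ the prefix of $w^+s^\infty$ of length $n$ is typically $v^+$, not $v$ (e.g.\ $w=10$, $v=1100$: here $w^+s^\infty = 110101\ldots$ begins with $v^+=1101$). Your argument for $n>m$ only disposes of the sub-case where $v$ is a prefix; the sub-case where $v^+$ is a prefix --- which is where the real content of the proposition lives, corresponding to the paper's inequality $b_1\ldots b_n^+ \lle a_1\ldots a_m^+(s_1\ldots s_m)^{k-1}s_1\ldots s_j$ and its cases (I)/(II) --- is simply not addressed. The statement you need is repairable: if $v^+ = w^+s_1\ldots s_p$ with $p=n-m$ and $s_p=1$, then $r^\infty \lle \sigma^m(v^\infty) = s_1\ldots s_{p-1}0\,v^\infty \prec s^\infty \lle \sigma^p(s^\infty)$, so $v^+r^\infty \prec w^+s^\infty$, contradicting the standing chain; but this Lyndon/minimal-rotation argument is exactly the missing step, not a cosmetic fix.

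There is a second, smaller gap in the case $n<m$: from the agreement of $v^\infty$ and $w^\infty$ on positions $1,\ldots,m-1$ and $v^\infty \succ w^\infty$ you conclude that the disagreement occurs \emph{at} position $m$, forcing $a_{r+1}=1$. A priori the two sequences could also agree at position $m$ (both equal to $0$) and first differ later. Ruling this out requires an extra argument: if they agreed through position $m$, then $\sigma^{qn}(w^\infty) = a_1\ldots a_{r+1}w^\infty \lle w^\infty$ combined with $\sigma^{r+1}(w^\infty)\lle w^\infty$ forces $\sigma^{r+1}(w^\infty)=w^\infty$, contradicting the minimality of the period $m$. With these two repairs your proof goes through and is arguably cleaner than the paper's in the remaining sub-cases, but as written both the central observation and the position-$m$ claim are asserted rather than proved.
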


\begin{proof}
Suppose $I_1=(\beta_L, \beta_R]$ is parameterised by the  word $a_1\ldots a_m$ and $I_2=(\tilde \beta_L, \tilde\beta_R]$ is parameterised by the word $b_1\ldots b_n$, i.e., 
\begin{align*}
\al(\beta_L)&=(a_1\ldots a_m)^\f, \quad \al(\beta_R)=a_1\ldots a_m^+(s_1\ldots s_m)^\f;\\
\al(\tilde\beta_L)&=(b_1\ldots b_n)^\f, \quad \al(\tilde\beta_R)=b_1\ldots b_n^+(t_1\ldots t_n)^\f,
\end{align*}
where $s_1\ldots s_m$ is the Lyndon word for $a_1\ldots a_m$, and $t_1\ldots t_n$ is the Lyndon word for $b_1\ldots b_n$. 
Since $I_1\cap I_2\ne\emptyset$, by symmetry we may assume $\tilde\beta_L\in I_1=(\beta_L, \beta_R]$. We are going to show that $\tilde\beta_R<\beta_R$, which by Lemma \ref{lem:21} is equivalent to showing
\begin{equation}\label{eq:323}
b_1\ldots b_n^+(t_1\ldots t_n)^\f\prec a_1\ldots a_m^+(s_1\ldots s_m)^\f.
\end{equation} 
Since $\beta_L<\tilde\beta_L\le \beta_R$, by Lemma \ref{lem:21} it follows that 
\begin{equation}
\label{eq:324}
(a_1\ldots a_m)^\f\prec (b_1\ldots b_n)^\f\lle a_1\ldots a_m^+(s_1\ldots s_m)^\f.
\end{equation}
We claim that $n> m$.
\begin{itemize}
\item If $n<m$, then by (\ref{eq:324}) we have $b_1\ldots b_n=a_1\ldots a_n$. Write $m=u n+r$ with $u\ge 1$ and $1 \le r\le n$. By Lemma \ref{lem:310} and (\ref{eq:324}) it follows that $a_1 \ldots a_{un} = (b_1 \ldots b_n)^u$ and $b_1 \ldots b_r = a_1 \ldots a_r = a_{un+1} \ldots a_m^+$, so
\[
a_1\ldots a_m=(b_1\ldots b_n)^u b_1\ldots b_r^-.
\]
By using that $s_1\ldots s_m$ is the Lyndon word for $a_1\ldots a_m$ we obtain  that 
\begin{align*}
a_1\ldots a_m^+(s_1\ldots s_m)^\f&=(b_1\ldots b_n)^u b_1\ldots b_r(s_1\ldots s_m)^\f\\
&\lle (b_1\ldots b_n)^u b_1\ldots b_r(b_{r+1}\ldots b_n b_1\ldots b_r^-(b_1\ldots b_n)^{u-1}b_1\ldots b_r)^\f\\
&\prec (b_1\ldots b_n)^\f,
\end{align*}
leading to a contradiction with (\ref{eq:324}).

\item If $n=m$, then by (\ref{eq:324}) we have $b_1\ldots b_m=a_1\ldots a_m$ or $b_1\ldots b_m=a_1\ldots a_m^+$. Both cases contradict (\ref{eq:324}). 
\end{itemize}

Therefore, $n>m$.  Write $n=k m+j$ with $k\ge 1$ and $1 \le j\le m$. By (\ref{eq:324}) we have 
\[ b_1\ldots b_n\lle a_1\ldots a_m^+(s_1\ldots s_m)^{k-1} s_1\ldots s_j.\]
From 
\[
s_{j+1}\ldots s_m s_1\ldots s_j\lle a_1\ldots a_m\prec a_1\ldots a_m^+
\]
one can easily see that 
\[
(a_1\ldots a_m^+(s_1\ldots s_m)^{k-1}s_1\ldots s_j)^\infty \succ  a_1\ldots a_m^+(s_1\ldots s_m)^\f \lge (b_1 \ldots b_n)^\infty.
\]
So $b_1\ldots b_n\neq a_1\ldots a_m^+(s_1\ldots s_m)^{k-1}s_1\ldots s_j$ and hence,
\begin{equation}
\label{eq:326}
b_1\ldots b_n^+\lle a_1\ldots a_m^+(s_1\ldots s_m)^{k-1}s_1\ldots s_j.
\end{equation}
If strict inequality holds in (\ref{eq:326}), then (\ref{eq:323}) follows immediately and we are done. Suppose that the equality holds in (\ref{eq:326}).
We split the proof of (\ref{eq:323}) into the following two cases. 
\newline
\textbf{(I)} $1\le j\le \frac{m}{2}$. 
Since $s_1\ldots s_m$ is a Lyndon word, it follows that 
\[s_1\ldots s_j^-\prec s_1\ldots s_j\lle s_{j+1}\ldots s_{2j}. \]
Furthermore, $t_1\ldots t_n$ is the Lyndon word for
\[ b_1\ldots b_n=a_1\ldots a_m^+(s_1\ldots s_m)^{k-1}s_1\ldots s_j^-.\]
Then
\begin{align*}
(t_1\ldots t_n)^\f&\lle (s_1\ldots s_j^- a_1\ldots a_m^+(s_1\ldots s_m)^{k-1})^\f\\
&\prec (s_{j+1}\ldots s_{2j}s_{2j+1}\ldots s_m s_1\ldots s_j)^\f.
\end{align*}
 By (\ref{eq:326}) this proves (\ref{eq:323}) as required.
\newline
\textbf{(II)} $\frac{m}{2}<j\le m$. Since $s_1\ldots s_m$ and $t_1\ldots t_n$ are both Lyndon words, by Lemma \ref{lem:34} (i) it follows that 
 \begin{align*}
 (t_1\ldots t_n)^\f&\lle (s_1\ldots s_{m-j} s_{m-j+1}\ldots s_j^- a_1\ldots a_m^+(s_1\ldots s_m)^{k-1})^\f\\
 &\prec (s_{j+1}\ldots s_m s_1\ldots s_j)^\f.
 \end{align*}
 Again, we established (\ref{eq:323}).
\end{proof}

\subsection{Farey words}
The set of Farey words is constructed recursively as follows. Let $F_0$ be the ordered set containing the two words 0 and 1, i.e., $F_0:=(0,1)$. For each $n \ge 1$, $F_n = ( v_1, \ldots , v_{2^n+1})$ is the ordered set obtained from $F_{n-1} = (w_1, \ldots, w_{2^{n-1}+1})$ by:
\[ \begin{array}{llll}
v_{2i-1}&:=w_i \quad &&\text{for } 1\leq i\leq 2^{n-1}+1, \\
v_{2i}&:=w_iw_{i+1} \quad && \text{for } 1\leq i \leq 2^{n-1},
\end{array}\]
where $w_iw_{i+1}$ denotes the concatenation of the words $w_i$ and $w_{i+1}$. For example,
\[ F_0=(0,1), \quad  F_1 = (0,01,1), \quad F_2 = (0,001,01,011,1).\]
Then a word  $w\in \{0,1\}^*$ is a {\em Farey word} if there is an $n\geq 0$ such that $\omega\in F_n$. For each $n\ge 0$ the words in $F_n$ are listed from left to right in a lexicographically increasing order (cf.~\cite[Lemma 2.2]{CIT13}). In particular, no Farey word is periodic. Let
\[ \mathcal F:=\bigcup_{n\geq 0} F_n\setminus\set{0, 1}. \]
be the set of  \emph{non-degenerate} Farey words. Clearly, any $w_1\ldots w_m\in \mathcal F$ has $w_1=0 = 1-w_m$. We recall  from \cite[Proposition 2.3]{CIT13}  the following definition.
\begin{definition}
\label{def:41}
Let $w=w_1\ldots w_m\in \mathcal F$. A decomposition $w=uv$ is called the \emph{standard factorisation} of $w$ if $u$ and $v$ are both Farey words.
\end{definition}
By the construction of $F_n$ the standard factorisation of a non-degenerate  Farey word $w_1\ldots w_m$  is \emph{unique}.
We list some properties of Farey words. The proofs can be found in \cite[Propositions 2.8 and 2.9]{CIT13}.
\begin{enumerate}
\item[(f1)] For $w_1\ldots w_m\in\mathcal F$, both $w_1\ldots w_{m-1}0$ and $1w_2\ldots w_m$ are palindromes, i.e.,
\[
w_2\ldots w_{m-1}=w_{m-1}\ldots w_2.
\]

\item[(f2)]  Suppose $w_1\ldots w_m\in\mathcal F$ has standard factorisation $(w_1\ldots w_{m_1})(w_{m_1+1}\ldots w_m)$. The lexicographically  largest cyclic permutation of $w_1\ldots w_m$ is given by
\[
w_{m-m_1+1}\ldots w_m w_1\ldots w_{m-m_1}=w_mw_{m-1}\ldots w_2w_1.
\]

\item[(f3)]  Suppose $w_1\ldots w_m\in\mathcal F$ has standard factorisation $(w_1\ldots w_{m_1})(w_{m_1+1}\ldots w_m)$.  Then $w_1\ldots w_m$ is a Lyndon word and its lexicographically second smallest cyclic permutation is $w_{m_1+1}\ldots w_mw_1\ldots w_{m_1}$.
\end{enumerate}

\vskip .2cm
Recall that for $w_1 \ldots w_m \in \{0,1\}^*$, $\overline{w_1\ldots w_m}=(1-w_1)(1-w_2)\ldots(1-w_m)$. By Lemma \ref{lem:34} (i) it follows that if $w_1 \ldots w_m \in \mathcal F$, then $(\overline{w_1\ldots w_m})^\f\in \mathcal Q$, i.e., $ \si^n((\overline{w_1\ldots w_m})^\f)\lle (\overline{w_1\ldots w_m})^\f$ for all $ n\ge 0$. Properties (f1), (f2), (f3) imply the following.
\begin{lemma}
\label{lem:43}
Let $\overline{a_1\ldots a_m}\in\mathcal F$. Suppose 
\[\overline{a_1\ldots a_m}=(\overline{a_1\ldots a_{m_1}})(\overline{a_{m_1+1}\ldots a_m})\] is the standard factorisation of $\overline{a_1\ldots a_m}$.
 
\begin{enumerate}

\item[{\rm(i)}]  The words $a_1\ldots a_{m-1}1$ and $0a_2\ldots a_m$ are palindromes, i.e.,
\[
a_2\ldots a_{m-1}=a_{m-1}\ldots a_2.
\]
\item[{\rm(ii)}]  The Lyndon word associated to $a_1\ldots a_m$ is given by
\[
a_{m-m_1+1}\ldots a_m a_1\ldots a_{m-m_1}=a_m a_{m-1}\ldots a_1.
\]
\item[{\rm(iii)}]  $(a_1\ldots a_{m_1})^\f\in\mathcal Q$.
\end{enumerate}
\end{lemma}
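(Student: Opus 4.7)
The plan is to derive each of the three claims directly from the corresponding properties (f1), (f2), (f3) of the Farey word $\overline{a_1\ldots a_m}$, using the complementation map $w\mapsto \overline w$. The crucial features of this map are that it is an order-reversing involution on finite words and infinite sequences, it commutes with concatenation, reversal, cyclic permutations, and the shift $\si$, and it takes palindromes to palindromes and aperiodic words to aperiodic words.

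For (i) I apply (f1) to $\overline{a_1\ldots a_m}$: the words $\overline{a_1\ldots a_{m-1}}0$ and $1\overline{a_2\ldots a_m}$ are palindromes. Complementing preserves the palindrome property, so $a_1\ldots a_{m-1}1$ and $0a_2\ldots a_m$ are palindromes as well, from which the identity $a_2\ldots a_{m-1}=a_{m-1}\ldots a_2$ is immediate. For (ii) the key observation is that $w\mapsto \overline w$ reverses lexicographic order and commutes with cyclic permutations, so it sends the lex largest cyclic permutation of a word to the lex smallest cyclic permutation of its complement. Applying (f2) to $\overline{a_1\ldots a_m}$ identifies its largest cyclic permutation as $\overline{a_{m-m_1+1}\ldots a_m a_1\ldots a_{m-m_1}}=\overline{a_m a_{m-1}\ldots a_1}$; taking complements gives exactly the formula claimed for the smallest cyclic permutation of $a_1\ldots a_m$. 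This smallest cyclic permutation is automatically the Lyndon word for $a_1\ldots a_m$, since Farey words are aperiodic and hence so is $a_1\ldots a_m$.

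For (iii) the first factor $\overline{a_1\ldots a_{m_1}}$ is itself a Farey word. When $m_1\ge 2$ it is non-degenerate, so (f3) says it is Lyndon, i.e.\ $\si^n(\overline{a_1\ldots a_{m_1}}^\f)\lge \overline{a_1\ldots a_{m_1}}^\f$ for all $n\ge 0$; taking complements reverses all these inequalities to $\lle$, which is precisely the defining condition of $\mathcal Q$. One still has to check that $(a_1\ldots a_{m_1})^\f$ does not end in $0^\f$: since a non-degenerate Farey word begins with $0$, we have $a_1=1$, and the conclusion is automatic. The degenerate case $m_1=1$ is handled by the fact, extracted from the recursive construction of the $F_n$, that in any standard factorization the left factor is strictly lex smaller than the right factor, which rules out $\overline{a_1}=1$ and forces $a_1=1$, giving $(a_1)^\f=1^\f\in\mathcal Q$.

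The main obstacle I anticipate lies entirely in (iii): the bookkeeping around possibly degenerate factors in the standard factorization together with the exclusion of the $0^\f$-tail from the definition of $\mathcal Q$. The necessary input from the construction of the $F_n$, namely that the left factor of a standard factorization is always strictly lex smaller than the right one, is what rules out the only configuration in which $(a_1\ldots a_{m_1})^\f$ could fail to lie in $\mathcal Q$.
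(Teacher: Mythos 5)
Your proposal is correct and follows essentially the same route as the paper: items (i) and (ii) are read off from (f1) and (f2) via the order-reversing, palindrome- and cyclic-permutation-preserving complementation map, and (iii) comes from the fact that the first factor of the standard factorisation is itself a (Lyndon) Farey word, whose Lyndon inequalities complement to the defining inequalities of $\mathcal Q$. The extra care you take with the $0^\infty$-tail and the degenerate case $m_1=1$ (where simply noting $\overline{a_1}=0$, hence $a_1=1$, already suffices) only makes explicit what the paper leaves implicit.
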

\begin{proof}
  (i) and (ii) immediately follow from (f1) and (f2) respectively. For (iii), we know that $\overline{a_1\ldots a_{m_1}}$ is a Lyndon word and therefore $(\overline{a_1\ldots a_{m_1}})^\f \lle\sigma^n((\overline{a_1\ldots a_{m_1}})^\f)$ for all $n\in\mathbb{N}$. This gives $(a_1\ldots a_{m_1})^\f \lge\sigma^n((a_1\ldots a_{m_1})^\f)$ for all $n\in\mathbb{N}$.   
\end{proof}

For Farey words we obtain a strengthened version of Lemma \ref{lem:311}, which will be useful in the proofs of Theorems \ref{main:3} and \ref{main:4}.
\begin{proposition}
\label{prop:44}
Let $\overline{a_1\ldots a_m}\in \mathcal F$, and let $s_1\ldots s_m$ be the Lyndon word for $a_1\ldots a_m$. Then any sequence in the set
\[
\mathcal{Z}_m:=\set{(x_i)\in\set{0,1}^\N: s_1\ldots s_m0^\f \lle \si^n((x_{i})) \lle (a_1\ldots a_m)^\f\quad\textrm{for all}\quad n\ge 0}
\]
ends with $(a_1\ldots a_m)^\f$ and $\# \mathcal Z_m < \infty$.
\end{proposition}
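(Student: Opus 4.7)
The plan is to combine Lemma~\ref{lem:311} with the specific structure of Farey words given by Lemma~\ref{lem:43}. Let $\overline{a_1\ldots a_m}\in\mathcal F$ have standard factorisation $(\overline{a_1\ldots a_{m_1}})(\overline{a_{m_1+1}\ldots a_m})$. By Lemma~\ref{lem:43}(ii) the Lyndon word for $a_1\ldots a_m$ is
\[
s_1\ldots s_m \;=\; a_{m-m_1+1}\ldots a_m\, a_1\ldots a_{m-m_1},
\]
so in the notation of Lemma~\ref{lem:311} the index $j$ is $m-m_1$. Thus Lemma~\ref{lem:311}(i) applies to sequences in $\mathcal Z_m$ beginning with $a_{m-m_1+1}\ldots a_m$ (forcing the sequence to equal $(s_1\ldots s_m)^\f$), and Lemma~\ref{lem:311}(ii) applies to sequences beginning with $a_1\ldots a_{m-m_1}$ (forcing the sequence to equal $(a_1\ldots a_m)^\f$).

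The main step is to show that for any $(x_i)\in\mathcal Z_m$, some shift $\sigma^n((x_i))$ begins with $a_1\ldots a_{m-m_1}$ or with $a_{m-m_1+1}\ldots a_m$. Since the defining inequalities for $\mathcal Z_m$ are shift-invariant, $\sigma^n((x_i))\in\mathcal Z_m$, and Lemma~\ref{lem:311} then forces $\sigma^n((x_i))\in\{(a_1\ldots a_m)^\f,(s_1\ldots s_m)^\f\}$. Since $(s_1\ldots s_m)^\f=\sigma^{m-m_1}((a_1\ldots a_m)^\f)$, in either case $(x_i)$ eventually coincides with $(a_1\ldots a_m)^\f$, which gives the first assertion of the proposition.

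To produce such an $n$, I would track, for each shift $\sigma^{n}((x_i))$, the length $k$ of its longest common prefix with $(a_1\ldots a_m)^\f$ and the length $\ell$ of its longest common prefix with $(s_1\ldots s_m)^\f$. The lex constraints $s_1\ldots s_m 0^\f\lle\sigma^{n}((x_i))\lle (a_1\ldots a_m)^\f$ guarantee that at each step either $k\ge m-m_1$ (giving prefix $a_1\ldots a_{m-m_1}$ and Lemma~\ref{lem:311}(ii) applies), or $\ell\ge m_1$ (giving prefix $a_{m-m_1+1}\ldots a_m$ and Lemma~\ref{lem:311}(i) applies), or the sequence deviates at a controlled position that lets one advance $n$. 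Termination of this process is where the palindromic Farey structure from Lemma~\ref{lem:43}(i)—the fact that $a_1\ldots a_{m-1}1$ and $0a_2\ldots a_m$ are palindromes—enters crucially, and the argument can be organised as an induction on the Farey depth of $\overline{a_1\ldots a_m}$, with the base case $\overline{a_1\ldots a_m}=01$ (so $a_1a_2=10$, $s_1s_2=01$) immediate.

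For the finiteness statement $\#\mathcal Z_m<\f$, once we know every $(x_i)\in\mathcal Z_m$ is of the form $w\tau$ with $\tau\in\{(a_1\ldots a_m)^\f,(s_1\ldots s_m)^\f\}$ and $w$ a finite prefix, the lex constraints on $\sigma^k((x_i))$ for $k<|w|$ restrict the possible $w$ to a finite set (in fact the argument shows $|w|<m$, so $\#\mathcal Z_m$ is at most $2m$).

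The main obstacle is the termination argument for producing~$n$: for a generic (non-Farey) periodic word $a_1\ldots a_m$ the analogous set $\mathcal Z_m$ may genuinely contain sequences that evade both prefix conditions forever, so the balanced/palindromic Farey structure from Lemma~\ref{lem:43} must be used in an essential way.
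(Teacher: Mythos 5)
Your plan has the right skeleton — induction on the Farey (standard factorisation) structure with base case $\overline{a_1a_2}=01$, the identification $j=m-m_1$ so that Lemma~\ref{lem:311} pins down any sequence in $\mathcal Z_m$ whose shift begins with $a_1\ldots a_{m-m_1}$ or with $a_{m-m_1+1}\ldots a_m$, and the use of the palindrome properties from Lemma~\ref{lem:43}. But the central step, namely that \emph{every} $(x_i)\in\mathcal Z_m$ has some shift with one of these two prefixes, is exactly the part you leave unproved: the ``track the longest common prefix and advance $n$ at a controlled deviation'' process is not an argument, and you yourself flag its termination as the main obstacle. As stated, this is a genuine gap, and the ancillary claims hanging off it ($|w|<m$, $\#\mathcal Z_m\le 2m$) are unjustified; the paper's own proof only obtains a recursive bound on the prefix length (of the form $j<k_1+(k+2)m_1$, with $k_1$ coming from the inductive level), and explicitly leaves a good bound on $\#\mathcal Z_m$ as an open question, so $2m$ is almost certainly too optimistic.

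The way the paper closes this gap is worth internalising, because it is where the Farey structure is genuinely used. Writing $\overline{a_1\ldots a_N}=(\overline{a_1\ldots a_{m_1}})(\overline{a_{m_1+1}\ldots a_N})$, one applies the induction hypothesis not to an abstract ``Farey depth'' but to the concrete smaller set $\mathcal Z_{m_1}$ built from the left factor $a_1\ldots a_{m_1}$ (whose Lyndon word is $s_1\ldots s_{m_1}^+$ by Lemma~\ref{lem:43}). Since by induction every element of $\mathcal Z_{m_1}$ ends in $(a_1\ldots a_{m_1})^\infty$, while $(a_1\ldots a_N)^\infty\prec(a_1\ldots a_{m_1})^\infty$ (Lemma~\ref{lem:310}), one gets $\mathcal Z_{m_1}\cap\mathcal Z_N=\emptyset$. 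Hence each $(x_i)\in\mathcal Z_N$ must, at some minimal position $j$, violate the defining inequalities of $\mathcal Z_{m_1}$; comparing the lower bounds $s_1\ldots s_{m_1}^+0^\infty$ (for $\mathcal Z_{m_1}$) and $s_1\ldots s_N0^\infty$ (for $\mathcal Z_N$), the only way to violate the former while satisfying the latter is to have $x_{j+1}\ldots x_{j+m_1}=s_1\ldots s_{m_1}=a_{N-m_1+1}\ldots a_N$ — which is precisely the prefix that triggers Lemma~\ref{lem:311}(i) and forces the tail $(s_1\ldots s_N)^\infty$. If you replace your prefix-tracking heuristic by this disjointness argument, your proof becomes the paper's proof; without it, the key step is missing.
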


\begin{proof}
Let $\overline{a_1\ldots a_m}\in\mathcal F$.  Then $m\ge 2$,  $a_1=1$ and $a_m=0$.  We will prove this proposition by induction on $m$. If $m=2$, then $a_1a_2=10$. So $s_1s_2=01$ is the corresponding Lyndon word. The only two sequences $(x_i)$ satisfying
\[
010^\f\lle \si^n((x_i))\lle (10)^\f\quad\textrm{for all}\quad n\ge 0
\]
are $(01)^\f$ and $(10)^\f$. So the proposition holds for $m=2$.

Suppose the proposition is true for all $2\le m<N$ and all $\overline{a_1\ldots a_m}\in\mathcal F$. Let $(a_1\ldots a_N)^\infty\in \mathcal Q$ be such that $\overline{a_1\ldots a_N}\in \mathcal F$. The standard factorisation gives us a unique $m_1$ such that $a_1\ldots a_N= a_1\ldots a_{m_1} a_{m_1+1}\ldots a_N$ with $\overline{a_1\ldots a_{m_1}}$ and $\overline{a_{m_1+1}\ldots a_N}$ both Farey words. We consider two cases separately: (I) $m_1=1$ and (II) $2\leq m_1 <N$.\newline
Case (I): If $m_1=1$, Lemma \ref{lem:43} (ii) implies that the Lyndon word for $a_1\ldots a_N$ is
\[
a_Na_1\ldots a_{N-1}=0a_1\ldots a_{N-1}.
\]
Then by Lemma \ref{lem:311} it follows that any sequence $(x_i)$ beginning with digit $0$ and satisfying
\[
0a_1\ldots a_{N-1}0^\f\lle \si^n((x_i))\lle (a_1\ldots a_N)^\f\quad\textrm{for all}\quad n\ge 0
\]
equals $(0a_1\ldots a_{N-1})^\infty$. Observe that  for any sequence $(x_i)\in \mathcal Z_N$ there exists a $1 \le k\le N$, such that $x_k=0$, so $\sigma^{k-1}((x_i))=(0a_1\ldots a_{N-1})^\infty=0(a_1\ldots a_N)^\infty$. We find $\# \mathcal Z_N\le N$. This proves the proposition for $m_1=1$.\newline
Case (II): If $2\le m_1<N$, then Lemma \ref{lem:43} (i) and (ii) imply that the Lyndon word $s_1\ldots s_N$ for $a_1\ldots a_N$ is given by
\begin{equation}\label{eq:43}
s_1\ldots s_N=a_{N-m_1+1}\ldots a_Na_1\ldots a_{N-m_1}=0a_2\ldots a_{m_1}a_{m_1+1}\ldots a_{N-1}1.
\end{equation}
Moreover, by Lemma \ref{lem:43} (iii) we know that  $(a_1\ldots a_{m_1})^\f\in\mathcal Q$ and $\overline{a_1\ldots a_{m_1}} \in \mathcal F$. By the same lemma, the Lyndon word for $a_1\ldots a_{m_1}$ is given by
\begin{equation*}
 0a_2\ldots a_{m_1-1}1=s_1\ldots s_{m_1}^+.
\end{equation*}
Thus
\[
\mathcal{Z}_{m_1} =\set{(x_i)\in\set{0, 1}^\N: s_1\ldots s_{m_1}^+0^\f\lle \si^n((x_i))\lle (a_1\ldots a_{m_1})^\f\textrm{ for all }n\ge 0}.
\]
We claim that $\mathcal{Z}_{m_1}\cap\mathcal{Z}_N=\emptyset$. By induction any sequence in $\mathcal{Z}_{m_1}$ ends with $(a_1\ldots a_{m_1})^\f$. Now write $N=k m_1+j$ with $k\ge 1$ and $0<j\le m_1$. Then by Lemma \ref{lem:310} it follows that
 \begin{equation} \begin{split}\label{q:aNm1}
 a_1\ldots a_N&=(a_1\ldots a_{m_1})\cdots (a_{(k-1)m_1+1}\ldots  a_{k m_1})a_{N-j+1}\ldots a_N\\
 &\lle (a_1\ldots a_{m_1})^k a_{N-j+1}\ldots a_N \prec (a_1\ldots a_{m_1})^k a_1\ldots a_j.
 \end{split}\end{equation}
This implies that
 \begin{equation}\label{eq:*1}
 (a_1\ldots a_N)^\f\prec (a_1\ldots a_{m_1})^\f.
 \end{equation}
 Thus $(a_1\ldots a_{m_1})^\f\notin\mathcal {Z}_N$ and $\mathcal{Z}_{m_1}\cap\mathcal{Z}_N=\emptyset$. 

\vskip .2cm
Therefore, by (\ref{eq:43}) and (\ref{eq:*1}), it follows that for any sequence $(x_i)\in\mathcal Z_N=\mathcal Z_N\setminus\mathcal Z_{m_1}$ there exists a minimal $j\ge 0$ such that
 \[  x_{j+1}\ldots x_{j+m_1}=s_1\ldots s_{m_1}=a_{N-m_1+1}\ldots a_N. \]
By Lemma \ref{lem:311} we conclude for the tails that $x_{j+1}x_{j+2}\ldots =(s_1\ldots s_N)^\f$, which give the first part of the proposition. We now show that $\# \mathcal Z_N < \infty$. The minimality of $j$ implies that $x_1\ldots x_j\in\B_j(\mathcal Z_{m_1})$. Since $\# \mathcal Z_{m_1} < \infty$ and any sequence in $\mathcal Z_{m_1}$ ends with $(a_1 \ldots a_{m_1})^{\infty}$, there is a $k_1 \ge 0$, such that for each $(x_i) \in \mathcal Z_{m_1}$ the sequence $\sigma^{k_1} ((x_i))$ equals some shift of $(a_1 \ldots a_{m_1})^\infty$. Then for $j \ge k_1 + (k+2)m_1$ any $x_1 \ldots x_j \in \B_j(\mathcal Z_{m_1})$ contains the word $(a_1 \ldots a_{m_1})^{k+1}$. By \eqref{q:aNm1} this would contradict the fact that $(x_i) \in \mathcal Z_N$. Hence, $j < k_1 + (k+2)m_1$ and since for each $j$, the set $\B_j(\mathcal Z_{m_1})$ is finite, we get $\# \mathcal Z_N < \infty$.
\end{proof}

\begin{remark}{\rm Proposition~\ref{prop:44} shows that the size of $\mathcal Z_m$ is finite for each $m\ge 1$. Then it would be interesting to give a good upper bound of $\# \mathcal Z_m$. }
\end{remark}
 
\subsection{Farey intervals}
We now use the Farey words to identify the basic intervals $(\beta_L,\beta_R]$ that are not contained in any other basic interval.

\begin{definition}\label{def:45}
Let $\overline{a_1\ldots a_m}\in \mathcal F$ and let $\gamma_L$ and $\gamma_R$ be given by the quasi-greedy expansions $\al(\ga_L)=(a_1\ldots a_m)^\f$ and $\al(\ga_R)=a_1\ldots a_{m}^+(a_ma_{m-1}\ldots a_1)^\f$ respectively. Then the interval  $J_{a_1\ldots a_m}=(\ga_L, \ga_R]$ is called the \emph{Farey interval} generated by $a_1\ldots a_m$.
\end{definition}
   
Proposition \ref{prop:312} states that for any $\beta\in J_{a_1\ldots a_m}$ the set $E_\beta^+$ contains an isolated point. So the set of $\beta\in(1,2)$ for which $E_\beta^+$ has no isolated points is a subset of
\[ (1,2)\setminus\bigcup_{\overline{a_1\ldots a_m}\in\mathcal F} J_{a_1\ldots a_m}.\]
To prove Theorem~\ref{main:3} it is therefore enough to prove that this set has Hausdorff dimension zero. We do this by relating each Farey interval $J_{a_1 \ldots a_m}$ to another interval $I_{\overline{a_1\ldots a_m}}$ associated to the doubling map and using known results for the union of $\bigcup I_{\overline{a_1\ldots a_m}}$.

\vskip .2cm
Recall that the doubling map is given by $T_2(x)=2x \pmod 1$ and that $\pi_2: \{0,1\}^\mathbb N \to [0,1]$ is the projection map defined in $(\ref{eq:21})$. Set
\[ E_D:=\Big\{x\in \Big[0,\frac{1}{2}\Big) : T_2^n(x)\in \Big[x,x+\frac{1}{2}\Big] \text{ for all } n\ge 0\Big\}.\]
For each Farey word $w=w_1\ldots w_m\in \mathcal F$ we denote by  $I_w:=(q_L, q_R)$ the open interval associated to $w$, where
\[q_L=\pi_2((w_mw_{m-1}\ldots w_1)^\infty)-\frac{1}{2}\quad\textrm{and}\quad  q_R=\pi_2((w_1\ldots w_m)^\infty).\]
The interval $I_w=(q_L, q_R)$ is well-defined, since by (f1) it follows that 
\begin{align*}
q_L&=\pi_2(0w_{m-1}w_{m-2}\ldots w_1 (w_mw_{m-1}\ldots w_1)^\f)\\
&=\pi_2(w_1w_2\ldots w_{m-1}0(w_mw_{m-1}\ldots w_1)^\f) <\pi_2((w_1\ldots w_m)^\f)=q_R.
\end{align*}
In \cite{CIT13} we find the following result.
\begin{proposition}\cite[Proposition 2.14]{CIT13}\label{prop:48}
\begin{enumerate}
\item[{\rm(i)}]  Each $I_w$ is a connected component of $(0, \frac{1}{2})\setminus E_D$. Moreover,
\[
\Big(0,\frac{1}{2}\Big)\setminus E_D=\bigcup_{w\in\mathcal F}I_w.
\]

\item[{\rm(ii)}] 
 $\dim_H E_D=0$.
 \end{enumerate}
\end{proposition}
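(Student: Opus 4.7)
The plan is to attack both parts via symbolic dynamics on the full $2$-shift, translating $E_D$ into a combinatorial constraint on binary expansions and then exploiting the Farey/Sturmian structure of the sequences that satisfy it. As setup, for $x \in (0, 1/2)$ with $b(x, 2) = 0 x_2 x_3 \ldots$ one has $x + 1/2 = \pi_2(1 x_2 x_3 \ldots)$, so $x \in E_D$ if and only if for every $n \ge 0$ both $\si^n((x_i)) \lge (x_i)$ (from $T_2^n(x) \ge x$) and $\si^n((x_i)) \lle 1 x_2 x_3 \ldots$ (from $T_2^n(x) \le x + 1/2$) hold. The second inequality is automatic when $x_{n+1} = 0$ and reduces to $(x_{n+2} x_{n+3} \ldots) \lle (x_2 x_3 \ldots)$ when $x_{n+1} = 1$; call this pair the \emph{$E_D$-condition}.

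For part (i), fix a Farey word $w = w_1 \ldots w_m \in \mathcal F$. The periodic sequence $\mathbf r = (w_1 \ldots w_m)^\infty$ is the greedy binary expansion of $q_R$; property (f3) gives that $w$ is Lyndon, which supplies the first half of the $E_D$-condition for $\mathbf r$, and the palindromic relation (f1) yields the second half by a direct comparison of periodic shifts. A parallel argument using that $w_1 \ldots w_{m-1} 0$ is a palindrome places $q_L$ in $E_D$ as well. For $x$ strictly inside $I_w = (q_L, q_R)$ I would show that $b(x, 2)$ must agree with $w_1 \ldots w_m$ on a long initial prefix, and then the standard factorisation of $w$ together with property (f2) produces a shift witnessing a failure of the $E_D$-condition, so $x \notin E_D$. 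Conversely, if $x \in (0, 1/2) \setminus E_D$, the first index $n$ at which the $E_D$-condition fails, combined with a nesting argument in the spirit of Proposition~\ref{prop:313}, pinpoints a unique $w \in \mathcal F$ with $x \in I_w$.

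For part (ii) let $\E_D \subset \set{0, 1}^\N$ denote the set of sequences satisfying the $E_D$-condition, so that $E_D = \pi_2(\E_D) \cap [0, 1/2)$. The heart of the matter is to show $h(\E_D) = 0$. Sequences in $\E_D$ are kneading-type codings for the doubling map restricted to a half-circle based at $x$, and, outside the endpoints of the Farey intervals, correspond in a monotone fashion to rotation numbers in $[0, 1/2]$; such sequences are Sturmian-like, so their factor complexity $\#\B_n(\E_D)$ grows only polynomially in $n$. Hence $h(\E_D) = 0$, and since $\pi_2$ is Lipschitz this gives $\dim_H E_D \le h(\E_D) / \log 2 = 0$. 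The main obstacle is making the polynomial complexity bound rigorous, i.e.\ identifying $\E_D$ precisely with Sturmian-type sequences. An alternative is a direct geometric covering argument: at Stern--Brocot depth $n$ the complement of $\bigcup_{w \in F_0 \cup \cdots \cup F_n} I_w$ in $(0, 1/2)$ is a union of $O(2^n)$ intervals whose maximal length is well-controlled, and a careful $s$-content estimate then yields $\mathcal H^s(E_D) = 0$ for every $s > 0$.
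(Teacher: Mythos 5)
First, a point of comparison: the paper does not prove Proposition~\ref{prop:48} at all --- it is imported verbatim from \cite[Proposition 2.14]{CIT13} --- so there is no internal proof to measure you against, only the known argument in the literature. Judged on its own terms, your proposal identifies the right objects and the standard strategy: the symbolic reformulation of $E_D$ (orbit trapped in $[x,x+\frac12]$ iff every shift of $b(x,2)$ lies lexicographically between $(x_i)$ and $1x_2x_3\ldots$) is correct, and the verification that $q_L,q_R\in E_D$ via the Lyndon property and the palindromic identities (f1)--(f3) does go through. But the proposal is an outline with two genuine gaps, one of which you flag yourself.

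The first gap is the inclusion $(0,\frac12)\setminus E_D\subseteq\bigcup_{w\in\mathcal F}I_w$, i.e.\ that the $I_w$ exhaust the complement. Saying that the first failure index of the $E_D$-condition ``combined with a nesting argument in the spirit of Proposition~\ref{prop:313} pinpoints a unique $w$'' is not an argument: Proposition~\ref{prop:313} only asserts that basic intervals are nested and gives no mechanism for producing a Farey word from a point outside $E_D$. This surjectivity is the hardest part of (i); in the literature it is handled either through the rotation number of an associated degree-one circle map (whose plateaux are precisely the $I_w$) or by a genuine induction on the Stern--Brocot tree. The second gap is the complexity input for (ii). The key lemma is that every $x\in E_D$ has a balanced (Sturmian or periodic balanced) binary expansion, whence $\#\B_n(\mathcal E_D)=O(n^3)$ and the level-$n$ cylinder cover gives $\mathcal H^s(E_D)\le Cn^3\,2^{-ns}\to0$ for all $s>0$; you correctly name this identification as the main obstacle and do not supply it. Your fallback covering argument also fails as stated: at Stern--Brocot depth $n$ there are about $2^n$ residual gaps whose \emph{maximal} length is only of order $2^{-cn}$, so the naive bound $2^n\cdot(2^{-cn})^s$ proves nothing for small $s$. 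What rescues it is that the gap associated with a Farey word of length $m$ has length $O(2^{-m})$ while the number of Farey words of length $m$ is $\phi(m)$, giving an $s$-sum of at most $\sum_{m\ge n}\phi(m)2^{-ms}\to0$; these quantitative facts must be proved and cannot be subsumed under ``well-controlled''.
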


Recall that by Lemma~\ref{lem:21} the function $\alpha: \beta \mapsto \alpha(\beta)$ is a strictly increasing bijection from $(1,2]$ to $\mathcal Q$. Moreover, $\pi_2: \{0,1\}^\mathbb N \to (0,1]$ is a strictly increasing bijection if we remove from $
\{0,1\}^\mathbb N$ all sequences ending with $0^{\infty}$. Since such sequences do not occur as quasi-greedy expansions of $1$ and since the first digit $\al_1(\beta)$ equals 1 for any $\beta\in(1,2)$, the map
\[\phi: (1,2) \to  \Big(\frac12, 1 \Big), \,  \beta\mapsto \pi_2(\al(\beta))=\sum_{i=1}^\f\frac{\al_i(\beta)}{2^i}\]
is strictly increasing as well. The image $\phi ((1,2))$ is a proper subset of $(\frac{1}{2}, 1)$. 

\begin{lemma}\label{lem:49}
\[ \phi \Big((1,2)\setminus\bigcup_{\overline{a_1\ldots a_m}\in\mathcal F} J_{a_1\ldots a_m} \Big) \, \subseteq \, \Big(\frac12, 1\Big)\setminus\bigcup_{\overline{a_1\ldots a_m}\in\mathcal F}(1-I_{\overline{a_1\ldots a_m}}) = 1-E_D.\]
\end{lemma}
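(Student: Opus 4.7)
The plan is to match endpoints: for each Farey word $w=\overline{a_1\ldots a_m}=w_1\ldots w_m\in\mathcal F$, with $I_w=(q_L,q_R)$ and $J_{a_1\ldots a_m}=(\gamma_L,\gamma_R]$, I would first prove the two identities
\[
\phi(\gamma_L)=1-q_R,\qquad \phi(\gamma_R)=1-q_L.
\]
The left identity is essentially bookkeeping: since $\alpha(\gamma_L)=(a_1\ldots a_m)^\infty$ and $\pi_2((x_i))+\pi_2((\bar x_i))=1$ for any sequence not terminating in $0^\infty$, one immediately gets $\phi(\gamma_L)=\pi_2((a_1\ldots a_m)^\infty)=1-\pi_2((w_1\ldots w_m)^\infty)=1-q_R$.

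For the right identity, I would unfold the definition to
\[
\phi(\gamma_R)=\pi_2(a_1\ldots a_{m-1}1\,(a_m\ldots a_1)^\infty)=\pi_2(a_1\ldots a_{m-1})+\frac{1}{2^m}+\frac{\pi_2((a_m\ldots a_1)^\infty)}{2^m},
\]
and, after summing the geometric series for $\pi_2((a_m\ldots a_1)^\infty)=\pi_2(a_m\ldots a_1)/(1-2^{-m})$, reduce the desired identity $\phi(\gamma_R)=\pi_2((a_m\ldots a_1)^\infty)+\tfrac12$ to the finite equation
\[
\pi_2(a_1\ldots a_{m-1})-\pi_2(a_m\ldots a_1)=\frac12-\frac{1}{2^m}.
\]
This I would verify by a direct term-by-term comparison using $a_1=1$ and $a_m=0$ (which follow from $w_1=0$, $w_m=1$ for any non-degenerate Farey word) together with the palindrome relation $a_i=a_{m-i+1}$ for $2\le i\le m-1$ supplied by Lemma~\ref{lem:43}(i).

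With the endpoint identifications in place, the inclusion in the lemma is a short monotonicity argument. If $\beta\in(1,2)\setminus\bigcup_{\overline{a_1\ldots a_m}\in\mathcal F}J_{a_1\ldots a_m}$, then for each Farey word either $\beta\le\gamma_L$ or $\beta>\gamma_R$, so by the strict monotonicity of $\phi$ either $\phi(\beta)\le 1-q_R$ or $\phi(\beta)>1-q_L$. Hence $\phi(\beta)\notin (1-q_R,1-q_L)=1-I_{\overline{a_1\ldots a_m}}$ for every Farey word, and since $\phi((1,2))\subseteq(\tfrac12,1)$ the claimed containment in $(\tfrac12,1)\setminus\bigcup(1-I_{\overline{a_1\ldots a_m}})$ follows. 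The final equality with $1-E_D$ is just Proposition~\ref{prop:48}(i) read on the reflected side: applying $x\mapsto 1-x$ to $(0,\tfrac12)\setminus E_D=\bigcup_{w\in\mathcal F}I_w$ gives $(\tfrac12,1)\setminus\bigcup(1-I_w)=1-E_D$ (up to the harmless endpoint $0$, which is irrelevant for the subsequent dimension argument).

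The main obstacle is the right-endpoint identity $\phi(\gamma_R)=1-q_L$: this is the single place where the palindrome structure of Farey words (not merely being Lyndon) is essential, and without it the basic interval $J_{a_1\ldots a_m}$ would not align with the reflected Farey component $1-I_{\overline{a_1\ldots a_m}}$. Everything else is routine monotonicity and translation of symbolic data.
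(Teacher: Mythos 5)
Your proposal is correct and follows essentially the same route as the paper: identify the endpoints via $\phi(\gamma_L)=1-q_R$ and $\phi(\gamma_R)=1-q_L$ (the latter resting on the palindrome property of Farey words from Lemma~\ref{lem:43}(i)--(ii)), then conclude by strict monotonicity of $\phi$ and Proposition~\ref{prop:48}(i). The only cosmetic difference is that you verify the right-endpoint identity by summing geometric series and comparing terms, whereas the paper rewrites $a_1\ldots a_m^+=1a_{m-1}\ldots a_1$ directly and reads off $\phi(\gamma_R)=\tfrac12+\pi_2((a_m\ldots a_1)^\infty)$.
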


\begin{proof}
Let $a_1\ldots a_m$ be a word such that $\overline{a_1\ldots a_m}\in \mathcal F$. Note that 
\[
q_R=\pi_2((\overline{a_1\ldots a_m})^\infty)=\sum_{n\geq 1} \frac{1}{2^n} -\pi_2((a_1\ldots a_m)^\infty)=1-\phi(\ga_L).
\]
Moreover, by Lemma \ref{lem:43}(i) and (ii) it follows that
 \[
 \al(\ga_R)=a_1\ldots a_{m}^+(a_ma_{m-1}\ldots a_1)^\f=1a_{m-1}a_{m-2}\ldots a_1(a_ma_{m-1}\ldots a_1)^\f.
 \]
 Then
 \begin{align*}
 \phi(\ga_R)&=\pi_2(1a_{m-1}a_{m-2}\ldots a_1(a_ma_{m-1}\ldots a_1)^\f)\\
 &=\frac{1}{2}+\pi_2((a_ma_{m-1}\ldots a_1)^\f) =\frac{1}{2}+\big(1-\pi_2((\overline{a_ma_{m-1}\ldots a_1})^\f)\big)\\
 &=1-\big(\pi_2((\overline{a_ma_{m-1}\ldots a_1})^\f)-\frac{1}{2}\big)\;=1-q_L.
 \end{align*}
Since $\phi$ is strictly increasing and bijective from $(1,2)$ to $\phi((1,2))$, this implies that 
\[  \phi^{-1} \big((1-q_R,1-q_L) \big) = (\gamma_L, \gamma_R).\]
By Proposition~\ref{prop:48}(i) this gives the result.
\end{proof}

\begin{remark}
The proof of the previous lemma shows that each Farey interval is mapped precisely into one interval $I_w$ by the map $\phi$. By Proposition~\ref{prop:48}(i) we obtain that the Farey intervals are disjoint.
\end{remark}

Finally, to determine the Hausdorff dimension of $(1,2)\setminus\bigcup_{\overline{a_1\ldots a_m}\in\mathcal F} J_{a_1\ldots a_m}$, we prove that the inverse $\phi^{-1} : \pi_2 \circ \alpha ((1,2)) \to (1,2)$ is H\"older continuous and combine this with the following well known result: If $f: (X, \rho_1)\ra (Y, \rho_2)$ is a $c$-H\"{o}lder continuous map between two metric spaces $(X, \rho_1)$ and $(Y, \rho_2)$, then $\dim_H f(X)\le \frac{1}{c} \dim_H X$.
\begin{lemma}\label{lem:412}
For any integer $N\ge 2$ the function $\phi^{-1}$ is $c$-H\"older continuous with $c=\frac{\log(1+1/N)}{\log 4}$ on the  set  $\phi(\big[1+\frac{1}{N},2\big))$.
\end{lemma}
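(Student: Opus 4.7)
The plan is to bound both $|\beta_2-\beta_1|$ and $|\phi(\beta_2)-\phi(\beta_1)|$ in terms of a single integer $m$, the first position at which the quasi-greedy expansions $\al(\beta_1)$ and $\al(\beta_2)$ disagree, and then combine the estimates. Fix $\beta_1<\beta_2$ in $[1+\frac{1}{N},2)$ and set $r:=1+\frac{1}{N}$, $c:=\log r/\log 4$. Since $\al_1(\beta)=1$ for every $\beta\in(1,2]$ and $\al:(1,2]\to\mathcal Q$ is a strictly increasing bijection by Lemma~\ref{lem:21}, there is a minimal integer $m\ge 2$ with $\al_i(\beta_1)=\al_i(\beta_2)=:w_i$ for $i<m$, $\al_m(\beta_1)=0$, and $\al_m(\beta_2)=1$. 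The aim is to establish $|\beta_2-\beta_1|\le r^{-(m-2)}$ and $|\phi(\beta_2)-\phi(\beta_1)|\ge 4^{-m}$, and then combine via the identity $(4^{-m})^c=r^{-m}$.

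For the first bound, consider the cylinder $V_w:=\{\beta\in(1,2]:\al_i(\beta)=w_i \text{ for } 1\le i\le m-1\}$, which is an interval containing $\beta_1$ and $\beta_2$ because lexicographic cylinders intersected with $\mathcal Q$ are order-convex and $\al$ is order-preserving. On $V_w$ the first $m-1$ quasi-greedy digits are frozen, so $y_{m-1}(\beta):=\widetilde T_\beta^{m-1}(1)=\beta^{m-1}-\sum_{i=1}^{m-1}w_i\beta^{m-1-i}$ is a polynomial in $\beta$. Differentiating the recursion $y_k(\beta)=\beta y_{k-1}(\beta)-\al_k(\beta)$ (valid with $\al_k$ constant) gives $y_k'=y_{k-1}+\beta y_{k-1}'$, and a routine induction starting from $y_0\equiv 1$, $y_1'\equiv 1$ yields $y_{m-1}'(\beta)\ge\beta^{m-2}$. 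Since $y_{m-1}(V_w)\subseteq(0,1]$ and $\beta\ge r$ throughout $V_w\cap[r,2)$, a change of variables gives $|V_w\cap[r,2)|\le r^{-(m-2)}$, whence $|\beta_2-\beta_1|\le r^{-(m-2)}$.

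For the second bound, expand
\[
\phi(\beta_2)-\phi(\beta_1)=\frac{1}{2^m}\bigg(1-\sum_{i\ge 1}\frac{\al_{m+i}(\beta_1)-\al_{m+i}(\beta_2)}{2^i}\bigg)\ge\frac{1}{2^m}\bigg(1-\sum_{i\ge 1}\frac{\al_{m+i}(\beta_1)}{2^i}\bigg),
\]
and then use the quasi-greedy condition $\si^m(\al(\beta_1))\lle\al(\beta_1)$ to control the tail. This condition forces the length $L$ of the initial run of $1$'s in $\si^m(\al(\beta_1))$ to be bounded by the length of the initial run of $1$'s in $\al(\beta_1)$ itself, which because $\al_m(\beta_1)=0$ is at most $m-1$. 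A geometric-series estimate using the forced $0$ at position $L+1$ of the tail then produces $\sum_{i\ge 1}\al_{m+i}(\beta_1)/2^i\le 1-2^{-(L+1)}\le 1-2^{-m}$, whence $|\phi(\beta_2)-\phi(\beta_1)|\ge 2^{-m}\cdot 2^{-m}=4^{-m}$.

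Combining the two bounds yields
\[
|\beta_2-\beta_1|\le r^{-(m-2)}=r^2(4^{-m})^c\le r^2|\phi(\beta_2)-\phi(\beta_1)|^c,
\]
so $\phi^{-1}$ is $c$-H\"older continuous on $\phi([1+\frac{1}{N},2))$ with constant at most $r^2<4$. The main obstacle is the second bound: the naive estimate gives only $|\phi(\beta_2)-\phi(\beta_1)|\ge 2^{-m}-2^{-m}=0$, and one must exploit the quasi-greedy condition to prevent the tail $\si^m(\al(\beta_1))$ from being too close to $1^\infty$ in order to extract the sharper $4^{-m}$. The factor-of-two loss in the exponent of that lower bound is precisely what turns $\log 2$ into $\log 4$ in the denominator of the H\"older exponent $c$.
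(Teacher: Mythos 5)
Your proof is correct and follows the same overall strategy as the paper's: locate the first index $m$ at which $\alpha(\beta_1)$ and $\alpha(\beta_2)$ disagree, bound $|\beta_2-\beta_1|$ from above by a constant times $(1+\frac1N)^{-m}$, bound $|\phi(\beta_2)-\phi(\beta_1)|$ from below by $4^{-m}$, and combine the two exponential estimates. Your lower bound on the $\phi$-difference is essentially identical to the paper's: both exploit the quasi-greedy condition $\sigma^m(\alpha(\beta_1))\lle\alpha(\beta_1)$ together with the zero at position $m$ to keep the tail away from $1^\infty$ (the paper phrases this as $\sigma^m(\alpha(\beta_1))\lle(1^{m-1}0)^\infty$, you phrase it via the length of the initial run of $1$'s; these are the same estimate). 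The one step you do differently is the upper bound on $|\beta_2-\beta_1|$: the paper manipulates the identity $\beta=\sum_{j\ge1}\alpha_j(\beta)\beta^{1-j}$ directly to get $\beta_2-\beta_1\le\sum_{j\ge m}\beta_2^{1-j}\le N(1+\frac1N)^{2-m}$, whereas you bound the length of the parameter cylinder $V_w$ by applying the mean value theorem to the polynomial $\beta\mapsto\widetilde T_\beta^{m-1}(1)$, whose derivative you correctly bound below by $\beta^{m-2}$ using the recursion and positivity of the iterates. Both work; your route yields the marginally cleaner H\"older constant $(1+\frac1N)^2$ in place of the paper's $N(1+\frac1N)^2$, at the small extra cost of justifying that $V_w$ is an interval on which the first $m-1$ digit functions are constant, which you do.
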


\begin{proof}
Fix $N\ge 2$ and let $\beta_1,\beta_2 \in [1+\frac{1}{N},2)$ with $\beta_1 < \beta_2$. Then $\al(\beta_1)\prec \al(\beta_2)$. Let $n$ be the first index such that $\alpha_n(\beta_1)<\alpha_n(\beta_2)$. Then 
\begin{equation}\label{eq:46}
\begin{split}
0< \beta_2-\beta_1 &= \beta_2 \sum_{j=1}^\infty \frac{\alpha_j(\beta_2)}{\beta_2^j} - \beta_1 \sum_{j=1}^\infty \frac{\alpha_j(\beta_1)}{\beta_1^j}  \\
& \le \ \sum_{j=n}^\infty \frac{\alpha_j(\beta_2)-\alpha_j(\beta_1)}{\beta_2^{j-1}} \leq  \sum_{j=n}^\infty \frac{1}{(1+\frac{1}{N})^{j-1}} \ =N \Big(1+\frac{1}{N}\Big)^{2-n}.
\end{split}
\end{equation}
 On the other hand,  we also have
\begin{equation}\label{eq:47}
\begin{split}
\pi_2\big(\alpha(\beta_2)\big)-\pi_2\big(\alpha(\beta_1)\big)&=  \sum_{j=1}^\infty \frac{\alpha_j(\beta_2)-\alpha_j(\beta_1)}{2^j} \ = \ \sum_{j=n}^\infty \frac{\alpha_j(\beta_2)-\alpha_j(\beta_1)}{2^j}\\
&\geq  \frac{1}{2^n}-\sum_{j=n+1}^\infty \frac{\alpha_j(\beta_1)}{2^j}  \geq \frac1{2^n(2^n-1)} \ > \ \frac{1}{4^n},\\
\end{split}
\end{equation}
where the second inequality follows, since
\[\al_{n+1}(\beta_1)\al_{n+2}(\beta_1)\ldots \lle \al_1(\beta_1)\al_2(\beta_1)\ldots \lle(1^{n-1}0)^\f.\]
Combining (\ref{eq:46}) and (\ref{eq:47}), we conclude that
\[
|\pi_2 \big( \alpha(\beta_2)\big)-\pi_2\big( \alpha(\beta_1)\big)|\geq  \frac{1}{4^n} = \Big(1+\frac{1}{N}\Big)^{-\frac{\log 4}{\log (1+\frac{1}{N})}n} \geq \Big( N \Big( 1+\frac1N \Big)^2 \Big)^{-\frac{\log 4}{\log (1+\frac1N )}}|\beta_2-\beta_1|^{\frac{\log 4 }{\log(1+\frac{1}{N})}}.
\qedhere \]
\end{proof}

\begin{proof}[Proof of Theorem~\ref{main:3}]
By Lemma~\ref{lem:49} the only thing left to show is that $\dim_H \phi^{-1}(1-E_D)=0$. This follows from Lemma~\ref{lem:412} and Proposition~\ref{prop:48} (ii) in the following way:
\begin{align*}
0 & \le  \dim_H \phi^{-1} (1-E_D)  =  \dim_H \left( \bigcup_{N \ge  2} \Big( \phi^{-1} (1-E_D) \cap \Big[1+\frac1N ,2 \Big) \Big) \right)\\
&= \sup_{N \ge 2} \dim_H \phi^{-1}\left(  (1-E_D) \cap \phi\left(\Big[1+\frac1N , 2\Big)\right) \right)\\
& \leq \sup_{N \ge 2} \, \frac{\log 4}{\log (1+1/N)} \dim_H\left((1-E_D) \cap \phi\left(\Big[1+\frac{1}{N}, 2\Big)\right)\right)\\
& \le \sup_{N \ge 2} \, \frac{\log 4}{\log (1+1/N)} \dim_H (1-E_D)  = \sup_{N \ge 2} \, \frac{\log 4}{\log (1+1/N)} \dim_H E_D =0. \qedhere
\end{align*}
\end{proof}

 \section{The critical points of the dimension function}\label{sec:5}
Since the map $\eta_\beta: t \mapsto \dim_H K_\beta(t)$ is a decreasing, continuous function with $\eta_\beta(0)=1$ and $\eta_\beta(\frac1\beta)=0$, there is a unique value $\tau_\beta$, such that $\dim_H K_\beta(t)>0$ if and only if $t < \tau_\beta$. Determining the value of $\tau_\beta$ would extend the results from \cite{Cla16} for holes of the form $(0,t)$. For $\beta=\gamma_L$ equal to the left endpoint of one of the Farey intervals, we show below that $\tau_\beta = 1-\frac1\beta$. This result is based on the following lemma. 
  \begin{lemma}\label{lem:53}
  Let $(a_1 \ldots a_m)^\f \in \mathcal Q$ with $\overline{a_1 \ldots a_m}\in\mathcal F$, and denote the Lyndon word for $a_1\ldots a_m$ by $s_1 \ldots s_m = a_{j+1}\ldots a_m a_1\ldots a_j$. For each $N \ge 1$, define the sequence $\t_N \in \{0,1\}^\infty$ by
  \begin{equation}\label{q:tN}
  \t_N:=(0a_2\ldots a_m(a_1\ldots a_m)^N a_1\ldots a_j)^\f.
  \end{equation}
Then for each $N \ge 1$, $\t_N\prec \t_{N+1}$. Furthermore, any sequence $\t$ that is a concatenation of blocks of the form 
\[0a_2\ldots a_m(a_1\ldots a_m)^k a_1\ldots a_j, \quad k \ge N,\]
satisfies $\t_N \lle \sigma^n(\t ) \prec (a_1 \ldots a_m)^\infty$ for all $n \ge 0$. In particular, we have for each $n \ge 0$ that
\[ \t_N \lle \sigma^n(\t_N ) \prec (a_1 \ldots a_m)^\infty .\]
\end{lemma}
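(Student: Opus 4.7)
The plan is to reduce everything to one structural identity. Writing $A := a_1\ldots a_m$, $B := 0 a_2\ldots a_m$ and $C := a_1\ldots a_j$, the period of $\t_N$ is $BA^NC$ and each admissible block has the form $BA^kC$ with $k\ge N$. From Lemma \ref{lem:43}(ii) the Lyndon word equals $s_1\ldots s_m = a_m a_{m-1}\ldots a_1 = a_{j+1}\ldots a_m a_1\ldots a_j$, so in particular $a_{j+1}=a_m=0$ and $a_j=a_1=1$; from Lemma \ref{lem:43}(i), $a_2\ldots a_{m-1}$ is a palindrome, and combined with (ii) the word $C=a_1\ldots a_j$ is a palindrome as well. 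The key fact I will use throughout is that $B$ and $s_1\ldots s_m$ are length-$m$ words that agree at positions $1,\ldots,m-1$ and differ only at position $m$, where $B[m]=a_m=0$ and $s_m=a_1=1$.

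For $\t_N \prec \t_{N+1}$, the first $L_N := (N+1)m+j$ symbols of both sequences coincide with $BA^NC$ (for $\t_{N+1}$ this uses that $C$ is the length-$j$ prefix of the extra $A$ inside its period $BA^{N+1}C$). On the next $m$ symbols $\t_N$ emits $B$ while $\t_{N+1}$ emits $a_{j+1}\ldots a_m a_1\ldots a_j = s_1\ldots s_m$, and the key fact forces the first strict inequality at position $L_N+m$.

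For the main assertion, write $\t = U_1 U_2\ldots$ with $U_i = BA^{k_i}C$ and $k_i\ge N$, and consider $\si^n(\t)$ starting at position $r+1$ inside some $U_i$. For the upper bound $\si^n(\t) \prec (a_1\ldots a_m)^\f$: each $U_i$ differs from $A^{k_i+1}C$ (a genuine prefix of $(a_1\ldots a_m)^\f$) only at position $1$, so for $r\ge 1$ the length-$(|U_i|-r)$ prefix of $\si^n(\t)$ coincides with the corresponding prefix of $\si^{r\bmod m}((a_1\ldots a_m)^\f)$; past this prefix $\si^n(\t)$ enters the $B$ of $U_{i+1}$ while $\si^{r\bmod m}((a_1\ldots a_m)^\f)$ presents the aligned block $s_1\ldots s_m$, and the key fact yields $\si^n(\t) \prec \si^{r\bmod m}((a_1\ldots a_m)^\f) \lle (a_1\ldots a_m)^\f$, the last inequality holding since $(a_1\ldots a_m)^\f \in \mathcal Q$. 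The case $r=0$ is settled immediately at position $1$. For the lower bound $\t_N \lle \si^n(\t)$: when $r=0$ the two sequences agree on $BA^N$; if all $k_i=N$ we get $\si^n(\t)=\t_N$, and otherwise, at the first $k_{i+p}>N$ the roles reverse so that $\t_N$ presents $B$ while $\si^n(\t)$ presents $s_1\ldots s_m$, and the key fact gives strict $\succ$ at position $m$ past the common $BA^NC$. When $r\ge 1$, either $a_{(r\bmod m)+1}=1$ and the inequality holds at position $1$, or this symbol is $0$ and a short position-by-position comparison using the two palindromes locates a $1$-over-$0$ discrepancy within the first $m$ compared positions. The final \emph{in particular} clause is the special case $k_i\equiv N$.

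The main obstacle I expect is this last case, $r\ge 1$ with $a_{(r\bmod m)+1}=0$, where $\si^n(\t)$ and $\t_N$ both begin with $0$ and one must rule out a later $0$-over-$1$ discrepancy. This reduces to a short case analysis on which of the $0$'s of $a_1\ldots a_m$ is encountered; in each sub-case the palindromes from Lemma \ref{lem:43} synchronise $\si^n(\t)$ with $\t_N$ up to position $m$, and the fact that $a_1=1$ (so that the first transition in the sequence $a_1,a_2,\ldots$ is necessarily a decrease $1\to 0$) together with the key fact delivers the strict inequality in the correct direction.
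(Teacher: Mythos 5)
Your skeleton is the same as the paper's: everything is driven by the identity $0a_2\ldots a_m=(a_{j+1}\ldots a_m a_1\ldots a_j)^-=(s_1\ldots s_m)^-$ (your ``key fact'', the paper's (5.1)), and the comparisons are made block by block. The monotonicity $\t_N\prec\t_{N+1}$, the upper bound $\si^n(\t)\prec(a_1\ldots a_m)^\f$ (your version, comparing $\si^n(\t)$ with $\si^{r\bmod m}((a_1\ldots a_m)^\f)$ and then using $\mathcal Q$-membership, is in fact a slightly more uniform packaging of the paper's case split at $n=\ell m$), and the $r=0$ part of the lower bound are all correct and complete.

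The gap is exactly where you flag it: the lower bound for $r\ge 1$ when $a_{(r\bmod m)+1}=0$. You assert that ``palindromes synchronise $\si^n(\t)$ with $\t_N$ up to position $m$'' and that the fact that the first transition in $a_1,a_2,\ldots$ is a decrease closes the case, but this is not an argument: both sequences now start with $0$, and nothing in the palindrome bookkeeping by itself rules out a later $0$-over-$1$ discrepancy. The missing idea --- which is what the paper uses --- is the minimality of the Lyndon word among cyclic permutations. Concretely: for every $r\ge1$ the first $m$ symbols of $\si^n(\t)$ are precisely the cyclic permutation $a_{i+1}\ldots a_m a_1\ldots a_i$ with $i=r\bmod m$; this needs a small verification across block boundaries, using that $C$ is a prefix of $A$ and that $CB=a_1\ldots a_m(a_1\ldots a_j)^-$ and $B=a_{j+1}\ldots a_m(a_1\ldots a_j)^-$, so the $i$ symbols following the current aligned copy of $a_{i+1}\ldots a_m$ are always $a_1\ldots a_i$. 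Since $s_1\ldots s_m$ is the lexicographically smallest cyclic permutation of $a_1\ldots a_m$, one gets
\[
a_{i+1}\ldots a_m a_1\ldots a_i\;\lge\; s_1\ldots s_m\;\succ\; s_1\ldots s_m^- \;=\;0a_2\ldots a_m,
\]
where the second inequality is decided at position $m$ at the latest; hence the first $m$ symbols of $\si^n(\t)$ strictly dominate the first $m$ symbols of $\t_N$ as words, and $\si^n(\t)\succ\t_N$ follows in one line, covering your case $a_{(r\bmod m)+1}=0$ (and subsuming the easy case $a_{(r\bmod m)+1}=1$). Replace the palindrome sketch by this observation and your proof is complete.
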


\begin{proof} 
 By Lemma \ref{lem:43} it follows that 
  \begin{equation}\label{eq:51}
  s_1\ldots s_m=a_ma_{m-1}\ldots a_1=0a_2\ldots a_m^+=a_{j+1}\ldots a_m a_1\ldots a_j.
  \end{equation}
This implies that for all $N \ge 1$,
  \begin{align*}
  \t_N&=(0a_2\ldots a_m(a_1\ldots a_m)^N a_1\ldots a_j) (a_{j+1}\ldots a_m a_1\ldots a_j^-(a_1\ldots a_m)^Na_1\ldots a_j)^\f\\
  &\prec (0a_2\ldots a_m(a_1\ldots a_m)^{N+1}a_1\ldots a_j)^\f=\;\t_{N+1},
  \end{align*}
giving the first part of the statement. For the second statement, let $\t$ be a sequence consisting of a concatenation of blocks of the form $0a_2\ldots a_m(a_1\ldots a_m)^k a_1\ldots a_j$ with prefix $0a_2\ldots a_m(a_1\ldots a_m)^K a_1\ldots a_j$ for some $K \ge N$. We first show that $\sigma^n(\t) \prec (a_1 \ldots a_m)^{\infty}$ for all $n \ge 0$. For $n=0$ the statement is clear. By Lemma~\ref{lem:310} it follows that $a_{i+1} \ldots a_m \prec a_1 \ldots a_{m-i}$ for each $0 < i < m$. This implies that $\sigma^n(\t) \prec (a_1 \ldots a_m)^{\infty}$ for each $\ell m < n < (\ell+1)m$, $0 \le \ell \le K$. For all other values of $n < (K+1)m+j$ we obtain the result from \eqref{eq:51}, which implies that
\[ a_1\ldots a_j 0 a_2\ldots a_m= a_1 \ldots a_m a_1\ldots a_j^- \prec a_1\ldots a_m a_1\ldots a_j.\]
The same arguments then give the result for any $n \ge 0$. Hence, $\sigma^n(\t) \prec (a_1 \ldots a_m)^{\infty}$ for all $n \ge 0$. We now show that $\sigma^n(\t) \lge \t_N$ for each $n \ge 0$. Note that $\t$ has prefix
\[
s_1\ldots s_m^-(a_1\ldots a_m)^K a_1\ldots a_j.
\]
For $n =0$ the statement follows from \eqref{eq:51}. By (\ref{eq:51}), Lemmas \ref{lem:34} and  \ref{lem:310} it follows that   
\begin{equation*}
s_{i+1}\ldots s_m^-  \lge s_1\ldots s_{m-i}\quad\textrm{and}\quad
a_1\ldots a_i  \succ a_{m-i+1}\ldots a_m=s_{m-i+1}\ldots s_m^-
\end{equation*}
for all $0<i<m$, giving the statement for all $0 < n <m$. Since $s_1\ldots s_m$ is the Lyndon word associated to $a_1\ldots a_m$, we obtain
\begin{equation*}
a_{i+1}\ldots a_m a_1\ldots a_i\lge s_1\ldots s_m\succ s_1\ldots s_m^- \quad\textrm{for any}\quad 0\le i<m.
\end{equation*}
Since $a_1\ldots a_j s_1\ldots s_{m-j}=a_1\ldots a_m$, the conclusion that $\si^n(\t)\lge \t_N$ for all $n\ge 0$ follows.
 \end{proof}
 
\begin{proposition}\label{l:taugammaL}
Let $\overline{a_1 \ldots a_m}\in\mathcal F$ and let $\beta \in (1,2)$ be such that $\alpha(\beta) = (a_1 \ldots a_m)^{\infty}$. Then $1-\frac1\beta \in E_\beta^0$ and
\[ \tau_\beta = 1-\frac1\beta = \max \overline{E_\beta^+}.\]
\end{proposition}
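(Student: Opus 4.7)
The plan is to establish three things in sequence: (i) $1-\tfrac{1}{\beta}\in E_\beta^0$; (ii) $\tau_\beta = 1-\tfrac{1}{\beta}$; and (iii) $\max \overline{E_\beta^+} = 1-\tfrac{1}{\beta}$. The key structural observation that drives the whole argument is that the greedy $\beta$-expansion of $1-\tfrac{1}{\beta}$ is precisely the Lyndon word $s_1\ldots s_m$ for $a_1\ldots a_m$ followed by $0^\infty$.

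For (i), the identity $\alpha(\beta)=(a_1\ldots a_m)^\infty$ yields $b(1,\beta)=a_1\ldots a_m^+ 0^\infty$. Since $\beta(1-\tfrac{1}{\beta})=\beta-1$ and $1-\tfrac{1}{\beta}<\tfrac{1}{\beta}$ for $\beta<2$, one computes $b(1-\tfrac{1}{\beta},\beta)=0\,a_2\ldots a_{m-1}\,1\,0^\infty$, and Lemma~\ref{lem:43}(i) identifies the length-$m$ prefix as $a_m a_{m-1}\ldots a_1 = s_1\ldots s_m$. The Lyndon property of $s_1\ldots s_m$ then gives $\sigma^n\bigl(b(1-\tfrac{1}{\beta},\beta)\bigr)\succ b(1-\tfrac{1}{\beta},\beta)$ for $0<n<m$, while $\sigma^m = 0^\infty$; since the shifts are automatically $\prec\alpha(\beta)$ (being shifts of a greedy expansion), this places $b(1-\tfrac{1}{\beta},\beta)$ in $\mathcal E_\beta^0$.

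For (ii), by the continuity and monotonicity of $\eta_\beta$ from Proposition~\ref{p:Raith}, it suffices to prove $\dim_H K_\beta(1-\tfrac{1}{\beta})=0$ together with $\dim_H K_\beta(t)>0$ for every $t<1-\tfrac{1}{\beta}$. For the zero dimension, any $(x_i)\in\mathcal K_\beta^+(1-\tfrac{1}{\beta})$ satisfies $s_1\ldots s_m 0^\infty \lle \sigma^n((x_i)) \lle (a_1\ldots a_m)^\infty$ for all $n\ge 0$, so $(x_i)\in\mathcal Z_m$; by Proposition~\ref{prop:44} the set $\mathcal Z_m$ is finite and every element eventually equals $(a_1\ldots a_m)^\infty=\alpha(\beta)$, which forces $\mathcal K_\beta^+(1-\tfrac{1}{\beta})=\emptyset$. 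Combined with countability of $K_\beta^0(1-\tfrac{1}{\beta})$ this gives $\dim_H K_\beta(1-\tfrac{1}{\beta})=0$. For the positive dimension, compare $\t_N$ from Lemma~\ref{lem:53} with $b(1-\tfrac{1}{\beta},\beta)$: they agree on the first $m-1$ positions but differ at position $m$, where $\t_N$ has $a_m=0$ against $s_m=1$, so $\t_N\prec b(1-\tfrac{1}{\beta},\beta)$ and continuity of $\pi_\beta$ gives $\pi_\beta(\t_N)\nearrow 1-\tfrac{1}{\beta}$. Given $t<1-\tfrac{1}{\beta}$, choose $N_0$ with $b(t,\beta)\prec \t_{N_0}$ and let $Y_{N_0}$ be the collection of all infinite concatenations of the two blocks $u_N=0\,a_2\ldots a_m(a_1\ldots a_m)^N a_1\ldots a_j$ for $N\in\{N_0,N_0+1\}$. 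By Lemma~\ref{lem:53}, $Y_{N_0}\subseteq\mathcal K_\beta^+(t)$. The two-block structure yields a subshift of finite type with topological entropy $\log\lambda$, where $\lambda>1$ is the positive root of $\lambda^{-\ell_{N_0}}+\lambda^{-\ell_{N_0+1}}=1$ (with $\ell_N$ the length of $u_N$). The dimension formula~(\ref{q:dimensionformula}) then yields $\dim_H K_\beta^+(t)>0$.

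For (iii), Lemma~\ref{lem:53} places each $\t_N$ in $\mathcal E_\beta^+$, so $\pi_\beta(\t_N)\in E_\beta^+$, and the convergence $\pi_\beta(\t_N)\nearrow 1-\tfrac{1}{\beta}$ gives $1-\tfrac{1}{\beta}\in\overline{E_\beta^+}$. For the upper bound, any $t\in E_\beta^+\cap(1-\tfrac{1}{\beta},1)$ would produce $b(t,\beta)\in\mathcal Z_m$ by the same inclusion argument as in (ii), so by Proposition~\ref{prop:44} some $\sigma^N(b(t,\beta))=\alpha(\beta)$, contradicting $b(t,\beta)\in\Sigma_\beta$. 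Hence $E_\beta^+\subseteq[0,1-\tfrac{1}{\beta})$, so $\overline{E_\beta^+}\subseteq[0,1-\tfrac{1}{\beta}]$ and the maximum identity follows. The main technical obstacle is controlling the subshift in (ii)—ensuring it both sits inside $\mathcal K_\beta^+(t)$ and has positive entropy—but Lemma~\ref{lem:53} is designed precisely to secure the inclusion, and the standard two-block renewal count handles the entropy.
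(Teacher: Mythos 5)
Your proposal is correct and follows essentially the same route as the paper's proof: the same identification of $b(1-\frac1\beta,\beta)$ with the Lyndon word $s_1\ldots s_m$ followed by $0^\infty$ (giving membership in $E_\beta^0$), the same use of the sequences $\t_N$ from Lemma~\ref{lem:53} for both lower bounds, and the same appeal to Proposition~\ref{prop:44} to show $\mathcal K_\beta^+(1-\frac1\beta)=\emptyset$ for both upper bounds. The only cosmetic differences are that you compute the greedy expansion of $1-\frac1\beta$ dynamically rather than by subtraction plus an admissibility check, and that you make the positive-entropy step explicit via a two-block renewal count where the paper simply asserts it.
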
 
 
 \begin{proof}
Since $m$ is the minimal period of $\al(\beta)$, the greedy $\beta$-expansion of 1 is equal to $b(1, \beta) = a_1 \ldots a_m^+0^\f$. Lemma~\ref{lem:43} tells us that $a_1 \ldots a_m^+ = 1a_{m-1}\ldots a_1$, so
  \[
 \pi_\beta(a_ma_{m-1}\ldots a_10^\f)=\pi_\beta(1a_{m-1}\ldots a_10^\f)-\frac{1}{\beta}=\pi_\beta(a_1\ldots a_m^+0^\f)-\frac{1}{\beta}=1-\frac{1}{\beta}.\]
Recall that $a_m a_{m-1}\ldots a_1 =0a_2 \ldots a_m^+$. Then by Lemma~\ref{lem:310} it follows that for each $n\ge 0$, $\si^n(a_ma_{m-1}\ldots a_1 0^\f)\prec (a_1\ldots a_m)^\f=\al(\beta)$ and hence  $a_ma_{m-1}\ldots a_10^\f$ is the greedy $\beta$-expansion of $1-\frac1{\beta}$, i.e., $ b \big(1-\frac{1}{\beta}, \beta\big) =a_m a_{m-1}\ldots a_10^\f$. By Lemma~\ref{lem:34}, $b(1-\frac{1}{\beta}, \beta )\in\E_\beta^0$, so $1-\frac{1}{\beta}\in E_\beta^0$.

\vskip .2cm
The quasi-greedy $\beta$-expansion of $1-\frac{1}{\beta}$ is given by 
\[
\tilde b \Big(1-\frac{1}{\beta}, \beta \Big)=0a_2\ldots a_m(a_1\ldots a_m)^\f.
\]
Now consider the sequences $\t_N$ from Lemma~\ref{lem:53}. Since $\t_N \lle \sigma^n(\t_N ) \prec (a_1 \ldots a_m)^\infty = \alpha(\beta)$ for all $n \ge 0$, we have $\t_N \in \mathcal E_\beta^+$ for each $N \ge 1$. Moreover, if we set $t_N:=\pi_\beta(\t_N)$, then Lemma \ref{lem:22} gives that $t_N\nearrow 1-\frac{1}{\beta}$ as $N\ra\f$. So, $\max\overline{E_\beta^+}\ge 1-\frac{1}{\beta}$. Furthermore, the fact that any sequence of concatenations of blocks of the form $0a_2\ldots a_m(a_1\ldots a_m)^k a_1\ldots a_j$, $k\ge N$, belongs to $\mathcal K_\beta^+(t_N)$ implies that $h_{top}(\K_\beta^+(t_N))>0$ for all $N\ge 1$ and hence also $h_{top}(\K_\beta(t_N))>0$ for all $N\ge 1$. By the dimension formula \eqref{q:dimensionformula} we then get that $\tau_\beta \ge 1-\frac1{\beta}$.

\vskip .2cm
On the other hand, by Lemma~\ref{lem:43}(ii) and Proposition \ref{prop:44} we have
\begin{equation}\label{eq:58}
 \mathcal K_\beta^+ \Big(1-\frac{1}{\beta}\Big)=\set{(x_i): a_ma_{m-1}\ldots a_1 0^\f\lle\si^n((x_i))\prec (a_1\ldots a_m)^\f ~\forall n\ge 0} = \emptyset.
 \end{equation}
Since $E_\beta^+\cap[1-\frac{1}{\beta}, 1]\subseteq K_\beta^+(1-\frac{1}{\beta})$, this implies that $\max\overline{E_\beta^+} \le 1-\frac{1}{\beta}$. It also implies that $\dim_H K_\beta(1-\frac1\beta)=0$, which gives that $\tau_\beta \le 1-\frac1{\beta}$ and proves the result.
  \end{proof}

\begin{remark}\label{r:pos+}
{\rm Note that the previous lemma also implies that for any $t < 1-\frac1{\gamma_L}$ we have $h_{top}(\K_{\gamma_L}^+(t))>0$. We will use this later on.
}\end{remark}
 
Next we will give a lower and upper bound for $\tau_\beta$ on each Farey interval $(\ga_L, \ga_R]$. 
\begin{lemma}\label{lem:55}
Let $(a_1\ldots a_m)^\infty \in \mathcal Q$ with $\overline{a_1 \ldots a_m}\in\mathcal F$.  For each $\beta \in (\ga_L,\ga_R]$, set $ t^* = \pi_\beta(0a_2\ldots a_m (a_1\ldots a_m)^\f) $ and $t^\diamond = \pi_\beta(0a_2\ldots a_m^+ 0^\f)$. Then $t^* \in E_\beta^+$, $t^\diamond \in E_\beta^0$ and
\[ 1-\frac1{\beta} - \frac1{\beta^m} + \frac1{\beta(\beta^m -1)} \le t^* \le  \tau_\beta \le t^\diamond < 1-\frac{1}{\beta}.\]
\end{lemma}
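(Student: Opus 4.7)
The plan is to prove the five claims in order: $t^*\in E_\beta^+$, $t^\diamond\in E_\beta^0$, $t^*\le\tau_\beta$, $\tau_\beta\le t^\diamond$, and the outer inequalities. For the first, write $(t^*_i):=0a_2\ldots a_m(a_1\ldots a_m)^\f$; this is the pointwise limit of the sequences $\t_N$ from Lemma~\ref{lem:53}, which satisfy $\t_N\lle\si^n(\t_N)\prec(a_1\ldots a_m)^\f$ for all $N,n$. Passing to the limit and using $(a_1\ldots a_m)^\f\prec\al(\beta)$ (which holds because $\beta>\ga_L$) gives $(t^*_i)\in\E_\beta^+$. For $t^\diamond\in E_\beta^0$, Lemma~\ref{lem:43}(ii) identifies $0a_2\ldots a_m^+$ with the Lyndon word $s_1\ldots s_m$ associated to $a_1\ldots a_m$, so $b(t^\diamond,\beta)=s_1\ldots s_m 0^\f$; the Lyndon property yields $\si^k(s_1\ldots s_m 0^\f)\succ s_1\ldots s_m 0^\f$ for $0<k<m$, while the Parry condition follows because every shift of $b(t^\diamond,\beta)$ either begins with $0$ (hence $\prec\al(\beta)$) or is dominated by the corresponding cyclic shift of $(s_1\ldots s_m)^\f\lle(a_1\ldots a_m)^\f\prec\al(\beta)$.

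For $t^*\le\tau_\beta$, set $t_N:=\pi_\beta(\t_N)\in E_\beta^+$; then $t_N\nearrow t^*$. Lemma~\ref{lem:53} ensures that every concatenation of blocks $0a_2\ldots a_m(a_1\ldots a_m)^ka_1\ldots a_j$ with $k\in\{N,N+1\}$ lies in $\K_\beta^+(t_N)$, so $h_{top}(\K_\beta^+(t_N))>0$; by~\eqref{q:dimensionformula} this forces $\dim_H K_\beta(t_N)>0$ and thus $t_N<\tau_\beta$. Continuity of $\eta_\beta$ then yields $t^*\le\tau_\beta$. For $\tau_\beta\le t^\diamond$ it suffices to prove $\K_\beta^+(t^\diamond)\subseteq\mathcal Z_m$, since Proposition~\ref{prop:44} then makes $K_\beta(t^\diamond)$ countable. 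The lower bound $b(t^\diamond,\beta)=s_1\ldots s_m0^\f$ is fixed in $\beta$, while the upper bound $\al(\beta)\lle\al(\ga_R)=a_1\ldots a_m^+(s_1\ldots s_m)^\f$ only relaxes as $\beta$ increases, so it is enough to handle $\beta=\ga_R$. Suppose some $(x_i)\in\K_{\ga_R}^+(t^\diamond)$ satisfies $\si^n((x_i))\succ(a_1\ldots a_m)^\f$. Either (Case~A) $x_{n+1}\ldots x_{n+m}=a_1\ldots a_m$, whereupon the same analysis applies to $\si^{n+m}((x_i))$; iterating forces $\si^n((x_i))=(a_1\ldots a_m)^\f$, contradicting strict inequality. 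Or (Case~B) $x_{n+1}\ldots x_{n+m}=a_1\ldots a_m^+$, which gives $s_1\ldots s_m 0^\f\lle\si^{n+m}((x_i))\prec(s_1\ldots s_m)^\f$; iterating the block-forcing argument on $s_1\ldots s_m$ drives $\si^{n+m}((x_i))=(s_1\ldots s_m)^\f$, hence $\si^n((x_i))=\al(\ga_R)$, which is incompatible with $\al(\ga_R)\notin\Sigma_{\ga_R}$. Both cases being impossible, $\si^n((x_i))\lle(a_1\ldots a_m)^\f$ for every $n$, so $(x_i)\in\mathcal Z_m$.

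Finally, set $p:=\pi_\beta((a_1\ldots a_m)^\f)$; direct summation gives $t^*=p-\tfrac1\beta$ and $t^\diamond=p-\tfrac1\beta+\tfrac{1-p}{\beta^m}$. Because $\beta>\ga_L$ strictly decreases the sum $\pi_\beta((a_1\ldots a_m)^\f)$ below its value $1$ at $\ga_L$, we have $p<1$, so $t^\diamond-(1-\tfrac1\beta)=(1-p)(\tfrac1{\beta^m}-1)<0$, proving $t^\diamond<1-\tfrac1\beta$. For the lower bound on $t^*$, monotonicity of $\pi_\beta$ in the base (applied to the non-negative sequence $\al(\ga_R)$ with $\beta\le\ga_R$) gives $1=\pi_\beta(\al(\beta))\le\pi_\beta(\al(\ga_R))=p\bigl(1-\tfrac1{\beta^m}\bigr)+\tfrac1{\beta^m}+\tfrac{P_s}{\beta^m}$, where $P_s:=\pi_\beta((s_1\ldots s_m)^\f)$. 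Rearranging yields $p\ge 1-\tfrac{P_s}{\beta^m-1}$; eliminating $P_s$ via the reversal identity $P_s=p-\tfrac{\beta^{m-1}-1}{\beta^m-1}$ (which comes from $s_i=a_{m+1-i}$ combined with the palindrome $a_2\ldots a_{m-1}=a_{m-1}\ldots a_2$ from Lemma~\ref{lem:43}(i)) and substituting into $t^*=p-\tfrac1\beta$ produces the asserted inequality after elementary algebra. The main obstacle is the case analysis for $\tau_\beta\le t^\diamond$: Case~B is ruled out only by exploiting the strict inequality defining $\Sigma_{\ga_R}$ and the precise structure of $\al(\ga_R)$, while the lower-bound derivation requires the nontrivial reversal identity for $P_s$.
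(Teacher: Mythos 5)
Your treatment of $t^*\in E_\beta^+$, $t^\diamond\in E_\beta^0$, $t^*\le\tau_\beta$, $\tau_\beta\le t^\diamond$ and $t^\diamond<1-\frac{1}{\beta}$ is correct and essentially the paper's argument; for $\tau_\beta\le t^\diamond$ you re-derive by hand (via the two-case block-forcing analysis at $\beta=\ga_R$) what the paper obtains in one step from Lemma~\ref{lem:39}, before invoking Proposition~\ref{prop:44} exactly as the paper does.

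The genuine gap is the very last step. Your (correct) chain $1\le\pi_\beta(\al(\ga_R))=p\bigl(1-\beta^{-m}\bigr)+\beta^{-m}+P_s\beta^{-m}$, combined with the (correct) reversal identity $P_s=p-\frac{\beta^{m-1}-1}{\beta^m-1}$, yields after the algebra you defer $p\ge 1-\frac{1}{\beta^m}+\frac{\beta^{m-1}-1}{\beta^m(\beta^m-1)}$, hence $t^*=p-\frac{1}{\beta}\ge 1-\frac{1}{\beta}-\frac{1}{\beta^m}+\frac{\beta^{m-1}-1}{\beta^m(\beta^m-1)}$. This is strictly weaker than the asserted bound, whose last term is $\frac{1}{\beta(\beta^m-1)}=\frac{\beta^{m-1}}{\beta^m(\beta^m-1)}$; the two differ by $\frac{1}{\beta^m(\beta^m-1)}$, so the claimed ``elementary algebra'' does not close the step. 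In fact no argument can: for $m=2$ with $a_1a_2=10$ one computes $t^*=\pi_\beta(00(10)^\f)=\frac{1}{\beta(\beta^2-1)}$, and the asserted inequality reduces to $1-\frac{1}{\beta}-\frac{1}{\beta^2}\le 0$, i.e.\ $\beta\le\ga_L$, which fails throughout $(\ga_L,\ga_R]$. The paper's own derivation of this inequality rests on the displayed identity $\pi_{\ga_R}(a_1\ldots a_m^+(0a_2\ldots a_m)^\f)=1$, which is false as written (the correct identity is $\pi_{\ga_R}(a_1\ldots a_m^+(0a_2\ldots a_m^+)^\f)=1$, and correcting it gives precisely your weaker bound). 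So your computation is the right one; the flaw is only in asserting that it delivers the stated constant, which it cannot, since the stated lower bound is itself erroneous and should read $1-\frac{1}{\beta}-\frac{1}{\beta^m}+\frac{\beta^{m-1}-1}{\beta^m(\beta^m-1)}\le t^*$.
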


\begin{proof}
Take $\beta\in (\ga_L, \ga_R]$. Then
\begin{equation*}
(a_1\ldots a_m)^\f\prec \al(\beta)\lle a_1\ldots a_m^+(a_m a_{m-1}\ldots a_1)^\f.
\end{equation*}
We first show that $ \tau_\beta \ge t^*$. By Lemmas~\ref{lem:43} and \ref{lem:310}, we have
\[
\sigma^n(0a_2 \ldots a_m (a_1 \ldots a_m)^\f)  \lle (a_1 \ldots a_m)^\infty \prec \alpha(\beta) \quad \forall \, n \ge 0.
\]
Hence, $b(t^*,\beta)=0a_2\ldots a_m (a_1 \ldots a_m)^\f$ and as in the proof of Lemma~\ref{lem:53} we have that $\sigma^n(b(t^*,\beta)) \lge b(t^*,\beta)$ for each $n \ge 0$. So $t^* \in E_\beta^+$.

\vskip .2cm
For each $t < t^*$ we have by Lemma \ref{lem:22} that $b(t, \beta)\prec 0a_2\ldots a_m (a_1 \ldots a_m)^\f$. This implies  that for $N$ large enough, $b(t, \beta ) \prec \t_N \prec (a_1 \ldots a_m)^\infty \prec \alpha(\beta)$. By Lemma~\ref{lem:53}, it follows that $t_N \in \mathcal K_\beta^+(t)$ and $h_{top} (\mathcal K_\beta(t)) \ge h_{top} (\mathcal K_\beta^+(t))>0$. Thus $\dim_H \mathcal K_\beta(t) > 0$ and $\tau_\beta \ge t^*$.

\vskip .2cm
On the other hand, for $t^\diamond$ we have that $0a_2\ldots a_m^+ 0^\f$ is admissible for any $\beta \in (\gamma_L, \gamma_R]$ and that $\sigma^n(0a_2\ldots a_m^+ 0^\f) \succ 0a_2\ldots a_m^+ 0^\f$ for all $0 < n < m$, so $t^\diamond \in E_\beta^0$. By Lemmas \ref{lem:43} and \ref{lem:39} we get
\begin{equation}\label{q:t*in} \begin{split}
\mathcal K_\beta^+(t^\diamond) & \subseteq\set{(x_i) : a_m a_{m-1} \ldots a_1 0^\f\lle \si^n((x_i)) \prec a_1\ldots a_{m}^+(a_ma_{m-1}\ldots a_1)^\f ~\forall n\ge 0 }\\
&=\set{(x_i): (a_m a_{m-1}\ldots a_1 )^\f\lle \si^n((x_i))\prec a_1\ldots a_{m}^+(a_ma_{m-1}\ldots a_1)^\f  ~\forall n\ge 0 }\\
&=\set{(x_i): (a_m a_{m-1}\ldots a_1 )^\f\lle \si^n((x_i))\lle (a_1\ldots a_m)^\f  ~\forall n\ge 0 }\\
&=\set{(x_i): a_m a_{m-1}\ldots a_1 0^\f\lle \si^n((x_i))\lle (a_1\ldots a_m)^\f  ~\forall n\ge 0 }.
\end{split}\end{equation}
By Proposition \ref{prop:44} it follows that $\# \K_\beta^+(t^\diamond) < \infty$, so that $\dim_H K_\beta(t^\diamond)=0$. This gives that $\tau_\beta \le t^\diamond$. Note that
\[ \pi_{\gamma_R} (a_1a_2\ldots a_m^+ (0a_2\ldots a_m)^\f)=1.\]
Then, we have for each $\beta \in (\gamma_L, \gamma_R]$, that
\[ t^* = \pi_\beta (0a_2\ldots a_m (a_1a_2\ldots a_m)^\f) >1 -\frac1{\beta}-\frac1{\beta^m} + \sum_{i = 1}^\infty \frac1{\beta^{im+1}} = 1-\frac1{\beta} -\frac1{\beta^m} +\frac1{\beta(\beta^m-1)}. \]
From Proposition~\ref{l:taugammaL} we know that $\pi_{\gamma_L} (0a_2 \ldots a_m^+ 0^\f) =1-\frac{1}{\ga_L}$. For $\beta > \gamma_L$ we have $a_1\ldots a_m^+ 0^\f\prec b(1, \beta)$, so that
\[ t^\diamond = \pi_\beta (0a_2\ldots a_m^+0^\f)  =\pi_\beta(a_1\ldots a_m^+0^\f)-\pi_\beta(10^\f)<1-\frac{1}{\beta}. \qedhere\]
\end{proof}

\noindent Below in Figure~\ref{fig:4} we see a plot of the lower and upper bounds for $\tau_\beta$ found in Lemma~\ref{lem:55}.
\begin{figure}[h!]\centering
\includegraphics[width=87mm]{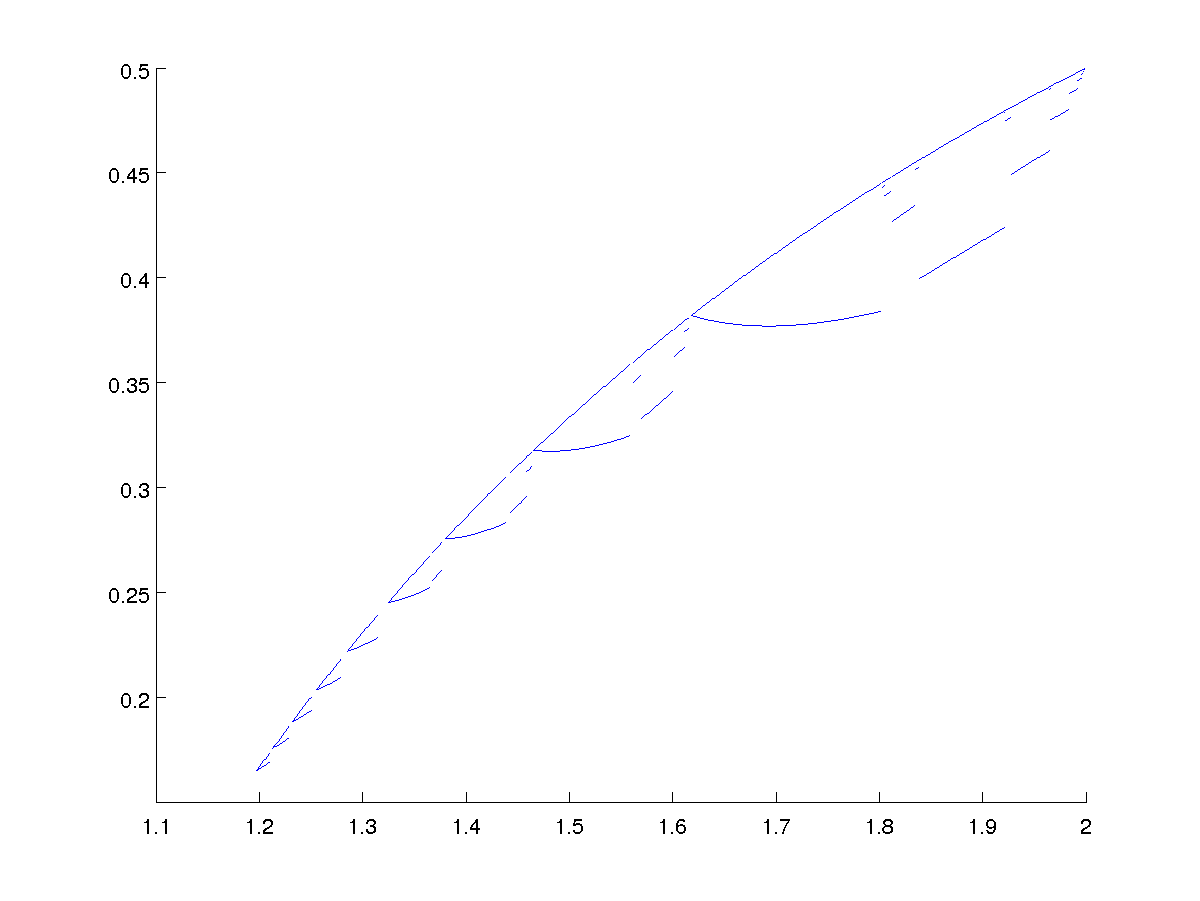}
 \caption{A plot of $1-\frac1{\beta}$ and $1-\frac1{\beta} -\frac1{\beta^m} +\frac1{\beta(\beta^m-1)}$ for basic intervals corresponding to Farey words of length $m$ with $m \le 10$.}\label{fig:4}
\end{figure}

The next lemma considers the critical point $\tau_\beta$ for the remaining values of $\beta$, i.e., those that are not in the closure of a Farey interval.
\begin{lemma}\label{l:criticalnotinfarey}
\label{lem:56}
Let $\beta\in(1,2)\setminus\bigcup[\ga_L, \ga_R]$ with the union taken over all Farey intervals.  Then $\max \overline{E_\beta^+}=\tau_\beta=1-\frac{1}{\beta}$.
\end{lemma}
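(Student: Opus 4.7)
I first show $\alpha(\beta)$ is aperiodic: if $\alpha(\beta)=(a_1\ldots a_m)^\infty$ with minimal period $m$, then Proposition~\ref{prop:312} makes $\beta$ the left endpoint of a basic interval, which by Proposition~\ref{prop:313} is either itself a Farey interval or strictly contained in one, placing $\beta\in\bigcup[\gamma_L,\gamma_R]$ and contradicting the hypothesis. Hence $b(1,\beta)=\alpha(\beta)$ and $b(1-\frac{1}{\beta},\beta)=0\sigma(\alpha(\beta))$. Next, since the Farey intervals are pairwise disjoint (remark after Lemma~\ref{lem:49}) and their union has full Lebesgue measure in $(1,2)$ by Theorem~\ref{main:3}, a standard gap argument shows that both Farey left and right endpoints accumulate at $\beta$ from both sides: otherwise a one-sided ``missing'' interval adjacent to $\beta$ would have positive Lebesgue measure yet be disjoint from every Farey interval, a contradiction. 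Fix Farey-interval left endpoints $\gamma_L^{(k)}\nearrow\beta$ and $\gamma_L^{(j)}\searrow\beta$, with $\alpha(\gamma_L^{(k)})=(a_1\ldots a_{m_k})^\infty$ and $\alpha(\gamma_L^{(j)})=(b_1\ldots b_{n_j})^\infty$, where $\overline{a_1\ldots a_{m_k}},\overline{b_1\ldots b_{n_j}}\in\mathcal F$.

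\textbf{Lower bound.} Since $\gamma_L^{(k)}<\beta$ implies $\alpha(\gamma_L^{(k)})\prec\alpha(\beta)$, the sequences $\mathbf t_N^{(k)}\in\mathcal E_{\gamma_L^{(k)}}^+$ constructed in Lemma~\ref{lem:53} automatically satisfy admissibility for base $\beta$, so $\pi_\beta(\mathbf t_N^{(k)})\in E_\beta^+$. By Remark~\ref{r:pos+}, the associated concatenation subshift has positive topological entropy and is contained in $\mathcal K_\beta^+(t)$ for every $t<\pi_\beta(\mathbf t_N^{(k)})$, whence $\tau_\beta\ge 1-\frac{1}{\gamma_L^{(k)}}$. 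Letting first $N\to\infty$ and then $k\to\infty$, and using Lemma~\ref{lem:22} to pass from symbolic convergence to convergence of $\pi_\beta$-values, yields $\tau_\beta\ge 1-\frac{1}{\beta}$ and produces a sequence in $E_\beta^+$ accumulating at $1-\frac{1}{\beta}$ from below.

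\textbf{Upper bound.} It suffices to prove $\mathcal K_\beta^+(1-\frac{1}{\beta})=\emptyset$: combined with countability of $K_\beta^0$ this gives $\dim_H K_\beta(1-\frac{1}{\beta})=0$ and hence $\tau_\beta\le 1-\frac{1}{\beta}$, while $E_\beta^+\cap[1-\frac{1}{\beta},1)\subseteq K_\beta^+(1-\frac{1}{\beta})=\emptyset$ gives $\max\overline{E_\beta^+}\le 1-\frac{1}{\beta}$. Suppose for contradiction that some $(x_i)$ satisfies $0\sigma(\alpha(\beta))\lle\sigma^n((x_i))\prec\alpha(\beta)$ for every $n\ge 0$. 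Partitioning the interval $[0\sigma(\alpha(\beta)),\alpha(\beta))$ by the first symbol yields the self-coding rule $x_{n+1}=0\Leftrightarrow\sigma^{n+1}((x_i))\lge\sigma(\alpha(\beta))$, which forces $(x_i)$ to be determined by any of its tails. For each approximant $\gamma_L^{(j)}\searrow\beta$, admissibility upgrades to $\sigma^n((x_i))\lle(b_1\ldots b_{n_j})^\infty$; combining this with the Farey palindrome identity $s_1\ldots s_{n_j}=0b_2\ldots b_{n_j}^+$ of Lemma~\ref{lem:43}(i) and the self-coding rule, a Proposition~\ref{prop:44}-style rigidity forces $(x_i)$ to match the finite set $\mathcal Z_{n_j}$ on arbitrarily long windows. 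Passing to the limit $j\to\infty$ would then force $(x_i)$ to coincide with a shift of $\alpha(\beta)$, which violates the strict admissibility $\sigma^n((x_i))\prec\alpha(\beta)$; the desired contradiction.

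\textbf{Main obstacle.} The essential technical difficulty is that the lex lower bound $b(1-\frac{1}{\beta},\beta)=0\sigma(\alpha(\beta))$ is strictly smaller than the bound $s_1\ldots s_{n_j}0^\infty$ used in Proposition~\ref{prop:44}: the two sequences first disagree at position $n_j$, where $\alpha(\beta)$ carries digit $0$ (matching the Farey-periodic approximation) while $s_1\ldots s_{n_j}$ carries $1$. Consequently Proposition~\ref{prop:44} cannot be invoked off the shelf to bound $\mathcal K_\beta^+(1-\frac{1}{\beta})$, and the limiting argument above requires carefully propagating this one-position mismatch through the self-coding rule as $j\to\infty$.
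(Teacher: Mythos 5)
Your lower bound follows the paper's route in spirit, but one intermediate claim is off: the concatenation subshift built from the $\gamma_L^{(k)}$-blocks is contained in $\mathcal K_\beta^+(t)$ only for $t<\pi_\beta(\mathbf{t}_N^{(k)})$, and $\sup_N\pi_\beta(\mathbf{t}_N^{(k)})=\pi_\beta\big(0\sigma(\alpha(\gamma_L^{(k)}))\big)$ is \emph{strictly less} than $1-\frac1{\gamma_L^{(k)}}$ because you project in base $\beta>\gamma_L^{(k)}$; so you do not get $\tau_\beta\ge 1-\frac1{\gamma_L^{(k)}}$ directly. To conclude you would still need $\pi_\beta\big(0\sigma(\alpha(\gamma_L^{(k)}))\big)\to 1-\frac1\beta$, i.e.\ coordinatewise convergence $\alpha(\gamma_L^{(k)})\to\alpha(\beta)$ — true, but requiring an argument you do not give. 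The paper avoids this entirely with a change-of-base trick: for fixed $t<1-\frac1\beta$ it reads the sequence $b(t,\beta)$ as a $\gamma_{L,M}$-expansion of $t_1=\sum_i b_i(t,\beta)\gamma_{L,M}^{-i}$, chooses $M$ so large that $t_1<1-\frac1{\gamma_{L,M}}$, and obtains $\mathcal K_\beta^+(t)\supseteq\mathcal K^+_{\gamma_{L,M}}(t_1)$, which has positive entropy by Remark~\ref{r:pos+}.

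The upper bound is where the genuine gap lies, and you concede it yourself in your ``main obstacle'' paragraph. Your plan is to show $\mathcal K_\beta^+(1-\frac1\beta)=\emptyset$ directly, but the step ``a Proposition~\ref{prop:44}-style rigidity forces $(x_i)$ to match $\mathcal Z_{n_j}$ on arbitrarily long windows'' is exactly the assertion that needs proof, and Proposition~\ref{prop:44} does not apply because the lower constraint $0\sigma(\alpha(\beta))$ sits strictly below $s_1\ldots s_{n_j}0^\infty$; the subsequent ``passing to the limit'' is not an argument. The paper never works at the critical value: it proves $\mathcal K_\beta^+(t)=\emptyset$ for every $t>1-\frac1\beta$, which suffices for both $\tau_\beta\le 1-\frac1\beta$ and $\max\overline{E_\beta^+}\le 1-\frac1\beta$. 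The device is again a change of base, now from above: take Farey left endpoints $\gamma_{L,N}\searrow\beta$ and $N$ so large that $t_2:=\sum_i b_i(t,\beta)\gamma_{L,N}^{-i}$ satisfies $1-\frac1{\gamma_{L,N}}<t_2<t$. Since $\gamma_{L,N}>\beta$, the sequence $b(t,\beta)$ is the greedy $\gamma_{L,N}$-expansion of $t_2$ and $\alpha(\beta)\prec\alpha(\gamma_{L,N})$, whence
\[
\mathcal K_\beta^+(t)\subseteq\mathcal K^+_{\gamma_{L,N}}(t_2)\subseteq\mathcal K^+_{\gamma_{L,N}}\Big(1-\frac1{\gamma_{L,N}}\Big)=\emptyset,
\]
the last equality being \eqref{eq:58} — the one place where Proposition~\ref{prop:44} is legitimately invoked, at a periodic endpoint where its hypotheses hold. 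This inclusion argument replaces the self-coding/limiting analysis you were attempting and dissolves the one-position mismatch you identified. As written, your proof of the upper bound is incomplete.
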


\begin{proof}
Take $\beta\in(1,2)\setminus\bigcup[\ga_L, \ga_R]$. First we show that $\tau_\beta\ge 1-\frac{1}{\beta}$. Let $t<1-\frac{1}{\beta}$ with $b (t, \beta)=(b_i(t, \beta))$. Since $\dim_H \big( (1,2)\setminus\bigcup[\ga_L, \ga_R] \big)=0$, there exists a sequence of Farey intervals $([\ga_{L, k}, \ga_{R,k}])$, such that $\ga_{L, k}\nearrow \beta$ as $k\ra\f$. Thus, as $k\rightarrow \infty $ we have
  \begin{equation}\label{q:gammaLtobeta}
 \sum _{i=1}^\infty \frac{b_i (t, \beta)}{(\ga_{L, k})^i }\searrow \sum _{i=1}^\infty \frac{b_i (t, \beta)}{\beta ^i}=t
 \quad  \textrm{and}\quad 1-\frac{1}{\ga_{L, k}}\nearrow 1-\frac{1}{\beta }.
 \end{equation}
  For each $k$, we have a sequence $( \t_{k,N} ) \subseteq \mathcal E_{\gamma_{L,k}}^+$ as given in \eqref{q:tN}. Since $\gamma_{L,k} < \beta$, we obtain for each $N,n \ge 1$ that
\[ \t_{k,N} \lle \sigma^n (\t_{k,N}) \prec \alpha(\gamma_{L,k}) \prec \alpha(\beta).\]
Hence, $\t_{k,N} \in \mathcal E_\beta^+$ for all $k \ge 1$ and $N \ge 1$. This gives that $\max \overline{E_\beta^+} \ge 1-\frac1{\beta}$. Moreover, since $t<1-\frac{1}{\beta}$, we can find by \eqref{q:gammaLtobeta} a sufficiently large $M\in \mathbb N$, such that
 \begin{equation*}
  t< t_1:= \sum _{i=1}^\infty \frac{b_i (t, \beta)}{(\ga_{L, M})^i }<1-\frac{1}{\ga_{L, M}}< 1-\frac{1}{\beta }.
 \end{equation*}
Observe that  $b(t, \beta)=(b_i(t, \beta))$ is a $\ga_{L, M}$-expansion of $t_1$, which is lexicographically less than or equal to its greedy expansion $b(t_1,\ga_{L, M})$.   Then,
\begin{equation}\label{eq:512} 
\begin{split}
\mathcal K_\beta^+(t)&=\set{(x_i): b(t, \beta)\lle \si^n((x_i))\prec \al(\beta)~\forall n\ge 0}\\
&\supseteq\set{(x_i): b(t_1, \ga_{L, M})\lle \si^n((x_i))\prec \al(\ga_{L, M})~\forall n\ge 0}= \mathcal{K}_{\ga_{L,M}}^+(t_1).
\end{split}
\end{equation}
Since $\tau_{\ga_{L,M}}=1-\frac{1}{\ga_{L,M}} >t_1$, by Remark~\ref{r:pos+} we know that $h_{top}(\K_{\ga_{L,M}}^+(t_1))>0$ and together with \eqref{eq:512} we then find $h_{top}(\mathcal K_\beta^+(t))>0$, which in turn implies $\tau_\beta\ge t$. Since $t<1-\frac{1}{\beta}$ was taken arbitrarily, we conclude that $\tau_\beta \ge 1-\frac{1}{\beta}$.

\vskip .1cm 
To prove the other inequality we show that for any $t>1-\frac{1}{\beta}$ we have $\K_\beta^+(t)=\emptyset$. Take $t>1-\frac{1}{\beta}$. There is a sequence of Farey  intervals  $([\ga_{L,k}, \ga_{R,k}])$, such that $\ga_{L,k}\searrow \beta$ as $k\ra\f$. Thus, when $k\rightarrow \infty $ we have
\[
 \sum _{i=1}^\infty \frac{b_i (t, \beta)}{(\ga_{L,k})^i }\nearrow \sum _{i=1}^\infty \frac{b_i (t, \beta)}{\beta ^i}=t
 \quad \textrm{and}\quad 1-\frac{1}{\ga_{L,k}}\searrow 1-\frac{1}{\beta }.
\]
Since $t>1-\frac{1}{\beta}$, we can find a sufficiently large $N\in \mathbb N$ such that
 \begin{equation*}
 1-\frac{1}{\beta }< 1-\frac{1}{\ga_{L,N}}<t_2:=\sum_{i=1}^\f\frac{b_i (t, \beta)}{(\ga_{L,N})^i }<  t.
 \end{equation*}
Since $\ga_{L,N}>\beta$, $b(t, \beta)$ is the greedy $\ga_{L,N}$-expansion of $t_2$, i.e., $b(t, \beta)=b(t_2, \ga_{L,N})$.  Therefore, 
\[
\mathcal K_\beta^+(t) \subseteq\set{(x_i): b(t_2,\ga_{L,N})\lle \si^n((x_i))\prec \al(\ga_{L,N})~\forall n\ge 0}= \mathcal{K}^+_{\ga_{L,N}}(t_2) \subseteq \mathcal{K}_{\ga_{L,N}}^+(\tau_{\gamma_{L,N}}).
\]
From \eqref{eq:58} we conclude that $\K_\beta^+(t)=\emptyset$ and hence, $\max \overline{E^+_\beta}$, $\tau_\beta \le t$. Since $t>1-\frac{1}{\beta}$ was taken arbitrarily, we have $\max\overline{E_\beta^+}=\tau_\beta= 1-\frac{1}{\beta}$.
\end{proof}
 
\begin{proof}[Proof of Theorem~\ref{main:4}]
From Proposition~\ref{l:taugammaL}, Lemma~\ref{lem:55} and Lemma~\ref{l:criticalnotinfarey} we know that for all $\beta \in (1,2)$ we have $\tau_\beta \le 1-\frac1{\beta}$ with equality only if $\beta\in (1,2)\setminus\bigcup(\ga_L, \ga_R]$. We also know that for these points $\tau_\beta = \max \overline{E^+_\beta}$.

\vskip .2cm
By Proposition~\ref{prop:36} we know that any isolated point of $E^+_\beta$ has a periodic greedy $\beta$-expansion $b(t,\beta)$. From Proposition~\ref{prop:312} it follows that any $t \in (0,1)$ for which $b(t, \beta)=(s_1\ldots s_m)^\infty$ is periodic, is isolated in $E_\beta^+$ if and only if $\beta$ lies in the Farey interval generated by the largest cyclic permutation of $s_1 \ldots s_m$. So, if $\beta \not \in \bigcup(\ga_L, \ga_R]$, then $E_\beta^+$ cannot contain an isolated point and $\overline{E^+_\beta}$ is a Cantor set.
\end{proof}

\section{Final observations and remarks}

With the results from Theorems~\ref{main:2} and \ref{main:3} we have shown that the situation for $\beta \in (1,2)$ differs drastically from the situation for $\beta=2$, that was previously investigated in \cite{Urb86,Nil09,CT}. There are still several unanswered questions.

\vskip .2cm
Firstly, the structure of $E_\beta^0$ remains illusive to us. We know that $t \in E_\beta^0$ is isolated in $E_\beta$ if $\beta -1 \not \in K_\beta(t)$ and in Proposition~\ref{p:htopinE+} we proved that $h_{top}(\K_\beta(t)) = h_{top}(\K_\beta^+(t))$ for any $t \in E_\beta^+$. It would be interesting to know whether $t \in E_\beta^0$ is isolated in $E_\beta$ in case $\beta -1 \in K_\beta(t)$ and to consider $h_{top}(\K_\beta^0(t))$, also in case $t \not \in E_\beta^+$.

\vskip .2cm
In the previous section we have investigated the value of the critical point $\tau_\beta$ of the dimension function $\eta_\beta: t \mapsto \dim_H K_\beta(t)$. We could determine this value for any $\beta$ in the set $(1,2)\setminus\bigcup(\ga_L, \ga_R]$. If $\beta \in (\gamma_L, \gamma_R]$ for some Farey interval $(\gamma_L, \gamma_R]$, we only have a lower and upper bound for $\tau_\beta$. With a calculation very similar to the one in \eqref{q:t*in} one can show that for any $\beta \in (\gamma_L, \gamma_R]$ that satisfies
\[ \alpha (\beta) \prec a_1\ldots a_m^+ (0a_2 \ldots a_m)(a_1\ldots a_m)^{\infty},\]
we have $\tau_\beta = t^*$. However, for larger values of $\beta \in (\gamma_L, \gamma_R]$ the situation seems more intricate. It would be interesting to consider this question further by specifying $\tau_\beta$ more precisely also on $\bigcup (\gamma_L, \gamma_R]$ and by analysing the behaviour of the function $\tau: \beta \mapsto \tau_\beta$.

\section*{Acknowledgments}
The first author was partially supported by the NWO Veni-grant 639.031.140. The second author was supported by  NSFC No.~11401516. The fourth author was supported by NSFC No.~11671147, 11571144 and Science and Technology Commission of Shanghai Municipality (STCSM)  No.~13dz2260400.

 \bibliographystyle{alpha}
\bibliography{holes}
\end{document}